\documentclass[12pt,reqno]{amsart}
\usepackage[left=1in, right=1in, top=1in]{geometry}
\usepackage{amsfonts}
\usepackage{comment}
\usepackage{cases}
\usepackage{latexsym}
\usepackage[all]{xy}
\usepackage{stmaryrd}
\usepackage{amsmath,amssymb,amscd,bbm,amsthm,mathrsfs,dsfont}
\usepackage{tikz}
\usepackage{extarrows}
\usepackage{pgflibraryarrows}
\usepackage{pgflibrarysnakes}
\usepackage{pgfplots}
\usepackage[all]{xy}
\usepackage[numbers,sort&compress]{natbib}
\usepackage{hypernat}
\usepackage{color, hyperref}

\definecolor{blue}{rgb}{1.0,0.0,0.9}
\hypersetup{colorlinks, breaklinks,
            linkcolor=blue,urlcolor=blue,
            anchorcolor=blue, citecolor=blue}

\usepackage{fancyhdr}
\usepackage{amsxtra,ifthen}
\usepackage{verbatim}

\numberwithin{equation}{section}

\theoremstyle{plain}
\newtheorem{theorem}{Theorem}[section]

\newtheorem{proposition}[theorem]{Proposition}

\newtheorem{corollary}[theorem]{Corollary}

\theoremstyle{definition}

\allowdisplaybreaks[4]

\makeatletter
\@namedef{subjclassname@2020}{\textup{2020}  Mathematics Subject Classification}
\makeatother
\begin{document}


\title[The behaviour of moving points on curves: A  rotating frame approach]
{The behaviour of moving points on curves: A  rotating frame approach}

\author{Dong Han }

\address{Han: School of Mathematics and Information Science, Henan Polytechnic University, Jiaozuo 454000, P. R. China}

\email{lishe@hpu.edu.cn}

\begin{abstract}In this paper, we construct rotating frames for curves, including plane curves, space curves and curves on surfaces. Hence, the behaviour of an arbitrary moving point on a curve can be seen as the composite of linear motion and rotation.  Conversely, it can also be proved that a curve can  be determined by the two motions of a moving point on it, namely, linear motion and rotation. Thus, we obtain a new binary mathematical formation mechanism for curves based on  the aforementioned two motions. Finally, we apply this rotating frame method to the study of the behaviour of moving points on ellipses.

 \end{abstract}

\subjclass[2020]{53A04, 53A05, 53A17}

\keywords{Curves, rotating frames, linear motion, rotation}

\thanks{The work is supported by National Natural Science Foundation of China (No.11926415).}


\maketitle


\section{Introduction}

A fixed point $O$ in $n$-dimensional affine space, together with the set of ordered basis vectors  $\overrightarrow{e}_1$, $\overrightarrow{e}_2$, $\cdots$, $\overrightarrow{e}_n$, is called a frame, which is a fundamental concept in mathematics. Different frame systems can be used to reveal distinct  properties of an object from varying perspectives. Consequently, it is a defining feature of frame theory to select appropriate frame systems based on specific problems.

There  has been a long history to study curves with different frames in Euclidean, Minkowski, or other special spaces (\cite{{Bishop}},   \cite{{Liang}}-\cite{Li2}, \cite{ZhaoGao} etc.). For curves, the main frame is the Frenet frame in differential
geometry (\cite{Kuhnel}, \cite{LiYuanZhang}-\cite{Tapp} etc.).  The Frenet frame of a space curve consists of three unit orthogonal vectors, namely, unit tangent, principal normal, and binormal vectors.   Using the Frenet frame, we can observe the structure of a curve near a point clearly  and achieve shape-based classification of regular curves. Precisely because the Frenet frame serves as a pivotal tool,  differential geometry of curves has become the most classical curve theory to date. Although the Frenet frame  is a powerful analytical tool for curves, it cannot be defined at the points where the second-order derivatives of the curve (univariate vector-valued function) are equal to  zero vectors. This leads to the popularization of orthonormal frame systems defined along a curve. Among them, the most representative is the Bishop frame (\cite{{Bishop}}).

This paper aims to present a new orthonormal frame system for curves, namely the rotating frame. The main characteristic of this frame system is that it enables us to observe two different types of motion of a moving point on a curve, that is, linear motion and rotation.
On this basis, we can  discuss the differential properties of the  two motions just mentioned.  We also show that a curve can be determined in reverse by the linear motion and rotation of its moving point. It can thus be stated  that  linear motion and rotation constitute a binary formation mechanism for curves. Moreover, compared to some traditional frames in differential geometry,  this  rotating frame has the following differences:

(i) It applies to both plane and space curves. It is comparatively more proficient in handling general plane curves.

(ii) It can be placed both on and off a curve. If placed off a curve, then  it will be called a general rotating frame of the curve, and the  external (or global) differential properties of the linear motion and rotation of the moving points on the curve can be obtained (see Proposition  \ref{xxsec2.1}, \ref{xxsec2.5}, \ref{xxsec3.1} etc.); while if placed on a curve, then it will be called a local rotating frame of the curve,  and the internal (or local)  differential properties of the two motions can be obtained (see Proposition  \ref{xxsec2.6}, \ref{xxsec3.4}, \ref{xxsec3.5}  etc.). In addition,  local rotating frames enable the classification of curves based on their shape (see Theorem \ref{xxsec2.7}, \ref{xxsec3.6} etc.).

The outline of this article is as follows.  $\S$  2 is devoted to  plane curves, where plane curves
with general rotating frames   are introduced in $\S$ 2.1-2.3,  plane curves
with local rotating frames   in $\S$ 2.4,  and straight lines with rotating frames in  $\S$ 2.5.
Space curves are considered in $\S$ 3, including space curves  with general rotating frames in $\S$ 3.1, space curves  with local rotating frames in $\S$ 3.2, and two space curves with symmetrical rotating frames  in $\S$ 3.3.  $\S$ 4 is to extend the results of $\S$ 3 to the curves on surfaces. We conclude the paper with $\S$  5 by giving an application of  rotating frames to the study of the behaviour of moving points on  ellipses. In Appendix A and B, we present some complicated and tedious computational processes which are fairly necessary for the completeness of our paper.

\section{The behaviour of moving points on plane curves }

\subsection{Plane curves with general rotating frames 1}
A simple curve of class $C^2$ in $\mathbb{R}^2$ can usually be given by the following vector function:
\begin{equation}\label{xxsecpla1}
  \overrightarrow{r} (t)=\{x(t),y(t)\}, \ \ \ t_0 < t< t_1.
\end{equation}
Set up a rotating frame at the origin $O$ of $\mathbb{R}^2$ as follows:
\begin{equation}\label{xxsecpla2}
 \{O;\overrightarrow{e}_1(t), \overrightarrow{e}_2(t) \}, \ \ \ t_0 < t< t_1,
\end{equation}
where $\overrightarrow{e}_1(t)= \frac{\{x(t),y(t)\}}{\sqrt{x^2(t)+y^2(t)}}$, $\overrightarrow{e}_2(t)=\frac{\{-y(t),x(t)\}}{\sqrt{x^2(t)+y^2(t)}}$. Frame (\ref{xxsecpla2})  is a general rotating frame of (\ref{xxsecpla1}).  If the coordinates of (\ref{xxsecpla2}) are  $\xi$, $\eta$,
then the equation of (\ref{xxsecpla1}) under (\ref{xxsecpla2}) can be written in the form \begin{align}
  \xi(t)= \sqrt{x^2(t)+y^2(t) },\   \eta(t)=0, \ \ \ t_0 < t< t_1.\label{xxsecpla3}
\end{align}

\textbf{Note:}  In this subsection, when we speak of $t$, we mean, unless otherwise stated, $t_0 <t< t_1$, and  (\ref{xxsecpla1}) does not pass through  $O(0,0)$. Additionally, the capital letter $O$  represents  both the origin of $\mathbb{R}^2$\,(Section 2, 5) and $\mathbb{R}^3$\,(Section 3, 4).

With the help of (\ref{xxsecpla2}), one can observe that there are two kinds of motions included in the behaviour of the moving points on (\ref{xxsecpla1}), namely linear motion and rotation. This fact can be described in detail by the following proposition.
\begin{proposition}\label{xxsec2.1}  Let $P$ be a moving point on (\ref{xxsecpla1}), and let its parameter be $t$.
\begin{enumerate}
  \item[$\mathrm{(i)}$]  For $\mathcal{D}=|\overrightarrow{OP}|$, we have
  \begin{align}\frac{ \mathrm{d}\mathcal{D} }{\mathrm{d}t}=\frac{x(t) x'(t) +y(t) y'(t) }{\sqrt{x^2(t) +y^2(t)}}.\notag
\end{align}
\item[$\mathrm{(ii)}$] The rotational speed of $\overrightarrow{OP}$ with respect to $t$
 is $\frac{|x(t)y'(t)-x'(t)y(t)|}{x^2(t)+y^2(t)}$.
\end{enumerate}
\end{proposition}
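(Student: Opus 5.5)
Part (i) follows from a direct differentiation. Since the curve (\ref{xxsecpla1}) does not pass through $O$, the function $\mathcal{D}(t)=\sqrt{x^2(t)+y^2(t)}$ is positive and of class $C^2$ on $t_0<t<t_1$. By (\ref{xxsecpla3}), $\mathcal{D}$ is precisely the coordinate $\xi(t)$ of $P$ in the rotating frame (\ref{xxsecpla2}). Applying the chain rule to $\mathcal{D}^2=x^2+y^2$ gives $2\mathcal{D}\mathcal{D}'=2(xx'+yy')$. Dividing by $2\mathcal{D}\neq 0$ yields the formula in (i). I would note that this quantity is the component of $\overrightarrow{r}\,'(t)$ along $\overrightarrow{e}_1(t)$, so it is the speed of the linear motion of $P$ along the ray $OP$.

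For (ii), I would define the rotational speed as the speed of the unit vector $\overrightarrow{e}_1(t)$ along the unit circle. Equivalently, it is $|\theta'(t)|$, where $\theta(t)$ is a continuous polar angle of $\overrightarrow{OP}$. I would compute $\overrightarrow{e}_1\,'(t)$ directly. Writing $\overrightarrow{e}_1=\overrightarrow{r}/\mathcal{D}$, we get
\begin{equation*}
\overrightarrow{e}_1\,'=\frac{\overrightarrow{r}\,'}{\mathcal{D}}-\frac{\mathcal{D}'}{\mathcal{D}^2}\overrightarrow{r}.
\end{equation*}
Since $\overrightarrow{e}_1$ has unit length, its derivative is orthogonal to it, so $\overrightarrow{e}_1\,'=\lambda(t)\overrightarrow{e}_2(t)$. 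The coefficient is obtained by dotting with $\overrightarrow{e}_2=\{-y,x\}/\mathcal{D}$. The $\overrightarrow{r}$ term drops out because $\overrightarrow{r}\cdot\overrightarrow{e}_2=0$, which leaves
\begin{equation*}
\lambda=\frac{\overrightarrow{r}\,'\cdot\{-y,x\}}{\mathcal{D}^2}=\frac{xy'-x'y}{x^2+y^2}.
\end{equation*}
The rotational speed is therefore $|\overrightarrow{e}_1\,'|=|\lambda|$, which is the claimed expression.

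As a cross-check, I would also use a continuous angle function. Because $\overrightarrow{r}$ avoids $O$, there is a $C^1$ function $\theta$ with $\overrightarrow{e}_1=\{\cos\theta,\sin\theta\}$. Then $\overrightarrow{e}_1\,'=\theta'\overrightarrow{e}_2$, so $\lambda=\theta'$. Differentiating $\tan\theta=y/x$ locally, or $\cot\theta=x/y$ where $x=0$, gives the same formula.

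The only delicate point is making ``rotational speed'' precise. The angle itself is not a globally defined single-valued function, and its existence needs the lifting argument, which holds because $\overrightarrow{e}_1$ maps the interval into the unit circle. I would therefore define the rotational speed as $|\overrightarrow{e}_1\,'(t)|$, which needs no lifting, and note that the sign of $\lambda$ records the direction of rotation.
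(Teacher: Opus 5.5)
Your proposal is correct and follows the paper's route: (i) is the direct differentiation of $\mathcal{D}=\sqrt{x^2+y^2}$, and (ii) computes $\frac{\mathrm{d}\overrightarrow{e}_{\overrightarrow{OP}}}{\mathrm{d}t}$ and takes its length, landing on exactly the paper's expression (\ref{xxsecpla4}), $\frac{(xy'-x'y)\{-y,x\}}{(x^2+y^2)^{3/2}}$. The only difference is cosmetic. You evaluate that derivative by writing it as $\lambda\overrightarrow{e}_2$ and dotting with $\overrightarrow{e}_2$, which is a tidier shortcut than the paper's component-by-component expansion.
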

\begin{proof} (i) Note that $\mathcal{D}=\sqrt{x^2(t) +y^2(t)}$, which equals to $\xi(t)$ in (\ref{xxsecpla3}); hence, the conclusion holds.

(ii) First we have $$\overrightarrow{e}_{\overrightarrow{OP}} = \frac{\{x(t) ,y(t) \}}{\sqrt{x^2(t) +y^2(t) }}.$$
Thus
\begin{align}\frac{\mathrm{d}\overrightarrow{e}_{\overrightarrow{OP}} }{\mathrm{d}t}
 =&-\frac{1}{2}(x^2 +y^2)^{-\frac{3}{2}}\times2(x x' +y y' )\{x ,y \}+(x^2 +y^2)^{-\frac{1}{2}}\{x' , y' \}\notag\\
 =&-(x^2 +y^2)^{-\frac{3}{2}}  (x x' +y y' )\{x ,y \}\notag\\&+(x^2 +y^2)^{-\frac{3}{2}}(x^2 +y^2)\{x' , y' \}\notag\\
 =&-(x^2 +y^2)^{-\frac{3}{2}}\{x^2 x' + y y'x ,x x'y +y^2 y' \}\notag\\
  &+(x^2 +y^2)^{-\frac{3}{2}}\{x^2 x' +y^2 x' , x^2 y' +y^2 y' \}\notag\\
   =&(x^2 +y^2)^{-\frac{3}{2}}\{ - y y' x+y^2 x' ,  -x x'y +x^2 y'  \}\notag\\
  =&(x^2 +y^2)^{-\frac{3}{2}} \{y (-y'x +y x'),  x (-x'y +x y' )\}\notag\\
    =&(x^2 +y^2)^{-\frac{3}{2}} \{y (-x y' +x'y),  x (-x'y +x y' )\}\notag\\
 =&\frac{(x y' -x'y )\{ -y ,  x  \}}{(x^2 +y^2)^{\frac{3}{2}}},\label{xxsecpla4}\end{align}
$$|\frac{\mathrm{d}\overrightarrow{e}_{\overrightarrow{OP}} }{\mathrm{d}t}|= \frac{|x y' -x'y |}{x^2 +y^2 }.$$

\end{proof}

\textbf{Note:} In the proof process of the above proposition, we omit the parameter $t$ and write $x(t)$\,(resp. $y(t)$) as $x$\,(resp. $y$) for the sake of simplicity. In the following text, we will adopt similar omissions without further explanation.

We have already demonstrated that the moving point $P$ of (\ref{xxsecpla1}) satisfies Proposition \ref{xxsec2.1}(i) and (\ref{xxsecpla4}) at the same time. Next we will prove that  (\ref{xxsecpla1}) can also be determined completely by Proposition \ref{xxsec2.1}(i) and (\ref{xxsecpla4}) provided the initial values are given.
\begin{theorem}\label{xxsec2.2} Let $O$ be the origin of $\mathbb{R}^2$. Suppose that $P$ is a moving point in $\mathbb{R}^2$ satisfying the following conditions:
\begin{enumerate}
  \item[$\mathrm{1)}$]  for $\mathcal{D}=|\overrightarrow{OP}|$, we have
  \begin{align}
 &\frac{ \mathrm{d}\mathcal{D}}{\mathrm{d}t}=\frac{x(t)x'(t)+y(t)y'(t)}{ \sqrt{x^2(t)+y^2(t)} }, \mathcal{D}|_{t=t_0}=\sqrt{x^2(t_0)+y^2(t_0)}; \notag
\end{align}
\item[$\mathrm{2)}$] $\frac{\mathrm{d}\overrightarrow{e}_{\overrightarrow{OP}}}{\mathrm{d}t}
 =\frac{(x(t)y'(t)-x'(t)y(t))\{ -y(t),  x(t) \}}{ (x^2(t)+y^2(t))^{\frac{3}{2}}}$, $\overrightarrow{e}_{\overrightarrow{OP}}|_{t=t_0}= \frac{\{x(t_0),y(t_0)\}}{\sqrt{x^2(t_0)+y^2(t_0)}}$.
\end{enumerate}
Here  $t_0 \leq t\leq t_1$. Then the trajectory of $P$ is  $$\overrightarrow{r} (t)=\{x(t),y(t)\},\ \ \ t_0 \leq t\leq t_1.$$

\end{theorem}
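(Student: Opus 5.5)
The plan is to integrate the two differential conditions separately, using uniqueness for initial value problems, and then recombine them through $\overrightarrow{OP}=\mathcal{D}\,\overrightarrow{e}_{\overrightarrow{OP}}$. Throughout, as in the standing Note of this subsection, the curve $\{x(t),y(t)\}$ is $C^2$ and avoids $O$. Hence $x^2+y^2>0$ on $[t_0,t_1]$, and the right-hand sides in 1) and 2) are continuous functions of $t$ alone.

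\emph{Step 1: the distance.} The right-hand side of 1) does not involve $\mathcal{D}$, and it equals the derivative of $\sqrt{x^2(t)+y^2(t)}$, as computed in Proposition \ref{xxsec2.1}(i). Integrating from $t_0$ and using the initial value gives
\begin{align}
\mathcal{D}(t)=\sqrt{x^2(t_0)+y^2(t_0)}+\int_{t_0}^{t}\frac{xx'+yy'}{\sqrt{x^2+y^2}}\,\mathrm{d}s=\sqrt{x^2(t)+y^2(t)}.\notag
\end{align}

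\emph{Step 2: the direction.} Condition 2) prescribes $\frac{\mathrm{d}}{\mathrm{d}t}\overrightarrow{e}_{\overrightarrow{OP}}$ as a known continuous vector function of $t$. By the computation (\ref{xxsecpla4}) in Proposition \ref{xxsec2.1}(ii), this function is exactly the derivative of $\overrightarrow{u}(t)=\frac{\{x(t),y(t)\}}{\sqrt{x^2(t)+y^2(t)}}$. So $\overrightarrow{e}_{\overrightarrow{OP}}-\overrightarrow{u}$ has zero derivative on the interval. It also vanishes at $t_0$ by the initial condition, so it vanishes identically. Hence $\overrightarrow{e}_{\overrightarrow{OP}}(t)=\overrightarrow{u}(t)$, which confirms in particular that the prescribed direction really is a unit vector for all $t$.

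\emph{Step 3: recombination.} Since $\mathcal{D}>0$, we have $\overrightarrow{OP}=\mathcal{D}\,\overrightarrow{e}_{\overrightarrow{OP}}$. Substituting Steps 1 and 2 gives $\overrightarrow{OP}(t)=\sqrt{x^2+y^2}\cdot\frac{\{x,y\}}{\sqrt{x^2+y^2}}=\{x(t),y(t)\}$ for $t_0\le t\le t_1$. This is the asserted trajectory.

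The only point needing care is the interpretation of the hypotheses rather than any calculation. One must read $\mathcal{D}$ and $\overrightarrow{e}_{\overrightarrow{OP}}$ as the unknown functions, with the right-hand sides given data. One must also keep the standing assumption that $\{x,y\}$ stays away from $O$, so that $\mathcal{D}$ never vanishes and the unit direction is well defined. With that granted, each step is a one-line application of the mean value theorem (uniqueness for $y'=f(t)$).
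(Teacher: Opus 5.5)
Your proposal is correct and takes essentially the same route as the paper. The paper uses 2) to write $\overrightarrow{OP}=\lambda(t)\{x(t),y(t)\}$ with $\lambda>0$, and uses 1) to get $|\overrightarrow{OP}|^2=x^2+y^2$, which forces $\lambda=1$; this is your magnitude/direction recombination, and you simply spell out the integration-with-initial-value steps (via the mean value theorem) that the paper leaves implicit.
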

\begin{proof} Let $\overrightarrow{OP}(t)=\{f_1(t),f_2(t)\}$, $t_0 \leq t\leq t_1$. It follows from 2) that $f_1=\lambda(t)x(t)$, $f_2=\lambda(t)y(t)$, where $\lambda(t)>0$.
By 1) we have
 \begin{equation}
{f_1}^2(t)+{f_2}^2(t) = x^2(t)+y^2(t)  , \ \ \ t_0 \leq t\leq t_1.\notag
\end{equation}
Taking  $f_1=\lambda(t)x(t)$, $f_2=\lambda(t)y(t)$ into the above expression, we obtain
$$ \lambda^2(t)(x^2(t)+y^2(t) )= x^2(t)+y^2(t)   , \ \ \ t_0 \leq t\leq t_1.$$
So $\lambda(t)=\pm 1$, and we must have $\lambda(t)=1$. This completes the proof.
\end{proof}

Notice, from  Proposition \ref{xxsec2.1}(i), that
 \begin{align}
\frac{ \mathrm{d}^2\mathcal{D}}{\mathrm{d}t^2}
  =& -(x^2 +y^2  )^{-\frac{3}{2}}(x x' +y y' )^2\notag\\
  &+(x^2 +y^2  )^{-\frac{1}{2}}(x'^2 + x x'' +y'^2 +y y'' ) .\label{xxsecpla5}
\end{align}
So we can also write the above theorem into the following form.
\begin{theorem}\label{xxsec2.3} Let $O$ be the origin of $\mathbb{R}^2$. Suppose that $P$ is a moving point in $\mathbb{R}^2$ satisfying the following conditions:
\begin{enumerate}
  \item[$\mathrm{1)}$]  for $\mathcal{D}=|\overrightarrow{OP}|$, we have
  \begin{align}
&\frac{ \mathrm{d}^2\mathcal{D}}{\mathrm{d}t^2}
   =\frac{-(x(t)x'(t)+y(t)y'(t))^2+(x^2(t)+y^2(t) )(x'^2(t)+ x(t)x''(t)+y'^2(t)+y(t)y''(t)) }{(x^2(t)+y^2(t) )^{\frac{3}{2}}},  \notag\\
 &\frac{ \mathrm{d}\mathcal{D}}{\mathrm{d}t}|_{t=t_0}=\frac{x(t_0)x'(t_0)+y(t_0)y'(t_0)}{ \sqrt{x^2(t_0)+y^2(t_0)} }, \mathcal{D}|_{t=t_0}=\sqrt{x^2(t_0)+y^2(t_0)}; \notag
\end{align}
\item[$\mathrm{2)}$] $\frac{\mathrm{d}\overrightarrow{e}_{\overrightarrow{OP}}}{\mathrm{d}t}
 =\frac{(x(t)y'(t)-x'(t)y(t))\{ -y(t),  x(t) \}}{ (x^2(t)+y^2(t))^{\frac{3}{2}}}$, $\overrightarrow{e}_{\overrightarrow{OP}}|_{t=t_0}= \frac{\{x(t_0),y(t_0)\}}{\sqrt{x^2(t_0)+y^2(t_0)}}$.
\end{enumerate}
Here  $t_0 \leq t\leq t_1$. Then the trajectory of $P$ is  $$\overrightarrow{r} (t)=\{x(t),y(t)\},\ \ \ t_0 \leq t\leq t_1.$$

\end{theorem}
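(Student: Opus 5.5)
The plan is to reduce Theorem \ref{xxsec2.3} to Theorem \ref{xxsec2.2}. The only difference between the two statements is in condition 1): Theorem \ref{xxsec2.3} prescribes the second derivative of $\mathcal{D}$ together with the initial values of $\mathcal{D}$ and $\mathrm{d}\mathcal{D}/\mathrm{d}t$, whereas Theorem \ref{xxsec2.2} prescribes the first derivative of $\mathcal{D}$ and its initial value. Condition 2) is identical in both. So it suffices to show that condition 1) of Theorem \ref{xxsec2.3} implies condition 1) of Theorem \ref{xxsec2.2}.

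Set $g(t)=\sqrt{x^2(t)+y^2(t)}$, and write $h(t)=g'(t)=\frac{x(t)x'(t)+y(t)y'(t)}{\sqrt{x^2(t)+y^2(t)}}$. By the computation (\ref{xxsecpla5}), valid since the curve is of class $C^2$ and avoids $O$, we have $g''(t)$ equal to the right-hand side of the second-order equation in 1). Hence $u(t)=\mathcal{D}(t)-g(t)$ satisfies $u''=0$ on $[t_0,t_1]$ with $u(t_0)=0$ and $u'(t_0)=0$.

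Integrating twice gives $u'\equiv u'(t_0)=0$ and then $u\equiv u(t_0)=0$. Thus $\mathcal{D}=g$, and in particular $\frac{\mathrm{d}\mathcal{D}}{\mathrm{d}t}=h(t)$ with $\mathcal{D}|_{t=t_0}=g(t_0)$, which is exactly condition 1) of Theorem \ref{xxsec2.2}. Theorem \ref{xxsec2.2} then yields that the trajectory of $P$ is $\overrightarrow{r}(t)$. Equivalently, one can skip the intermediate step: since $\mathcal{D}=g$, condition 2) forces $\overrightarrow{OP}=\lambda(t)\{x,y\}$ with $\lambda>0$, and comparing lengths gives $\lambda\equiv1$.

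There is no real obstacle here. The only point needing care is a regularity assumption: $\mathcal{D}$ must be twice differentiable so that the uniqueness argument for $u''=0$ applies, and this is implicit in the hypothesis that $\mathrm{d}^2\mathcal{D}/\mathrm{d}t^2$ exists. One also uses, as in Theorem \ref{xxsec2.2}, that condition 2) with its initial value pins $\overrightarrow{e}_{\overrightarrow{OP}}$ to $\{x,y\}/g$. This holds by uniqueness for the linear ODE, since $\{x,y\}/g$ satisfies it by (\ref{xxsecpla4}).
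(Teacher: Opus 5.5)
Your proposal is correct and follows the paper's route. The paper only says the proof is ``completely similar'' to that of Theorem \ref{xxsec2.2}, where condition 2) gives $\overrightarrow{OP}=\lambda(t)\{x,y\}$ with $\lambda>0$ and the length condition forces $\lambda=1$; your integration of $u''=0$ with $u(t_0)=u'(t_0)=0$ makes explicit the step the paper leaves unstated, namely that the second-order condition 1) yields $\mathcal{D}=\sqrt{x^2+y^2}$.
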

\begin{proof} The proof is completely similar to that of Theorem \ref{xxsec2.2}.
\end{proof}

\subsection{Parameter transformation }  Let $t=g(h)\in C^2$  be a monotonic function, where  $h_0\leq h\leq h_1$. We might as well assume $g(h_0)=t_0$, $g(h_1)=t_1$.  Now (\ref{xxsecpla1}) can be rewritten as
\begin{equation}\label{xxsecpla6}
  \overrightarrow{r}(h) =\{x(g(h)), y(g(h))\},\ h_0< h<h_1.
\end{equation}
Correspondingly,  (\ref{xxsecpla2}) becomes
\begin{equation}\label{xxsecpla7}
 \{O; \overrightarrow{e}_1 (g(h)), \overrightarrow{e}_2 (g(h)) \}, \ \ \  h_0< h<h_1,
\end{equation}
where $\overrightarrow{e}_1(g(h))= \frac{\{x(g(h)),y(g(h))\}}{\sqrt{x^2(g(h))+y^2(g(h))}}$, $\overrightarrow{e}_2(g(h))=\frac{\{-y(g(h)),x(g(h))\}}{\sqrt{x^2(g(h))+y^2(g(h))}}$.
If the coordinates are still $\xi, \eta$, then the equation of (\ref{xxsecpla6}) under (\ref{xxsecpla7}) becomes
\begin{equation}\label{xxsecpla8}
  \xi(g(h))= \sqrt{x^2(g(h))+y^2(g(h)) },\  \eta(g(h))=0, \ \ \ h_0< h<h_1.
\end{equation}

\textbf{Note:} In this subsection, we default to  $h_0< h<h_1$, and  (\ref{xxsecpla7}) does not pass through $(0,0)$.

Similar to Proposition \ref{xxsec2.1}, we have
\begin{proposition}\label{xxsec2.4} Let $P$ be a moving point on (\ref{xxsecpla7}), and let its parameter be $h$.
\begin{enumerate}
  \item[$\mathrm{(i)}$]  For $\mathcal{D}=|\overrightarrow{OP}|$, we have
  \begin{align}\frac{ \mathrm{d}\mathcal{D} }{\mathrm{d}h}=\frac{[x(g(h)) x'(t)  +y(g(h))  y'(t)]g'(h)}{\sqrt{x^2(g(h)) +y^2(g(h))} }.\notag
\end{align}
  \item[$\mathrm{(ii)}$] The rotational speed of $\overrightarrow{OP}$ with respect to $h$ is $$\frac{g'(h)|x(g(h))y'(t) -x'(t) y(g(h)) |}{x^2(g(h)) +y^2(g(h)) }.$$
\end{enumerate}
\end{proposition}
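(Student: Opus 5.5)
The plan is to reduce everything to Proposition \ref{xxsec2.1} by the chain rule, treating $h\mapsto \overrightarrow{r}(g(h))$ as the composite of the curve (\ref{xxsecpla1}) with the reparametrization $t=g(h)$. Throughout, $x'(t)$ and $y'(t)$ denote derivatives with respect to $t$, evaluated at $t=g(h)$, which is how the statement is to be read.

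For (i), I note that by (\ref{xxsecpla8}) we have $\mathcal{D}(h)=\xi(g(h))=\sqrt{x^2(g(h))+y^2(g(h))}$. This is exactly the function $\mathcal{D}(t)$ of Proposition \ref{xxsec2.1}(i) composed with $g$. Since $g\in C^2$ and the curve avoids $O$, the composite is differentiable, and the chain rule gives $\frac{\mathrm{d}\mathcal{D}}{\mathrm{d}h}=\frac{\mathrm{d}\mathcal{D}}{\mathrm{d}t}\big|_{t=g(h)}\,g'(h)$. Substituting the formula from Proposition \ref{xxsec2.1}(i) yields the claimed expression at once.

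For (ii), I write $\overrightarrow{e}_{\overrightarrow{OP}}(h)=\overrightarrow{e}_1(g(h))$. Applying the chain rule to the vector identity (\ref{xxsecpla4}) gives
\[
\frac{\mathrm{d}\overrightarrow{e}_{\overrightarrow{OP}}}{\mathrm{d}h}=g'(h)\,\frac{(x y'-x'y)\{-y,x\}}{(x^2+y^2)^{\frac{3}{2}}},
\]
with $x,y$ evaluated at $g(h)$. Taking norms and using $|\{-y,x\}|=\sqrt{x^2+y^2}$, the rotational speed is $\frac{|g'(h)|\,|xy'-x'y|}{x^2+y^2}$.

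The only delicate point is the absolute value of $g'(h)$. The statement writes $g'(h)$ outside the modulus, which is correct when $g$ is increasing. The hypotheses $g(h_0)=t_0$ and $g(h_1)=t_1$ with $h_0<h_1$ and $t_0<t_1$, together with monotonicity, force $g$ to be nondecreasing, so $g'(h)\ge 0$ and $|g'(h)|=g'(h)$. I will make this remark explicitly to finish the proof. Apart from that, the argument is a routine chain-rule computation, and I expect no real obstacle.
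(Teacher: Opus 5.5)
Your proof is correct and is essentially the paper's argument: the paper also obtains (i) directly from $\mathcal{D}=\sqrt{x^2(g(h))+y^2(g(h))}$. For (ii) it re-runs the computation of (\ref{xxsecpla4}) with the factor $g'(h)$ carried along, which is exactly your chain-rule step. Your remark that the normalization $g(h_0)=t_0<t_1=g(h_1)$ forces $g'(h)\ge 0$ is a worthwhile addition, since the paper's computation ends with $|g'(h)(xy'-x'y)|$ and moves $g'(h)$ outside the modulus in the statement without comment.
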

\begin{proof} (i) Since $\mathcal{D}=\sqrt{x^2(g(h))+y^2(g(h)) }$ (equals to $\xi(g(h))$ in (\ref{xxsecpla8})), we get the result.

(ii) Note that $$\overrightarrow{e}_{\overrightarrow{OP}} = \frac{\{x(g(h))  ,y(g(h))  \}}{\sqrt{x^2(g(h))  +y^2(g(h))  }}.$$
 We therefore get
\begin{align}\frac{\mathrm{d}\overrightarrow{e}_{\overrightarrow{OP}} }{\mathrm{d}h}
 =&-\frac{1}{2}(x^2  +y^2 )^{-\frac{3}{2}}2(x  x'(t)  +y  y'(t)  )g'(h)\{x  ,y  \}\notag\\&+(x^2  +y^2 )^{-\frac{1}{2}}\{x'(t)  , y'(t)  \}g'(h)\notag\\
 =&-(x^2  +y^2 )^{-\frac{3}{2}}g'(h) (x  x'(t)  +y  y'(t)  )\{x  ,y  \}\notag\\&+(x^2  +y^2 )^{-\frac{3}{2}}g'(h)(x^2  +y^2 )\{x'(t)  , y'(t)  \}\notag\\
 =&-(x^2  +y^2 )^{-\frac{3}{2}}g'(h)\{x^2  x'(t)  + y  y'(t)x  ,
 x  x'(t)y  +y^2  y'(t)  \}\notag\\
  &+(x^2  +y^2 )^{-\frac{3}{2}}g'(h)\{x^2  x'(t)  +y^2  x'(t) , x^2  y'(t)  +y^2  y'(t)  \}\notag\\
   =&(x^2  +y^2 )^{-\frac{3}{2}}g'(h)\{ - y  y'(t)x  +y^2  x'(t)  ,
   -x  x'(t) y +x^2  y'(t)   \}\notag\\
  =&(x^2  +y^2 )^{-\frac{3}{2}}g'(h)\{y  (- y'(t)x  +y  x'(t) ),   x  (-x'(t) y +x  y'(t)  )\} \notag\\
  =&(x^2  +y^2 )^{-\frac{3}{2}}g'(h)\{y  (- x y'(t)  +x'(t)y  ),   x  (-x'(t) y +x  y'(t)  )\} \notag\\
 =&g'(h)( x y'(t)  -x'(t) y )\frac{\{ -y  ,  x   \}}{ (x^2  +y^2 )^{\frac{3}{2}}}.\notag\end{align}
It follows that
$$|\frac{\mathrm{d}\overrightarrow{e}_{\overrightarrow{OP}} }{\mathrm{d}h}|= \frac{|g'(h)(x y'(t) -  x'(t)y ) |}{x^2  +y^2  }.$$
This is exactly what we want.

\end{proof}

\subsection{Plane curves with general rotating frames 2}

The origin of frame (\ref{xxsecpla2}) is $O(0,0)$. Next we will consider the rotating frame with  a point different from it as the origin. Let us choose a point $A$ in $\mathbb{R}^2$ outside the curve (\ref{xxsecpla1}), and let its coordinates  be $(a_1,a_2)$\,($\neq(0,0)$). Set up a rotating frame at $A$ as follows:
\begin{equation}\label{xxsecpla9}
 \{A;\overrightarrow{e}_1(t), \overrightarrow{e}_2(t) \}, \ \ \ t_0 < t< t_1,
\end{equation}
where $\overrightarrow{e}_1(t)= \frac{\{x(t)-a_1,y(t)-a_2\}}{\sqrt{(x(t)-a_1)^2+(y(t)-a_2)^2}}$, $\overrightarrow{e}_2(t)=\frac{\{-y(t)+a_2,x(t)-a_1\}}{\sqrt{(x(t)-a_1)^2+(y(t)-a_2)^2}}$. Let the coordinates of (\ref{xxsecpla9}) be $\xi_1$, $\eta_1$. Then  the equation of (\ref{xxsecpla1}) under (\ref{xxsecpla9}) is
\begin{equation}\label{xxsecpla10}
  \xi_1(t)= \sqrt{(x(t)-a_1)^2+(y(t)-a_2)^2 }, \, \eta_1(t)=0,\ \ \ t_0 < t< t_1.
\end{equation}

\textbf{Note:} In this subsection, we default to  $t_0 <t< t_1$, and  (\ref{xxsecpla1}) does not include  the point $(a_1,a_2)$.

Now the behaviour of the moving points on (\ref{xxsecpla1}) can be described by the following  proposition.
\begin{proposition}\label{xxsec2.5}  Let $P$ be a moving point on (\ref{xxsecpla1}), and let its parameter be $t$.
\begin{enumerate}
  \item[$\mathrm{(i)}$] For  $\mathcal{D}=|\overrightarrow{AP}|$, we have
   \begin{align}
\frac{ \mathrm{d}\mathcal{D}}{\mathrm{d}t}
  =& \frac{(x(t) -a_1)x'(t) +(y(t) -a_2)y'(t)}{\sqrt{(x(t)-a_1)^2+(y(t) -a_2)^2}} . \notag
\end{align}
\item[$\mathrm{(ii)}$] The rotational speed of $\overrightarrow{AP}$  with respect to $t$ is $$\frac{|(x(t) -a_1)y'(t) -x'(t) (y(t) -a_2)|}{(x(t) -a_1)^2+(y(t)-a_2)^2 }.$$
\end{enumerate}
\end{proposition}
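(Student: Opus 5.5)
The plan is to reduce Proposition \ref{xxsec2.5} to Proposition \ref{xxsec2.1} by a translation of the origin. Since $A$ is a fixed point, $\overrightarrow{AP}=\overrightarrow{OP}-\overrightarrow{OA}=\{x(t)-a_1,\,y(t)-a_2\}$. Set $\tilde x(t)=x(t)-a_1$ and $\tilde y(t)=y(t)-a_2$. Then $\tilde x'(t)=x'(t)$ and $\tilde y'(t)=y'(t)$, because $a_1,a_2$ are constants. Moreover, the curve $\tilde{\overrightarrow{r}}(t)=\{\tilde x(t),\tilde y(t)\}$ is of class $C^2$ and does not pass through $O(0,0)$, precisely because (\ref{xxsecpla1}) does not contain $(a_1,a_2)$. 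So the hypotheses of Proposition \ref{xxsec2.1} hold for $\tilde{\overrightarrow{r}}$, with $\overrightarrow{AP}$ playing the role of $\overrightarrow{OP}$.

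For (i), note that $\mathcal{D}=|\overrightarrow{AP}|=\sqrt{\tilde x^2+\tilde y^2}$, which is $\xi_1(t)$ in (\ref{xxsecpla10}). Applying Proposition \ref{xxsec2.1}(i) to $\tilde{\overrightarrow{r}}$ gives $\frac{\mathrm{d}\mathcal{D}}{\mathrm{d}t}=\frac{\tilde x\tilde x'+\tilde y\tilde y'}{\sqrt{\tilde x^2+\tilde y^2}}$. Substituting back $\tilde x=x-a_1$, $\tilde y=y-a_2$, $\tilde x'=x'$, $\tilde y'=y'$ yields the stated formula.

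For (ii), write $\overrightarrow{e}_{\overrightarrow{AP}}=\frac{\{\tilde x,\tilde y\}}{\sqrt{\tilde x^2+\tilde y^2}}$. The computation (\ref{xxsecpla4}) then carries over verbatim to give
\begin{align}
\frac{\mathrm{d}\overrightarrow{e}_{\overrightarrow{AP}}}{\mathrm{d}t}=\frac{(\tilde x y'-x'\tilde y)\{-\tilde y,\tilde x\}}{(\tilde x^2+\tilde y^2)^{\frac{3}{2}}}.\notag
\end{align}
Since $|\{-\tilde y,\tilde x\}|=\sqrt{\tilde x^2+\tilde y^2}$, the norm of this vector is $\frac{|(x-a_1)y'-x'(y-a_2)|}{(x-a_1)^2+(y-a_2)^2}$, which is the claimed rotational speed.

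There is no genuine obstacle here. The only point needing care is checking that the translated curve satisfies the standing non-vanishing assumption of Proposition \ref{xxsec2.1}, and this is exactly the note that (\ref{xxsecpla1}) avoids $(a_1,a_2)$.
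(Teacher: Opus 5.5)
Your proposal is correct and follows essentially the same route as the paper. The paper also gets (i) from $\mathcal{D}=\xi_1(t)$ in (\ref{xxsecpla10}), and gets (ii) by applying the computation (\ref{xxsecpla4}) to the translated components $x-a_1$, $y-a_2$ and then taking the norm.
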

\begin{proof} (i) Evidently, we have $\mathcal{D}=\sqrt{(x(t)-a_1)^2+(y(t)-a_2)^2 }$\,(equal to $\xi_1(t)$ in (\ref{xxsecpla10})), so the result follows.

(ii) Note that $$\overrightarrow{e}_{\overrightarrow{AP}} = \frac{\{x(t) -a_1,y(t) -a_2\}}{\sqrt{(x(t) -a_1)^2+(y(t) -a_2)^2}}.$$
Therefore, using the result of (\ref{xxsecpla4}), we have
\begin{align}\frac{\mathrm{d}\overrightarrow{e}_{\overrightarrow{AP}} }{\mathrm{d}t}
=&[(x -a_1)y' -x'(y -a_2) ]\frac{\{ -(y -a_2),  (x -a_1) \}}{ [(x -a_1)^2+(y -a_2)^2]^{\frac{3}{2}}}.\notag\end{align}
Consequently,
$$|\frac{\mathrm{d}\overrightarrow{e}_{\overrightarrow{AP}} }{\mathrm{d}t}|= \frac{|(x -a_1)y' -x'(y -a_2) |}{(x -a_1)^2+(y -a_2)^2 }.$$

\end{proof}

From Proposition \ref{xxsec2.5}(i), one can derive \begin{align}
\frac{ \mathrm{d}^2\mathcal{D}}{\mathrm{d}t^2}
  =& -[(x -a_1)^2+(y -a_2)^2]^{-\frac{3}{2}}[(x -a_1)x' +(y -a_2)y' ]^2\notag\\
  &+[(x -a_1)^2+(y -a_2)^2]^{-\frac{1}{2}}[x'^2 + (x -a_1)x'' +y'^2 +(y -a_2)y''  ] .\label{xxsecpla11}
\end{align}

The comparisons of Proposition \ref{xxsec2.1}(i) and Proposition \ref{xxsec2.5}(i) (resp. (\ref{xxsecpla5}) and (\ref{xxsecpla11})), as well as Proposition \ref{xxsec2.1}(ii) and Proposition \ref{xxsec2.5}(ii),  show that first-order (resp. second-order) derivatives and rotational speed provided by  $O$ and $A$ for a given point of (\ref{xxsecpla1}) are not the same in  general. Assuming the origin of the rotating frame is the  power source, then selecting a suitable origin for a rotating frame is a matter worthy of consideration.

\subsection{Plane curves with local rotating frames}

Let the coordinates of point $P$ on (\ref{xxsecpla1}) be $(x(t),y(t))$. We set up a rotating frame at $P$ as follows:
\begin{equation}\label{xxsecpla12}
 \{P(t);\overrightarrow{e}_1(\Delta t), \overrightarrow{e}_2(\Delta t) \}, \ \ \ t_0 < t< t_1,
\end{equation}
where $\overrightarrow{e}_1(\Delta t)= \frac{\{x(t+\Delta t)-x(t),y(t+\Delta t)-y(t)\}}{\sqrt{(x(t+\Delta t)-x(t))^2+(y(t+\Delta t)-y(t))^2}}$ and $\overrightarrow{e}_2(\Delta t)=\frac{\{-y(t+\Delta t)+y(t),x(t+\Delta t)-x(t)\}}{\sqrt{(x(t+\Delta t)-x(t))^2+(y(t+\Delta t)-y(t))^2}}$ with $\Delta t> 0$.
Frame (\ref{xxsecpla12}) is a local rotating frame of (\ref{xxsecpla1}). If we denote the coordinates of (\ref{xxsecpla12}) by $\xi_P$ and $\eta_P$, then the equation of (\ref{xxsecpla1}) near point $P$ under (\ref{xxsecpla12}) can be written as
\begin{equation}\label{xxsecpla13}
  \xi_P(\Delta t)= \sqrt{f_1^2(\Delta t)+f_2^2(\Delta t) }, \, \eta_P(\Delta t)=0,
\end{equation}
where $f_1(\Delta t)=x(t+\Delta t)-x(t)$ and $f_2(\Delta t)=y(t+\Delta t)-y(t)$ with $\Delta t> 0$.

\textbf{Note:} In this subsection, We assume by default that $t_0 <t< t_1$, $\Delta t>0$ is sufficiently small. Moreover, (\ref{xxsecpla1}) is regular.

We are now ready to consider the behaviour of the point $Q$ on (\ref{xxsecpla1}) near  $P$.
\begin{proposition}\label{xxsec2.6} Let $Q$ be a moving point on (\ref{xxsecpla1}) near  $P$, and let its parameter be $\Delta t$.
\begin{enumerate}
  \item[$\mathrm{(i)}$] For $\mathcal{D}_P=|\overrightarrow{PQ}|$, we have
\begin{align} \frac{ \mathrm{d}\mathcal{D}_P }{\mathrm{d}\Delta t}
=&\frac{f_1(\Delta t) x'(t+\Delta t) +f_2(\Delta t)  y'(t+\Delta t)}{\sqrt{f_1^2(\Delta t)+f_2^2(\Delta t)}} ,\ \Delta t>0; \notag\\
 {\mathcal{D}_{P}'}_+(0)&=\sqrt{x'^2(t)+y'^2(t)} .\notag\end{align}
\item[$\mathrm{(ii)}$] The rotational speed of $\overrightarrow{PQ}$ with respect to $\Delta t$ is
 \begin{align}&\frac{|y'(t+\Delta t) f_1(\Delta t) -f_2(\Delta t)x'(t+\Delta t) |}{f_1^2(\Delta t)+f_2^2(\Delta t) }, \ \ \Delta t>0;\notag\\
 &\frac{1}{2}|\frac{x'(t)y''(t)-x''(t)y'(t)}{x'^2(t)+ y'^2(t)}|,\  \Delta t\rightarrow 0^+\notag.\end{align}
\end{enumerate}
\end{proposition}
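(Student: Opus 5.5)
The plan is to treat $P$ as a fixed origin, exactly as the point $A$ was treated in Proposition \ref{xxsec2.5}, with $\Delta t$ as the parameter. Here the "curve" is $\Delta t\mapsto \{f_1(\Delta t),f_2(\Delta t)\}$, with $f_1'(\Delta t)=x'(t+\Delta t)$ and $f_2'(\Delta t)=y'(t+\Delta t)$. For $\Delta t>0$ small, regularity gives $\{f_1,f_2\}\neq\{0,0\}$. Indeed,
\[
\frac{f_1^2+f_2^2}{\Delta t^2}\to x'^2(t)+y'^2(t)>0 .
\]
So Proposition \ref{xxsec2.5} (equivalently Proposition \ref{xxsec2.1} applied to $\{f_1,f_2\}$) applies verbatim. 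It yields the stated formula for $\frac{\mathrm{d}\mathcal{D}_P}{\mathrm{d}\Delta t}$ and the rotational speed
\[
\frac{|f_1 y'(t+\Delta t)-f_2 x'(t+\Delta t)|}{f_1^2+f_2^2}
\]
for $\Delta t>0$, using the derivative of the unit vector computed in (\ref{xxsecpla4}).

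For the one-sided derivative ${\mathcal{D}_P}'_+(0)$, I will use the definition directly. Since $\mathcal{D}_P(0)=0$, we have
\[
{\mathcal{D}_P}'_+(0)=\lim_{\Delta t\to0^+}\frac{\sqrt{f_1^2+f_2^2}}{\Delta t}
=\lim\sqrt{\Big(\frac{f_1}{\Delta t}\Big)^2+\Big(\frac{f_2}{\Delta t}\Big)^2}
=\sqrt{x'^2(t)+y'^2(t)} .
\]
This holds because $f_i/\Delta t$ are difference quotients.

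The main step is the limit of the rotational speed as $\Delta t\to0^+$. I will Taylor expand using $C^2$ regularity, writing $h=\Delta t$:
\[
f_1=x'h+\tfrac12x''h^2+o(h^2),\qquad f_2=y'h+\tfrac12y''h^2+o(h^2),
\]
\[
x'(t+h)=x'+x''h+o(h),\qquad y'(t+h)=y'+y''h+o(h),
\]
where all unadorned quantities are evaluated at $t$. The numerator is $f_1y'(t+h)-f_2x'(t+h)$. Its order-$h$ terms $x'y'h-y'x'h$ cancel. At order $h^2$ we get
\[
x'y''h^2+\tfrac12x''y'h^2-y'x''h^2-\tfrac12y''x'h^2=\tfrac12(x'y''-x''y')h^2 .
\]
So the numerator equals $\tfrac12(x'y''-x''y')h^2+o(h^2)$. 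The denominator equals $(x'^2+y'^2)h^2+o(h^2)$. Dividing and taking absolute values gives $\frac12\left|\frac{x'y''-x''y'}{x'^2+y'^2}\right|$.

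The delicate point is the remainder control. Only $C^2$ is assumed, so I will use the Peano form of the remainder (or the mean value theorem on $f_i$ together with continuity of $x''$ and $y''$) rather than third-order terms. This is enough, because the $o(h^2)$ errors vanish after division by $h^2$.
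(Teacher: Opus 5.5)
Your argument is correct. For the $\Delta t>0$ formulas and for ${\mathcal{D}_P}'_+(0)$ it is essentially the paper's argument. The paper redoes the computation of (\ref{xxsecpla4}) with $f_1,f_2$ in place of $x,y$, and uses Lagrange remainders where you use the difference quotients $f_i/\Delta t$.

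The real difference is in the limiting rotational speed. The paper does not pass to the limit in the $\Delta t>0$ formula. Instead it sets $\overrightarrow{e}_{\overrightarrow{PQ}}(0)=\{x'(t),y'(t)\}/\sqrt{x'^2(t)+y'^2(t)}$ and computes the one-sided derivative $\lim_{\Delta t\to0^+}(\overrightarrow{e}_{\overrightarrow{PQ}}(\Delta t)-\overrightarrow{e}_{\overrightarrow{PQ}}(0))/\Delta t$ component by component, rationalizing each difference. This produces the vector $\overrightarrow{\psi}(t)$ of (\ref{xxsecpla20}), and then its length. You instead take $\lim_{\Delta t\to0^+}$ of the speed itself, using the cancellation of the order-$\Delta t$ terms in $f_1y'(t+\Delta t)-f_2x'(t+\Delta t)$.

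A priori these are different quantities: a limit of the derivative versus the derivative at the endpoint. Under the paper's reading of the statement you should add one line. Your expansions also give $\{-f_2,f_1\}/\sqrt{f_1^2+f_2^2}\to\{-y'(t),x'(t)\}/\sqrt{x'^2(t)+y'^2(t)}$, hence $\mathrm{d}\overrightarrow{e}_{\overrightarrow{PQ}}/\mathrm{d}\Delta t\to\overrightarrow{\psi}(t)$. The mean value theorem applied to each component on $[0,\Delta t]$ then converts this into the one-sided derivative at $0$.

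Your route is shorter and more robust. The paper's componentwise computation divides by $x'(t)$ and by $y'(t)$: the conjugate sum it inverts tends to $2x'(t)/\sqrt{x'^2(t)+y'^2(t)}$. So that derivation breaks down wherever $x'(t)=0$ or $y'(t)=0$, for instance at the vertices of the ellipse in \S 5. Your Peano-remainder expansion needs only regularity and $C^2$. The paper's route, in exchange, yields the vector $\overrightarrow{\psi}(t)$ itself, which it records for later use, rather than only its magnitude.
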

\begin{proof} (i) Since $\mathcal{D}_P=\sqrt{f_1^2(\Delta t)+f_2^2(\Delta t)}$\,(equals to $\xi_P(\Delta t)$ in (\ref{xxsecpla13})), we have
\begin{align}\frac{ \mathrm{d}\mathcal{D}_P }{\mathrm{d}\Delta t}=&\frac{1}{2}(f_1^2(\Delta t)+f_2^2(\Delta t))^{-\frac{1}{2}}(2(f_1(\Delta t) f_1'(\Delta t) +2f_2(\Delta t)f_2'(\Delta t))\notag\\
=&(f_1^2(\Delta t)+f_2^2(\Delta t))^{-\frac{1}{2}}(f_1(\Delta t) x'(t+\Delta t) +f_2(\Delta t)  y'(t+\Delta t) )\notag, \end{align}
where $\Delta t>0$.

Next let us consider  $\Delta t\rightarrow 0^+$. The following Taylor expansions will be used.
\begin{align}x(t+\Delta t)-x(t)=&x'(t)\Delta t+\frac{1}{2}x''(\varsigma_1)\Delta t^2,\label{xxsecpla14}\\
y(t+\Delta t)-y(t)=&y'(t)\Delta t+\frac{1}{2}y''(\varsigma_2)\Delta t^2,\label{xxsecpla15}
\end{align}
where $\varsigma_1, \varsigma_2$ are between $t$ and $t+\Delta t$.
Now we can write
\begin{align}\mathcal{D}_P&=\sqrt{(x'(t)\Delta t+\frac{1}{2}x''(\varsigma_1)\Delta t^2)^2+(y'(t)\Delta t+\frac{1}{2}y''(\varsigma_2)\Delta t^2)^2}\notag\\
&=
\Delta t\sqrt{(x'(t)+\frac{1}{2}x''(\varsigma_1)\Delta t)^2+(y'(t) +\frac{1}{2}y''(\varsigma_2)\Delta t)^2}\notag, \end{align}
where $\Delta t>0$. Hence
\begin{align}{\mathcal{D}_{P}'}_+(0)
 &=\lim_{\Delta t\rightarrow 0^+}\frac{\mathcal{D}_P(\Delta t)-\mathcal{D}_P(0)}{\Delta t}=\sqrt{x'^2(t)+y'^2(t)},\label{xxsecpla16}
\end{align}
where $\mathcal{D}_P(0)=0$.

(ii) We begin by noticing that$$\overrightarrow{e}_{\overrightarrow{PQ}} = \frac{\{f_1(\Delta t) ,f_2(\Delta t) \}}{\sqrt{(f_1^2(\Delta t)+f_2^2(\Delta t)) }}.$$
Therefor, for $\Delta t>0$, we have
\begin{align}\frac{\mathrm{d}\overrightarrow{e}_{\overrightarrow{PQ}} }{\mathrm{d}\Delta t}
 =&-\frac{1}{2}(f_1^2 +f_2^2 )^{-\frac{3}{2}}\times2[f_1  x'(t+\Delta t) +f_2  y'(t+\Delta t) ]
\{f_1  ,f_2  \}\notag\\&+(f_1^2 +f_2^2 )^{-\frac{1}{2}}\{x'(t+\Delta t) , y'(t+\Delta t) \}\notag\\
 =&-(f_1^2 +f_2^2 )^{-\frac{3}{2}}[f_1  x'(t+\Delta t) +f_2 y'(t+\Delta t) ]
 \{f_1  ,f_2  \}\notag\\
&+(f_1^2 +f_2^2 )^{-\frac{3}{2}} (f_1^2 +f_2^2  )\{x'(t+\Delta t) , y'(t+\Delta t) \}\notag\\
 =&-(f_1^2 +f_2^2 )^{-\frac{3}{2}}\notag\\
&\times
 \{f_1^2  x'(t+\Delta t) + f_2  y'(t+\Delta t)f_1 ,
f_1  x'(t+\Delta t)f_2  +f_2^2  y'(t+\Delta t) \}\notag\\
  &+(f_1^2 +f_2^2 )^{-\frac{3}{2}}\notag\\
&\times
  \{f_1^2  x'(t+\Delta t) +f_2^2 x'(t+\Delta t),
 f_1^2  y'(t+\Delta t) +f_2^2  y'(t+\Delta t) \}\notag\\
   =&(f_1^2 +f_2^2 )^{-\frac{3}{2}} \notag\\
&\times\{ -f_2 y'(t+\Delta t)f_1  +f_2^2  x'(t+\Delta t),
 -f_1  x'(t+\Delta t)f_2  +f_1^2 y'(t+\Delta t)  \}\notag\\
  =& (f_1^2 +f_2^2 )^{-\frac{3}{2}} \notag\\
&\times \{f_2  (- y'(t+\Delta t)f_1  +f_2  x'(t+\Delta t)),   f_1  (- x'(t+\Delta t) f_2 +f_1  y'(t+\Delta t) )\}\notag\\
 =& (f_1^2 +f_2^2 )^{-\frac{3}{2}} \notag\\
&\times \{f_2  (- y'(t+\Delta t)f_1  +f_2  x'(t+\Delta t)),  f_1  (- f_2 x'(t+\Delta t)+y'(t+\Delta t)f_1  )\}\notag\\
=&(y'(t+\Delta t)f_1  -f_2 x'(t+\Delta t) ) \frac{\{ -f_2  ,  f_1  \}}{ (f_1^2 +f_2^2 )^{\frac{3}{2}}},\notag\end{align}
 $$|\frac{\mathrm{d}\overrightarrow{e}_{\overrightarrow{PQ}} }{\mathrm{d}\Delta t}|= \frac{|y'(t+\Delta t) f_1(\Delta t) -f_2(\Delta t)x'(t+\Delta t) |}{f_1^2(\Delta t)+f_2^2(\Delta t) }.$$

Next let us consider $\Delta t\rightarrow 0^+$. By (\ref{xxsecpla14}) and (\ref{xxsecpla15}), we know that
 \begin{align}\overrightarrow{e}_{\overrightarrow{PQ}}=& \frac{\{x'(t)\Delta t+\frac{1}{2}x''(\varsigma_1)\Delta t^2,y'(t)\Delta t+\frac{1}{2}y''(\varsigma_2)\Delta t^2\}}{\sqrt{(x'(t)\Delta t+\frac{1}{2}x''(\varsigma_1)\Delta t^2)^2+(y'(t)\Delta t+\frac{1}{2}y''(\varsigma_2)\Delta t^2)^2}}\notag\\
=&
\frac{\{x'(t)+\frac{1}{2}x''(\varsigma_1)\Delta t,y'(t)+\frac{1}{2}y''(\varsigma_2)\Delta t\}}{\sqrt{(x'(t)+\frac{1}{2}x''(\varsigma_1)\Delta t)^2+(y'(t)+\frac{1}{2}y''(\varsigma_2)\Delta t)^2}},\notag\end{align}
where $\Delta t>0$. Hence
\begin{align}&\lim_{\Delta t\rightarrow 0^+}\frac{\overrightarrow{e}_{\overrightarrow{PQ}}(\Delta t)-\overrightarrow{e}_{\overrightarrow{PQ}}(0)}{\Delta t}\notag\\
 =&\lim_{\Delta t\rightarrow 0^+} \frac{\frac{\{x'(t)+\frac{1}{2}x''(\varsigma_1)\Delta t,y'(t)+\frac{1}{2}y''(\varsigma_2)\Delta t\}}{\sqrt{(x'(t)+\frac{1}{2}x''(\varsigma_1)\Delta t)^2+(y'(t)+\frac{1}{2}y''(\varsigma_2)\Delta t)^2}}-\frac{\{x'(t),y'(t)\}}{\sqrt{x'^2(t)+y'^2(t)}}}{\Delta t}
  .\label{xxsecpla17}
\end{align}
Before continuing, we need to carry out the following calculations first:
\begin{align}&\lim_{\Delta t\rightarrow 0^+} \frac{\frac{x'(t)+\frac{1}{2}x''(\varsigma_1)\Delta t}{\sqrt{(x'(t)+\frac{1}{2}x''(\varsigma_1)\Delta t)^2+(y'(t)+\frac{1}{2}y''(\varsigma_2)\Delta t)^2}}-\frac{x'(t)}{\sqrt{x'^2(t)+y'^2(t)}}}{\Delta t}\notag\\
 =&\lim_{\Delta t\rightarrow 0^+} \frac{\frac{(x'(t)+\frac{1}{2}x''(\varsigma_1)\Delta t)^2}{(x'(t)+\frac{1}{2}x''(\varsigma_1)\Delta t)^2+(y'(t)+\frac{1}{2}y''(\varsigma_2)\Delta t)^2}-\frac{x'^2(t)}{x'^2(t)+y'^2(t)}}{\Delta t[\frac{x'(t)+\frac{1}{2}x''(\varsigma_1)\Delta t}{\sqrt{(x'(t)+\frac{1}{2}x''(\varsigma_1)\Delta t)^2+(y'(t)+\frac{1}{2}y''(\varsigma_2)\Delta t)^2}}+\frac{x'(t)}{\sqrt{x'^2(t)+y'^2(t)}}]}
 \notag\\
  =&\lim_{\Delta t\rightarrow 0^+}\frac{1}{\Delta t}[\frac{x'(t)+\frac{1}{2}x''(\varsigma_1)\Delta t}{\sqrt{(x'(t)+\frac{1}{2}x''(\varsigma_1)\Delta t)^2+(y'(t)+\frac{1}{2}y''(\varsigma_2)\Delta t)^2}}+\frac{x'(t)}{\sqrt{x'^2(t)+y'^2(t)}}]^{-1}\notag\\&\times
  [(x'(t)+\frac{1}{2}x''(\varsigma_1)\Delta t)^2+(y'(t)+\frac{1}{2}y''(\varsigma_2)\Delta t)^2]^{-1}(x'^2(t)+y'^2(t))^{-1}\notag\\&\times
  [(x'(t)+\frac{1}{2}x''(\varsigma_1)\Delta t)^2(x'^2(t)+y'^2(t))\notag\\&\ \ \ -x'^2(t)(x'(t)+\frac{1}{2}x''(\varsigma_1)\Delta t)^2-x'^2(t)(y'(t)+\frac{1}{2}y''(\varsigma_2)\Delta t)^2]\notag\\
  =&\lim_{\Delta t\rightarrow 0^+}\frac{1}{\Delta t}[\frac{x'(t)+\frac{1}{2}x''(\varsigma_1)\Delta t}{\sqrt{(x'(t)+\frac{1}{2}x''(\varsigma_1)\Delta t)^2+(y'(t)+\frac{1}{2}y''(\varsigma_2)\Delta t)^2}}+\frac{x'(t)}{\sqrt{x'^2(t)+y'^2(t)}}]^{-1}\notag\\&\times
  [(x'(t)+\frac{1}{2}x''(\varsigma_1)\Delta t)^2+(y'(t)+\frac{1}{2}y''(\varsigma_2)\Delta t)^2]^{-1}(x'^2(t)+y'^2(t))^{-1}\notag\\&\times
   [(x'(t)x''(\varsigma_1)\Delta t+\frac{1}{4}x''^2(\varsigma_1)\Delta t^2)(x'^2(t)+y'^2(t))\notag\\&
   \ \ \ -x'^2(t)(x'(t)x''(\varsigma_1)\Delta t+\frac{1}{4}x''^2(\varsigma_1)\Delta t^2+y'(t)y''(\varsigma_2)\Delta t+\frac{1}{4}y''^2(\varsigma_2)\Delta t^2)]\notag\\
  =&\lim_{\Delta t\rightarrow 0^+}[\frac{x'(t)+\frac{1}{2}x''(\varsigma_1)\Delta t}{\sqrt{(x'(t)+\frac{1}{2}x''(\varsigma_1)\Delta t)^2+(y'(t)+\frac{1}{2}y''(\varsigma_2)\Delta t)^2}}+\frac{x'(t)}{\sqrt{x'^2(t)+y'^2(t)}}]^{-1}\notag\\&\times
  [(x'(t)+\frac{1}{2}x''(\varsigma_1)\Delta t)^2+(y'(t)+\frac{1}{2}y''(\varsigma_2)\Delta t)^2]^{-1}(x'^2(t)+y'^2(t))^{-1}\notag\\&\times
   [(x'(t)x''(\varsigma_1)+\frac{1}{4}x''^2(\varsigma_1)\Delta t)(x'^2(t)+y'^2(t))\notag\\&
   \ \ \ -x'^2(t)(x'(t)x''(\varsigma_1)+\frac{1}{4}x''^2(\varsigma_1)\Delta t+y'(t)y''(\varsigma_2) +\frac{1}{4}y''^2(\varsigma_2)\Delta t)]\notag\\
   =& (\frac{2x'(t)}{\sqrt{x'^2(t)+y'^2(t)}})^{-1}
  (x'^2(t)+y'^2(t))^{-2}\notag\\&\times
   [x'(t)x''(t)(x'^2(t)+y'^2(t))-x'^2(t)(x'(t)x''(t)+y'(t)y''(t) )]\notag\\
   =& \frac{1}{2} (x'(t))^{-1}(x'^2(t)+y'^2(t))^{\frac{1}{2}}
  (x'^2(t)+y'^2(t))^{-2}\notag\\&\times
   [x'(t)x''(t)(x'^2(t)+y'^2(t))-x'^2(t)(x'(t)x''(t)+y'(t)y''(t) )]\notag\\
   =& \frac{1}{2} (x'^2(t)+y'^2(t))^{-\frac{3}{2}}
   [x''(t)(x'^2(t)+y'^2(t))-x'(t)(x'(t)x''(t)+y'(t)y''(t) )]\notag\\
    =& \frac{1}{2}
  (x'^2(t)+y'^2(t))^{-\frac{3}{2}}
   (x''(t)y'^2(t)-x'(t)y'(t)y''(t) )\notag\\
     =& \frac{1}{2}
  (x'^2(t)+y'^2(t))^{-\frac{3}{2}}
   y'(t)(x''(t)y'(t)-x'(t)y''(t) )
  ,\label{xxsecpla18}\end{align}
  \begin{align}
&\lim_{\Delta t\rightarrow 0^+} \frac{\frac{y'(t)+\frac{1}{2}y''(\varsigma_2)\Delta t}{\sqrt{(x'(t)+\frac{1}{2}x''(\varsigma_1)\Delta t)^2+(y'(t)+\frac{1}{2}y''(\varsigma_2)\Delta t)^2}}-\frac{y'(t)}{\sqrt{x'^2(t)+y'^2(t)}}}{\Delta t}\notag\\
 =&\lim_{\Delta t\rightarrow 0^+} \frac{\frac{(y'(t)+\frac{1}{2}y''(\varsigma_2)\Delta t)^2}{(x'(t)+\frac{1}{2}x''(\varsigma_1)\Delta t)^2+(y'(t)+\frac{1}{2}y''(\varsigma_2)\Delta t)^2}-\frac{y'^2(t)}{x'^2(t)+y'^2(t)}}{\Delta t[\frac{y'(t)+\frac{1}{2}y''(\varsigma_2)\Delta t}{\sqrt{(x'(t)+\frac{1}{2}x''(\varsigma_1)\Delta t)^2+(y'(t)+\frac{1}{2}y''(\varsigma_2)\Delta t)^2}}+\frac{y'(t)}{\sqrt{x'^2(t)+y'^2(t)}}]}
 \notag\\
  =&\lim_{\Delta t\rightarrow 0^+}\frac{1}{\Delta t}[\frac{y'(t)+\frac{1}{2}y''(\varsigma_2)\Delta t}{\sqrt{(x'(t)+\frac{1}{2}x''(\varsigma_1)\Delta t)^2+(y'(t)+\frac{1}{2}y''(\varsigma_2)\Delta t)^2}}+\frac{y'(t)}{\sqrt{x'^2(t)+y'^2(t)}}]^{-1}\notag\\&\times
  [(x'(t)+\frac{1}{2}x''(\varsigma_1)\Delta t)^2+(y'(t)+\frac{1}{2}y''(\varsigma_2)\Delta t)^2]^{-1}(x'^2(t)+y'^2(t))^{-1}\notag\\&\times
  [(y'(t)+\frac{1}{2}y''(\varsigma_2)\Delta t)^2(x'^2(t)+y'^2(t))\notag\\&\ \ \ -y'^2(t)(x'(t)+\frac{1}{2}x''(\varsigma_1)\Delta t)^2-y'^2(t)(y'(t)+\frac{1}{2}y''(\varsigma_2)\Delta t)^2]\notag\\
  =&\lim_{\Delta t\rightarrow 0^+}\frac{1}{\Delta t}[\frac{y'(t)+\frac{1}{2}y''(\varsigma_2)\Delta t}{\sqrt{(x'(t)+\frac{1}{2}x''(\varsigma_1)\Delta t)^2+(y'(t)+\frac{1}{2}y''(\varsigma_2)\Delta t)^2}}+\frac{y'(t)}{\sqrt{x'^2(t)+y'^2(t)}}]^{-1}\notag\\&\times
  [(x'(t)+\frac{1}{2}x''(\varsigma_1)\Delta t)^2+(y'(t)+\frac{1}{2}y''(\varsigma_2)\Delta t)^2]^{-1}(x'^2(t)+y'^2(t))^{-1}\notag\\&\times
   [(y'(t)y''(\varsigma_2)\Delta t+\frac{1}{4}y''^2(\varsigma_2)\Delta t^2)(x'^2(t)+y'^2(t))\notag\\&
   \ \ \ -y'^2(t)(x'(t)x''(\varsigma_1)\Delta t+\frac{1}{4}x''^2(\varsigma_1)\Delta t^2+y'(t)y''(\varsigma_2)\Delta t+\frac{1}{4}y''^2(\varsigma_2)\Delta t^2)]\notag\\
 =&\lim_{\Delta t\rightarrow 0^+}[\frac{y'(t)+\frac{1}{2}y''(\varsigma_2)\Delta t}{\sqrt{(x'(t)+\frac{1}{2}x''(\varsigma_1)\Delta t)^2+(y'(t)+\frac{1}{2}y''(\varsigma_2)\Delta t)^2}}+\frac{y'(t)}{\sqrt{x'^2(t)+y'^2(t)}}]^{-1}\notag\\&\times
  [(x'(t)+\frac{1}{2}x''(\varsigma_1)\Delta t)^2+(y'(t)+\frac{1}{2}y''(\varsigma_2)\Delta t)^2]^{-1}(x'^2(t)+y'^2(t))^{-1}\notag\\&\times
   [(y'(t)y''(\varsigma_2)+\frac{1}{4}y''^2(\varsigma_2)\Delta t)(x'^2(t)+y'^2(t))\notag\\&
   \ \ \ -y'^2(t)(x'(t)x''(\varsigma_1)+\frac{1}{4}x''^2(\varsigma_1)\Delta t+y'(t)y''(\varsigma_2) +\frac{1}{4}y''^2(\varsigma_2)\Delta t)]\notag\\
   =& (\frac{2y'(t)}{\sqrt{x'^2(t)+y'^2(t)}})^{-1}
  (x'^2(t)+y'^2(t))^{-2}\notag\\&\times
   [y'(t)y''(t)(x'^2(t)+y'^2(t))-y'^2(t)(x'(t)x''(t)+y'(t)y''(t) )]\notag\\
=& \frac{1}{2}(y'(t))^{-1}(x'^2(t)+y'^2(t))^{\frac{1}{2}}
  (x'^2(t)+y'^2(t))^{-2}\notag\\&\times
   [y'(t)y''(t)(x'^2(t)+y'^2(t))-y'^2(t)(x'(t)x''(t)+y'(t)y''(t) )]\notag\\
   =& \frac{1}{2}(x'^2(t)+y'^2(t))^{-\frac{3}{2}}
   [y''(t)(x'^2(t)+y'^2(t))-y'(t)(x'(t)x''(t)+y'(t)y''(t) )]\notag\\
=& \frac{1}{2}
  (x'^2(t)+y'^2(t))^{-\frac{3}{2}}
   (y''(t)x'^2(t)-y'(t)x'(t)x''(t) )\notag\\
     =& \frac{1}{2}
  (x'^2(t)+y'^2(t))^{-\frac{3}{2}}
   x'(t)(y''(t)x'(t)-y'(t)x''(t) )
  .\label{xxsecpla19}
\end{align}
Taking (\ref{xxsecpla18}) and (\ref{xxsecpla19}) into (\ref{xxsecpla17}), we arrive at
\begin{align}&\lim_{\Delta t\rightarrow 0^+}\frac{\overrightarrow{e}_{\overrightarrow{PQ}}(\Delta t)-\overrightarrow{e}_{\overrightarrow{PQ}}(0)}{\Delta t}\notag\\
=&\frac{1}{2}(x'^2(t)+ y'^2(t))^{-\frac{3}{2}}
\{ y'(t)(x''(t)y'(t)-x'(t)y''(t)),x'(t)(y''(t)x'(t)-y'(t)x''(t))\}\notag\\
=&\frac{1}{2}(x'^2(t)+ y'^2(t))^{-\frac{3}{2}}
\{ y'(t)(x''(t)y'(t)-x'(t)y''(t)),x'(t)(x'(t)y''(t)-x''(t)y'(t))\}\notag\\
=&
\frac{(x'(t)y''(t)-x''(t)y'(t))\{- y'(t),x'(t)\}}{2(x'^2(t)+ y'^2(t))^{\frac{3}{2}}}
,\label{xxsecpla20}\end{align}
whence
$$|\lim_{\Delta t\rightarrow 0^+}\frac{\overrightarrow{e}_{\overrightarrow{PQ}}(\Delta t)-\overrightarrow{e}_{\overrightarrow{PQ}}(0)}{\Delta t}|
=\frac{1}{2}\frac{|x'(t)y''(t)-x''(t)y'(t)|}{x'^2(t)+ y'^2(t)}
.$$

\end{proof}

\textbf{Note:} We call (\ref{xxsecpla16}) the local first-order derivative
and (\ref{xxsecpla20}) the local rotational velocity function of (\ref{xxsecpla1}). For convenience,
we will denote (\ref{xxsecpla16}) by $\phi(t)$  and (\ref{xxsecpla20}) by $\overrightarrow{\psi}(t)$, namely,
\begin{align}
 \phi(t)=&\sqrt{x'^2(t)+y'^2(t)},\notag\\
\overrightarrow{\psi}(t)=&\frac{(x'(t)y''(t)-x''(t)y'(t))\{- y'(t),x'(t)\}}{2(x'^2(t)+ y'^2(t))^{\frac{3}{2}}}
.\notag\end{align}

In fact, we can further discuss the local second-order derivative  of (\ref{xxsecpla1}), and the specific process is as follows: In light of the conclusion from Proposition \ref{xxsec2.6}(i) (the case of $\Delta t>0$), we have
 \begin{align}
\frac{ \mathrm{d}^2\mathcal{D}_P}{\mathrm{d}(\Delta t)^2}
  =& -(f_1^2(\Delta t)+f_2^2(\Delta t))^{-\frac{3}{2}}
  (f_1(\Delta t)x'(t+\Delta t) +f_2(\Delta t)  y'(t+\Delta t) )^2\notag\\
  &+(f_1^2(\Delta t)+f_2^2(\Delta t))^{-\frac{1}{2}}\notag\\
&\times(x'^2(t+\Delta t) + f_1(\Delta t) x''(t+\Delta t) +y'^2(t+\Delta t) +f_2(\Delta t) y''(t+\Delta t)),\notag
\end{align}
where $\Delta t>0$. For further calculation, besides (\ref{xxsecpla14}) and (\ref{xxsecpla15}), we also need
\begin{align}x'(t+\Delta t)-x'(t)=&x''(o_1)\Delta t,\notag\\
y'(t+\Delta t)-y'(t)=&y''(o_2)\Delta t,\notag
\end{align}
where $o_1, o_2$ are between $t$ and $t+\Delta t$. Then
\begin{align}
&\frac{ \mathrm{d}^2\mathcal{D}_P}{\mathrm{d}(\Delta t)^2}\notag\\
  =& -[(x'(t)\Delta t+\frac{1}{2}x''(\varsigma_1)\Delta t^2)^2+(y'(t)\Delta t+\frac{1}{2}y''(\varsigma_2)\Delta t^2)^2]^{-\frac{3}{2}}\notag\\&\times
 [(x'(t)\Delta t+\frac{1}{2}x''(\varsigma_1)\Delta t^2)(x'(t)+x''(o_1)\Delta t) +(y'(t)\Delta t+\frac{1}{2}y''(\varsigma_2)\Delta t^2)(y'(t)+y''(o_2) ]^2\notag\\
  &+[(x'(t)\Delta t+\frac{1}{2}x''(\varsigma_1)\Delta t^2)^2+(y'(t)\Delta t+\frac{1}{2}y''(\varsigma_2)\Delta t^2)^2]^{-\frac{1}{2}}\notag\\
&\times[(x'(t)+x''(o_1)\Delta t)^2 + (x'(t)\Delta t+\frac{1}{2}x''(\varsigma_1)\Delta t^2) x''(t+\Delta t) \notag\\&\ \ \ +(y'(t)+y''(o_2)\Delta t)^2 +(y'(t)\Delta t+\frac{1}{2}y''(\varsigma_2)\Delta t^2) y''(t+\Delta t)]\notag\\
 =& -\Delta t^{-3}[(x'(t)+\frac{1}{2}x''(\varsigma_1)\Delta t)^2+(y'(t)+\frac{1}{2}y''(\varsigma_2)\Delta t)^2]^{-\frac{3}{2}}\notag\\&\times
 \Delta t^{2}[(x'(t)+\frac{1}{2}x''(\varsigma_1)\Delta t)(x'(t)+x''(o_1)\Delta t) +(y'(t)+\frac{1}{2}y''(\varsigma_2)\Delta t)(y'(t)+y''(o_2) ]^2\notag\\
  &+\Delta t^{-1}[(x'(t)+\frac{1}{2}x''(\varsigma_1)\Delta t)^2+(y'(t)+\frac{1}{2}y''(\varsigma_2)\Delta t)^2]^{-\frac{1}{2}}\notag\\
&\times[(x'(t)+x''(o_1)\Delta t)^2 + (x'(t)\Delta t+\frac{1}{2}x''(\varsigma_1)\Delta t^2) x''(t+\Delta t)\notag\\&\ \ \  +(y'(t)+y''(o_2)\Delta t)^2 +(y'(t)\Delta t+\frac{1}{2}y''(\varsigma_2)\Delta t^2) y''(t+\Delta t)]\notag\\
=& -\Delta t^{-1}[(x'(t)+\frac{1}{2}x''(\varsigma_1)\Delta t)^2+(y'(t)+\frac{1}{2}y''(\varsigma_2)\Delta t)^2]^{-\frac{3}{2}}\notag\\&\times
 [(x'(t)+\frac{1}{2}x''(\varsigma_1)\Delta t)(x'(t)+x''(o_1)\Delta t) +(y'(t)+\frac{1}{2}y''(\varsigma_2)\Delta t)(y'(t)+y''(o_2)\Delta t) ]^2\notag\\
  &+\Delta t^{-1}[(x'(t)+\frac{1}{2}x''(\varsigma_1)\Delta t)^2+(y'(t)+\frac{1}{2}y''(\varsigma_2)\Delta t)^2]^{-\frac{3}{2}}\notag\\
&\times[(x'(t)+\frac{1}{2}x''(\varsigma_1)\Delta t)^2+(y'(t)+\frac{1}{2}y''(\varsigma_2)\Delta t)^2]\notag\\
&\times
[(x'(t)+x''(o_1)\Delta t)^2  +(y'(t)+y''(o_2)\Delta t)^2 ]\notag\\
&+\Delta t^{-1}[(x'(t)+\frac{1}{2}x''(\varsigma_1)\Delta t)^2+(y'(t)+\frac{1}{2}y''(\varsigma_2)\Delta t)^2]^{-\frac{1}{2}}\notag\\
&\times
[ (x'(t)\Delta t+\frac{1}{2}x''(\varsigma_1)\Delta t^2) x''(t+\Delta t)  +(y'(t)\Delta t+\frac{1}{2}y''(\varsigma_2)\Delta t^2) y''(t+\Delta t)]\notag\\
=& -\Delta t^{-1}[(x'(t)+\frac{1}{2}x''(\varsigma_1)\Delta t)^2+(y'(t)+\frac{1}{2}y''(\varsigma_2)\Delta t)^2]^{-\frac{3}{2}}\notag\\&\times
2(x'(t)+\frac{1}{2}x''(\varsigma_1)\Delta t)(x'(t)+x''(o_1)\Delta t)(y'(t)+\frac{1}{2}y''(\varsigma_2)\Delta t)(y'(t)+y''(o_2)\Delta t) \notag\\
  &+\Delta t^{-1}[(x'(t)+\frac{1}{2}x''(\varsigma_1)\Delta t)^2+(y'(t)+\frac{1}{2}y''(\varsigma_2)\Delta t)^2]^{-\frac{3}{2}}\notag\\
&\times[(x'(t)+\frac{1}{2}x''(\varsigma_1)\Delta t)^2(y'(t)+y''(o_2)\Delta t)^2+(y'(t)+\frac{1}{2}y''(\varsigma_2)\Delta t)^2(x'(t)+x''(o_1)\Delta t)^2 ]\notag\\
&+[(x'(t)+\frac{1}{2}x''(\varsigma_1)\Delta t)^2+(y'(t)+\frac{1}{2}y''(\varsigma_2)\Delta t)^2]^{-\frac{1}{2}}\notag\\
&\times
[ (x'(t)+\frac{1}{2}x''(\varsigma_1)\Delta t) x''(t+\Delta t)  +(y'(t)+\frac{1}{2}y''(\varsigma_2)\Delta t) y''(t+\Delta t)]\notag\\
=& -\Delta t^{-1}[(x'(t)+\frac{1}{2}x''(\varsigma_1)\Delta t)^2+(y'(t)+\frac{1}{2}y''(\varsigma_2)\Delta t)^2]^{-\frac{3}{2}}\notag\\&\times
2(x'^2(t)y'^2(t)+\frac{1}{2}x''(\varsigma_1)\Delta t x'(t)y'^2(t)+x''(o_1)\Delta t x'(t)y'^2(t)\notag\\&\ \ \ +\frac{1}{2}y''(\varsigma_2)\Delta t x'^2(t)y'(t)+y''(o_2)\Delta tx'^2(t)y'(t)+\cdots)\notag\\
  &+\Delta t^{-1}[(x'(t)+\frac{1}{2}x''(\varsigma_1)\Delta t)^2+(y'(t)+\frac{1}{2}y''(\varsigma_2)\Delta t)^2]^{-\frac{3}{2}}\notag\\
&\times[(x'(t)+\frac{1}{2}x''(\varsigma_1)\Delta t)(x'(t)+\frac{1}{2}x''(\varsigma_1)\Delta t)(y'(t)+y''(o_2)\Delta t)(y'(t)+y''(o_2)\Delta t)\notag\\&\ \ \ +(y'(t)+\frac{1}{2}y''(\varsigma_2)\Delta t)(y'(t)+\frac{1}{2}y''(\varsigma_2)\Delta t)(x'(t)+x''(o_1)\Delta t)(x'(t)+x''(o_1)\Delta t) ]\notag\\
&+[(x'(t)+\frac{1}{2}x''(\varsigma_1)\Delta t)^2+(y'(t)+\frac{1}{2}y''(\varsigma_2)\Delta t)^2]^{-\frac{1}{2}}\notag\\
&\times
[ (x'(t)+\frac{1}{2}x''(\varsigma_1)\Delta t) x''(t+\Delta t)  +(y'(t)+\frac{1}{2}y''(\varsigma_2)\Delta t) y''(t+\Delta t)]\notag\\
=& -\Delta t^{-1}[(x'(t)+\frac{1}{2}x''(\varsigma_1)\Delta t)^2+(y'(t)+\frac{1}{2}y''(\varsigma_2)\Delta t)^2]^{-\frac{3}{2}}\notag\\&\times
(2x'^2(t)y'^2(t)+x''(\varsigma_1)\Delta t x'(t)y'^2(t)+2x''(o_1)\Delta t x'(t)y'^2(t)\notag\\&\ \ \ +y''(\varsigma_2)\Delta t x'^2(t)y'(t)+2y''(o_2)\Delta tx'^2(t)y'(t)+\cdots)\notag\\
  &+\Delta t^{-1}[(x'(t)+\frac{1}{2}x''(\varsigma_1)\Delta t)^2+(y'(t)+\frac{1}{2}y''(\varsigma_2)\Delta t)^2]^{-\frac{3}{2}}\notag\\
&\times(x'^2(t)y'^2(t) +x''(\varsigma_1)\Delta tx'(t)y'^2(t) +2y''(o_2)\Delta tx'^2(t)y'(t)\notag\\&\ \ \
 +x'^2(t)y'^2(t)+y''(\varsigma_2)\Delta tx'^2(t)y'(t) +2x''(o_1)\Delta t x'(t)y'^2(t) +\cdots)\notag\\
&+[(x'(t)+\frac{1}{2}x''(\varsigma_1)\Delta t)^2+(y'(t)+\frac{1}{2}y''(\varsigma_2)\Delta t)^2]^{-\frac{1}{2}}\notag\\
&\times
[ (x'(t)+\frac{1}{2}x''(\varsigma_1)\Delta t) x''(t+\Delta t)  +(y'(t)+\frac{1}{2}y''(\varsigma_2)\Delta t) y''(t+\Delta t)],\notag
\end{align}
where each $\cdots$ represents the higher-order terms (quadratic and above) of $\Delta t$. Taking the limit of the above expression yields
\begin{align}\lim_{\Delta t\rightarrow 0^+}\frac{ \mathrm{d}^2\mathcal{D}_P}{\mathrm{d}(\Delta t)^2}
=&\lim_{\Delta t\rightarrow 0^+}[(x'(t)+\frac{1}{2}x''(\varsigma_1)\Delta t)^2+(y'(t)+\frac{1}{2}y''(\varsigma_2)\Delta t)^2]^{-\frac{1}{2}}\notag\\
&\times
[ (x'(t)+\frac{1}{2}x''(\varsigma_1)\Delta t) x''(t+\Delta t)  +(y'(t)+\frac{1}{2}y''(\varsigma_2)\Delta t) y''(t+\Delta t)]\notag\\
=&\frac{x'(t)x''(t) +y'(t)y''(t)}{\sqrt{x'^2(t)+y'^2(t)}},\label{xxsecpla21}
\end{align}
which is the desired result. It should be indicated that (\ref{xxsecpla21}) is exactly the derivative of (\ref{xxsecpla16}).

At the end of this subsection, we will discuss about the  uniqueness issue. Here is our  main result.

\begin{theorem}\label{xxsec2.7}  Given the following  two expressions associated with $\overrightarrow{r}(t)=\{x(t),y(t)\}$  on the interval $(t_0,t_1)$:   \begin{align}
\sqrt{x'^2(t)+y'^2(t)},\
\frac{1}{2}|\frac{x'(t)y''(t)-x''(t)y'(t)}{x'^2(t)+ y'^2(t)}|
.\notag\end{align}
If $\overrightarrow{r}_1(t)$\,$(t_0 <t< t_1)$ is a plane curve whose  $\phi(t)$ and $|\overrightarrow{\psi}(t)|$ are given by the above two expressions respectively, then $\overrightarrow{r}_1(t)$ and $\overrightarrow{r}(t)$ coincide on $(t_0,t_1)$ up to position in plane.

\end{theorem}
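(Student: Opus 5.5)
Write $\phi(t)=|\overrightarrow{r}\,'(t)|$ for the speed, which is positive since the curve is regular. Then
$$2|\overrightarrow{\psi}(t)|=\frac{|x'y''-x''y'|}{x'^2+y'^2}=\phi(t)\,|\kappa(t)|,$$
where $\kappa(t)=\frac{x'y''-x''y'}{(x'^2+y'^2)^{3/2}}$ is the signed curvature. The plan is to reduce the statement to the classical fundamental theorem of plane curves: a regular plane curve is determined up to a rigid motion by its speed $\phi(t)$ and its signed curvature $\kappa(t)$. The data of the theorem give $\phi$ and $|\kappa|=2|\overrightarrow{\psi}|/\phi$ for both $\overrightarrow{r}$ and $\overrightarrow{r}_1$. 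So the whole task is to pass from equality of $|\kappa|$ to equality of $\kappa$, or of $-\kappa$, and then integrate.

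\textbf{Step 1 (turning angle).} Since $\overrightarrow{r}\,'\neq 0$, choose a continuous angle $\theta(t)$ with $\overrightarrow{r}\,'(t)=\phi(t)\{\cos\theta(t),\sin\theta(t)\}$. A direct computation gives $x'y''-x''y'=\phi^2\theta'$, so $\theta'(t)=\phi(t)\kappa(t)$ and $|\theta'|=2|\overrightarrow{\psi}|$. Do the same for $\overrightarrow{r}_1$, with angle $\theta_1$. The hypothesis then reads $\phi_1=\phi$ and $|\theta_1'|=|\theta'|$ on $(t_0,t_1)$.

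\textbf{Step 2 (sign issue; the main obstacle).} From $|\theta_1'|=|\theta'|$ we want $\theta_1'=\theta'$ or $\theta_1'=-\theta'$ on the whole interval. This follows from continuity wherever $\theta'$ does not vanish on $(t_0,t_1)$, that is, wherever the curve has no inflection points. At isolated inflection points a genuine counterexample exists: the graphs of $y=t^3$ and $y=|t|^3$ have the same $\phi$ and the same $|\overrightarrow{\psi}|$, yet are not congruent.

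I would therefore first prove the theorem under the assumption $x'y''-x''y'\neq 0$ on $(t_0,t_1)$. In that case $\theta_1'=\pm\theta'$ with a constant sign. If the sign is negative, reflect $\overrightarrow{r}_1$ in a line; this reverses orientation and changes $\theta_1'$ to $-\theta_1'$. Since ``up to position in plane'' can be read as including reflections, this handles both signs. For the general case I would either state that extra hypothesis explicitly or interpret ``coincide up to position'' piecewise on the subintervals where $\theta'\neq 0$.

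\textbf{Step 3 (integration).} With $\theta_1'=\theta'$, fix $\tau\in(t_0,t_1)$. Apply a rotation so that $\theta_1(\tau)=\theta(\tau)$; then $\theta_1\equiv\theta$ on the interval. Hence $\overrightarrow{r}_1\,'=\overrightarrow{r}\,'$, and integrating gives $\overrightarrow{r}_1(t)=\overrightarrow{r}(t)+\overrightarrow{c}$. A translation removes $\overrightarrow{c}$.

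Equivalently, Step 3 can be phrased in the style of Theorem \ref{xxsec2.2}. The unit tangent $\overrightarrow{T}$ satisfies $\overrightarrow{T}\,'=\theta'\,\overrightarrow{N}$, where $\overrightarrow{N}$ is $\overrightarrow{T}$ rotated by $\pi/2$. This is a linear ODE system, so uniqueness of its solutions gives $\overrightarrow{T}_1=\overrightarrow{T}$ once the initial frames agree.
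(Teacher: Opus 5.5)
Your route is the paper's route. The paper's entire proof observes that the hypotheses give $\phi_1=\phi$ and $|x_1'y_1''-x_1''y_1'|/(x_1'^2+y_1'^2)=|x'y''-x''y'|/(x'^2+y'^2)$. It then declares that ``the curvatures of $\overrightarrow{r}_1$, $\overrightarrow{r}$ are equal'' and leaves the appeal to the fundamental theorem of plane curves implicit.

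Those hypotheses only yield $|\kappa_1|=|\kappa|$, whereas the plane uniqueness theorem needs the signed curvature. So the paper's one-line argument silently skips exactly the step you isolate in Step 2, and your objection is correct.

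Your counterexample fits the paper's standing assumptions. $\S$ 2 only asks for simple, regular $C^2$ curves, and $\{t,|t|^3\}$ is $C^2$ with $y''=6|t|$. Both curves have $\phi=\sqrt{1+9t^4}$ and $|\overrightarrow{\psi}|=3|t|/(1+9t^4)$. They agree for $t\ge 0$ on an arc containing three non-collinear points. Hence an isometry $F$ with $F(\overrightarrow{r}(t))=\overrightarrow{r}_1(t)$ would have to be the identity, which is false for $t<0$. The failure is not a regularity artifact: unit-speed curves with signed curvatures $\mathrm{sgn}(s)e^{-1/s^2}$ and $e^{-1/s^2}$ give a $C^\infty$ counterexample.

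Your reading of ``position'' as including reflections is also forced, not optional. The mirror curve $\{x(t),-y(t)\}$ always has the same $\phi$ and $|\overrightarrow{\psi}|$ as $\{x(t),y(t)\}$. Yet it is not obtained from it by a rotation plus translation unless the curve is straight, since that would force $\kappa\equiv-\kappa$.

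So the theorem holds only in the form you propose: $x'y''-x''y'\neq 0$ on $(t_0,t_1)$, with congruence allowed to include reflections. Your Steps 1--3 prove that form completely.
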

\begin{proof} Let  $\overrightarrow{r}_1  =\{x_1(t) ,y_1(t) \}$, $t_0 <t< t_1$. Then
 \begin{align}
 \sqrt{x_1'^2 +y_1'^2 }=\sqrt{x'^2 +y'^2 },\notag
\end{align}
\begin{align}
\frac{1}{2}|\frac{x'_1 y''_1 -x''_1 y'_1 }{x_1'^2  + y_1'^2 }|=
\frac{1}{2}|\frac{x'y'' -x''y' }{x'^2 + y'^2 }|
.\notag\end{align}
These imply that the curvatures of $\overrightarrow{r}_1$, $\overrightarrow{r}$ are equal. This completes the proof.

\end{proof}

The above theorem shows  that  plane curves can be classified by their local first derivative and rotational speed function.

\subsection{Plane lines } Now we will briefly discuss the  behaviour of moving points on plane lines  with rotating frame (\ref{xxsecpla2}) and  (\ref{xxsecpla12}). Give the following straight line:
\begin{equation}\label{xxsecpla22}  \overrightarrow{r} (t)=\{x_0+a t,y_0+b t\}, \ \ \ -\infty <t< +\infty.\end{equation}

\textbf{Note:} We might as well assume (\ref{xxsecpla22}) does not pass through $(0,0)$.

In view of Proposition \ref{xxsec2.1}, the following is now immediate:
\begin{corollary}\label{xxsec2.8} Let $P$ be a moving point on (\ref{xxsecpla22}), and let its parameter be $t\in (-\infty, +\infty)$.
\begin{enumerate}
  \item[$\mathrm{(i)}$]   For $\mathcal{D}=|\overrightarrow{OP}|$, we have
    \begin{align}
\frac{ \mathrm{d}\mathcal{D}}{\mathrm{d}t}=
 & \frac{a(x_0+a t)+b(y_0+b t)}{\sqrt{(x_0+a t)^2+(y_0+b t)^2} }.\notag
\end{align}
\item[$\mathrm{(ii)}$] The rotational speed of $\overrightarrow{OP}$ with respect to $t$ is $ \frac{|b(x_0+a t)-a(y_0+b t)|}{(x_0+a t)^2+(y_0+b t)^2}$.
\end{enumerate}
\end{corollary}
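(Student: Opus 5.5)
This is a direct specialization of Proposition \ref{xxsec2.1}, applied to the line (\ref{xxsecpla22}) with $x(t)=x_0+at$ and $y(t)=y_0+bt$. These functions are $C^\infty$ on all of $\mathbb{R}$, so they are in particular of class $C^2$. By the standing note, the line does not pass through $O(0,0)$, so $x^2(t)+y^2(t)>0$ for every $t$. That is the only hypothesis Proposition \ref{xxsec2.1} needs: it ensures the rotating frame (\ref{xxsecpla2}) and the unit vector $\overrightarrow{e}_{\overrightarrow{OP}}$ are well defined. The open interval $(t_0,t_1)$ in Proposition \ref{xxsec2.1} can be any bounded interval, and every real $t$ lies in one, so the formulas hold on all of $(-\infty,+\infty)$.

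For (i), I will substitute $x'(t)=a$ and $y'(t)=b$ into the formula $\frac{\mathrm{d}\mathcal{D}}{\mathrm{d}t}=\frac{xx'+yy'}{\sqrt{x^2+y^2}}$. This gives the numerator $a(x_0+at)+b(y_0+bt)$ and the denominator $\sqrt{(x_0+at)^2+(y_0+bt)^2}$, as claimed.

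For (ii), I will substitute the same quantities into $\frac{|xy'-x'y|}{x^2+y^2}$. This gives $\frac{|b(x_0+at)-a(y_0+bt)|}{(x_0+at)^2+(y_0+bt)^2}$. Expanding the numerator shows it equals the constant $|bx_0-ay_0|$, which is a useful check on the computation. It also explains why the rotational speed is nonzero precisely when the line misses $O$: if $a,b$ are not both zero, then $bx_0-ay_0=0$ exactly when $O$ lies on the line.

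There is no genuine obstacle here. The only point that needs care is justifying that the hypotheses of Proposition \ref{xxsec2.1} hold on the whole real line rather than on a bounded interval, and the bounded-interval argument above settles it.
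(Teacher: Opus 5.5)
Your proposal is correct and follows the same approach as the paper, which says the corollary is immediate from Proposition \ref{xxsec2.1} by substituting $x(t)=x_0+at$ and $y(t)=y_0+bt$. Your extra check is also correct: the numerator in (ii) reduces to the constant $|bx_0-ay_0|$, which is nonzero exactly because the line misses $O$.
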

As a consequence of  Corollary \ref{xxsec2.8}(i), we have
 \begin{align}
\frac{ \mathrm{d}^2\mathcal{D}}{\mathrm{d}t^2}
  =& -[(x_0+a t)^2+(y_0+b t)^2  ]^{-\frac{3}{2}}[a(x_0+a t)+b(y_0+b t)]^2\notag\\
  &+[(x_0+a t)^2+(y_0+b t)^2  ]^{-\frac{1}{2}}(a^2 + b^2  ) ,\notag
\end{align}
where $-\infty <t< +\infty$.

Next let us consider the following local rotating frame:
\begin{equation}\label{xxsecpla23}
 \{P_0(x_0,y_0); \overrightarrow{e}_1, \overrightarrow{e}_2 \},
\end{equation}
where $\overrightarrow{e}_1= \frac{\{a,b\}}{\sqrt{a^2+b^2}}$, $\overrightarrow{e}_2= \frac{\{-b,a\}}{\sqrt{a^2+b^2}}$. Let the coordinates of  (\ref{xxsecpla23}) be denoted by $\xi_{P_0}$ and  $\eta_{P_0}$. Then the equation of (\ref{xxsecpla22}) near point $P_0$ under (\ref{xxsecpla23}) is
\begin{equation}
  \xi_{P_0}( t)= \sqrt{a^2  t^2+b^2   t^2}, \  \eta_{P_0}( t)=0,\notag
\end{equation}
where $t>0$. Let $Q$ be a point on (\ref{xxsecpla22}) near  $P_0$ with parameter $t$\,($>0$). Then
\begin{enumerate}
  \item[$\mathrm{i)}$] for $\mathcal{D}_{P_0}=|\overrightarrow{P_0Q}|$\,($=\xi_{P_0}(t)$), we have $\frac{ \mathrm{d}^2\mathcal{D}_{P_0} }{\mathrm{d} t^2}
= 0$;
\item[$\mathrm{ii)}$] the rotational speed of $\overrightarrow{P_0Q}$ with respect to $  t$ is $0$.
\end{enumerate}
It is not difficult to see from the above conclusion i) and ii) that  the local second-order derivative and rotational speed at $P_0$\,($t\rightarrow 0^+$) are both zero.

\textbf{Remark.} If the frame involved is (\ref{xxsecpla2}) (or (\ref{xxsecpla9})),  then any moving point on (\ref{xxsecpla22})  may have not only a non-zero second-order derivative but also a non-zero rotational speed.
Therefore, if the origin of the rotating frame is considered as the power source, $P_0$ is undoubtedly the best choice.

\textbf{Finally, we'll provide a brief summary to conclude this section:} We used rotating frames to discuss the  behaviour of moving points on plane curves. The frames used include (\ref{xxsecpla2}), (\ref{xxsecpla9}), and (\ref{xxsecpla12}). The results all indicate that the behaviour of an arbitrary moving point on a plane curve can be seen as the composition of linear motion and rotation. Furthermore, frame (\ref{xxsecpla2}), (\ref{xxsecpla9}), and (\ref{xxsecpla12}) actually correspond to three different ways to form a plane curve. The common feature among them is that
they all possess the property of binary opposition (linear motion and rotation).

\section{The behaviour of moving points on space curves}
\subsection{Space curves with general rotating frames }
Let a simple curve of class $C^2$ in $\mathbb{R}^3$ be given by
\begin{equation}\label{xxsecspa1}
  \overrightarrow{r} (t)=\{x(t),y(t),z(t)\}, \ \ \ t_0 < t<  t_1.
\end{equation}
 Set up a frame at the origin $O$ of $\mathbb{R}^3$ as follows:
\begin{equation}\label{xxsecspa2}
 \{O;\overrightarrow{e}_1(t), \overrightarrow{e}_2(t), \overrightarrow{e}_3(t) \}, \ \ \ t_0 < t<  t_1,
\end{equation}
where $\overrightarrow{e}_1(t)= \frac{\{x(t),y(t),z(t)\}}{\sqrt{x^2(t)+y^2(t)+z^2(t)}}$, $\overrightarrow{e}_i(t)$\,($i=1,2,3$) form a right-handed Cartesian frame. Frame (\ref{xxsecspa2}) is a general rotating frame of (\ref{xxsecspa1}). Let the coordinates be $\xi, \eta, \zeta$. Then the equation of (\ref{xxsecspa1}) under (\ref{xxsecspa2}) is
\begin{equation}\label{xxsecspa3}
  \xi(t)= \sqrt{x^2(t)+y^2(t)+z^2(t) },  \ \  \eta(t)=0, \ \  \zeta(t)=0.
\end{equation}

\textbf{Note:}  In this subsection, by default, $t_0 <t< t_1$ unless otherwise stated, and  (\ref{xxsecspa1}) does not pass through the origin ($O(0,0,0)$) and coordinate axes ($x$, $y$, and $z$-axes).

Recall that any moving point $P$ on (\ref{xxsecpla1}) has linear motion and rotation simultaneously.   Thanks to the establishment of  (\ref{xxsecspa2}), we can say the situation is completely similar for (\ref{xxsecspa1}), that is, the behaviour of any amoving point $P$ on (\ref{xxsecspa1}) can also be seen as the composition of linear motion and rotation. As shown in the proposition below.
\begin{proposition}\label{xxsec3.1}  Let $P$ be a moving point on (\ref{xxsecspa1}), and let its parameter be $t$.
\begin{enumerate}
  \item[$\mathrm{(i)}$]   For $\mathcal{D}=|\overrightarrow{OP}|$, we have
   \begin{align}
\frac{ \mathrm{d}\mathcal{D}}{\mathrm{d}t} =& \frac{x(t)x'(t)+y(t)y'(t)+z(t)z'(t)}{\sqrt{x^2(t)+y^2(t)+z^2(t)}} .\notag
\end{align}
\item[$\mathrm{(ii)}$]  The rotational speed of $\overrightarrow{OP}_{A}=\{x(t),y(t),0\}$ with respect to $t$ is $\frac{|x(t)y'(t)-x'(t)y(t)|}{x^2(t)+y^2(t)}$.
    \item[$\mathrm{(iii)}$] The rotational speed of $\overrightarrow{OP}_{B}=\{x(t),0,z(t)\}$ with respect to $t$ is $\frac{|x(t)z'(t)-x'(t)z(t)|}{x^2(t)+z^2(t)}$.
        \item[$\mathrm{(iv)}$] The rotational speed of  $\overrightarrow{OP}_{C}=\{0,y(t),z(t)\}$ with respect to $t$ is $\frac{|y(t)z'(t)-y'(t)z(t)|}{y^2(t)+z^2(t)}$.
\end{enumerate}
\end{proposition}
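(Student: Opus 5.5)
Part (i) is immediate: by (\ref{xxsecspa3}) we have $\mathcal{D}=|\overrightarrow{OP}|=\xi(t)=\sqrt{x^2(t)+y^2(t)+z^2(t)}$. Differentiating with the chain rule gives
\[
\frac{\mathrm{d}\mathcal{D}}{\mathrm{d}t}=\frac{xx'+yy'+zz'}{\sqrt{x^2+y^2+z^2}},
\]
exactly as in the proof of Proposition \ref{xxsec2.1}(i), with one more coordinate.

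For (ii)--(iv), I will reduce each case to the planar computation already carried out in Proposition \ref{xxsec2.1}(ii). Take (ii). The vector $\overrightarrow{OP}_A=\{x(t),y(t),0\}$ lies in the $xy$-plane. Since the curve avoids the $z$-axis, we have $x^2+y^2\neq 0$, so the unit vector
\[
\overrightarrow{e}_{\overrightarrow{OP}_A}=\frac{\{x,y,0\}}{\sqrt{x^2+y^2}}
\]
is well defined and $C^1$. Its first two components are precisely the vector $\overrightarrow{e}_{\overrightarrow{OP}}$ of Proposition \ref{xxsec2.1} for the plane curve $\{x(t),y(t)\}$, and its third component is identically zero. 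Applying (\ref{xxsecpla4}) componentwise therefore gives
\[
\frac{\mathrm{d}\overrightarrow{e}_{\overrightarrow{OP}_A}}{\mathrm{d}t}=\frac{(xy'-x'y)\{-y,x,0\}}{(x^2+y^2)^{3/2}}.
\]
Its norm is $\frac{|xy'-x'y|}{x^2+y^2}$, which is the claimed rotational speed.

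Cases (iii) and (iv) follow the same way. For (iii), replace the pair $(x,y)$ by $(x,z)$ and use that the curve avoids the $y$-axis, so $x^2+z^2\neq0$. For (iv), use the pair $(y,z)$ and the fact that the curve avoids the $x$-axis, so $y^2+z^2\neq0$. In each case (\ref{xxsecpla4}), applied to the corresponding planar projection, gives the derivative of the unit vector, and its norm is the stated expression.

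The only point requiring care is the well-definedness of the three projected unit vectors. This is exactly why the note preceding the proposition excludes the coordinate axes. Beyond that, the argument is a direct transcription of the planar computation, so I expect no real obstacle.
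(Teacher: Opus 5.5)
Your proposal is correct and takes the same approach as the paper's proof. Part (i) is read off from $\mathcal{D}=\xi(t)$ in (\ref{xxsecspa3}). Parts (ii)--(iv) are obtained by differentiating the unit projected vector as in the planar computation (\ref{xxsecpla4}) and taking its norm, with the standing assumption that the curve avoids the coordinate axes guaranteeing that $x^2+y^2$, $x^2+z^2$, $y^2+z^2$ are nonzero.
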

\begin{proof} (i) It suffices to know that $\mathcal{D}=\sqrt{x^2(t)+y^2(t)+z^2(t) }$\,(equals to $\xi(t)$ in (\ref{xxsecspa3})).

(ii) Clearly, we have$$\overrightarrow{e}_{\overrightarrow{OP}_{A}}= \frac{\{x(t),y(t),0\}}{\sqrt{x^2(t)+y^2(t)}}.$$
Differentiating the above vector function yields
\begin{align}\frac{\mathrm{d}\overrightarrow{e}_{\overrightarrow{OP}_{A}}}{\mathrm{d}t}
 =&-\frac{1}{2}(x^2 +y^2 )^{-\frac{3}{2}}\times 2(x x' +y y' )\{x ,y ,0\}\notag\\&+(x^2 +y^2 )^{-\frac{1}{2}}\{x' , y' ,0\}\notag\\
 =&\frac{(x y' -x'y ) \{ -y ,  x ,0 \}}{(x^2 +y^2 )^{\frac{3}{2}}}\notag.
\end{align}
The specific calculation of the above formula can refer to that of (\ref{xxsecpla4}). Thus,
\begin{align}|\frac{\mathrm{d}\overrightarrow{e}_{\overrightarrow{OP}_{A}}(t)}{\mathrm{d}t}|= \frac{|x y'-x'y|}{x^2+y^2}.\notag\end{align}

(iii)-(iv) The proof is similar to that of (ii).
\end{proof}

 \textbf{Note:} From now on, the three projective vectors of a vector, say $\overrightarrow{OP}$,  onto  $xOy$, $xOz$, and $yOz$ plane are denoted by $\overrightarrow{OP}_{A}$, $\overrightarrow{OP}_{B}$, and $\overrightarrow{OP}_{C}$, respectively. Furthermore, we always assume by default that these projective vectors of coordinate planes are non-zero.

The experience of plane curves tells us that the trajectory  of a moving point on a plane curve can be completely determined by its linear motion and rotation if  given the initial conditions. This suggests the following result.
\begin{theorem}\label{xxsec3.2} Let $O$ be the origin of $\mathbb{R}^3$.   Suppose that $P$ is a moving point in $\mathbb{R}^3$ satisfying the following conditions:
\begin{enumerate}
  \item[$\mathrm{1)}$]   for $\mathcal{D}=|\overrightarrow{OP}|$, we have
  \begin{align}
&\frac{ \mathrm{d}\mathcal{D}}{\mathrm{d}t} = \frac{x(t)x'(t)+y(t)y'(t)+z(t)z'(t)}{\sqrt{x^2(t)+y^2(t)+z^2(t)}},  \notag\\ &\mathcal{D}|_{t=t_0}=\sqrt{x^2(t_0)+y^2(t_0)+z^2(t_0)}; \notag
\end{align}
\item[$\mathrm{2)}$] $\frac{\mathrm{d}\overrightarrow{e}_{\overrightarrow{OP}_{A}}}{\mathrm{d}t}
 =(x(t)y'(t)-x'(t)y(t))\frac{\{ -y(t),  x(t),0 \}}{ (x^2(t)+y^2(t))^{\frac{3}{2}}}$, $\overrightarrow{e}_{\overrightarrow{OP}_{A}}|_{t=t_0}= \frac{\{x(t_0),y(t_0),0\}}{\sqrt{x^2(t_0)+y^2(t_0)}}$;
 \item[$\mathrm{3)}$] $\frac{\mathrm{d}\overrightarrow{e}_{\overrightarrow{OP}_{B}}}{\mathrm{d}t}
 =(x(t)z'(t)-x'(t)z(t))\frac{\{ -z(t),  0,x(t) \}}{ (x^2(t)+z^2(t))^{\frac{3}{2}}}$, $\overrightarrow{e}_{\overrightarrow{OP}_{B}}|_{t=t_0}= \frac{\{x(t_0),0, z(t_0)\}}{\sqrt{x^2(t_0)+z^2(t_0)}}$;
  \item[$\mathrm{4)}$] $\frac{\mathrm{d}\overrightarrow{e}_{\overrightarrow{OP}_{C}}}{\mathrm{d}t}
 =(y(t)z'(t)-y'(t)z(t))\frac{\{ 0, -z(t),y(t) \}}{ (y^2(t)+z^2(t))^{\frac{3}{2}}}$, $\overrightarrow{e}_{\overrightarrow{OP}_{C}}|_{t=t_0}= \frac{\{0,y(t_0), z(t_0)\}}{\sqrt{y^2(t_0)+z^2(t_0)}}$.
\end{enumerate}
Here  $t_0 \leq t\leq t_1$. Then the trajectory  of $P$ is   $$\overrightarrow{r} (t)=\{x(t),y(t),z(t)\}, \ \ \ t_0 \leq t\leq t_1.$$
\end{theorem}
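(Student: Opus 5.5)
The plan follows the proof of Theorem \ref{xxsec2.2}: integrate the data, then use the distance condition to fix the scale. Write $\overrightarrow{OP}(t)=\{f_1(t),f_2(t),f_3(t)\}$ for $t_0\leq t\leq t_1$.

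First I integrate conditions 2)--4). The right-hand side of 2) is exactly $\frac{\mathrm{d}}{\mathrm{d}t}\frac{\{x,y,0\}}{\sqrt{x^2+y^2}}$, by the computation in Proposition \ref{xxsec3.1}(ii), which is (\ref{xxsecpla4}) with a zero third component appended. Since the initial values agree at $t=t_0$, integrating from $t_0$ gives
\begin{align}
\overrightarrow{e}_{\overrightarrow{OP}_{A}}(t)=\frac{\{x(t),y(t),0\}}{\sqrt{x^2(t)+y^2(t)}}.\notag
\end{align}
Hence $(f_1,f_2)=\lambda_A(t)(x,y)$ with $\lambda_A(t)>0$. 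In the same way, 3) gives $(f_1,f_3)=\lambda_B(t)(x,z)$ with $\lambda_B>0$, and 4) gives $(f_2,f_3)=\lambda_C(t)(y,z)$ with $\lambda_C>0$.

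Next I show that the three scalars coincide, and this is the only point needing care. Comparing the $f_1$ components gives $\lambda_A x=\lambda_B x$. Comparing $f_2$ gives $\lambda_A y=\lambda_C y$, and comparing $f_3$ gives $\lambda_B z=\lambda_C z$. Under the standing assumption that the curve meets neither the origin nor the coordinate axes, at each $t$ at least two of $x,y,z$ are nonzero. Suppose $x\neq0$ and $y\neq 0$. Then $\lambda_A=\lambda_B$ and $\lambda_A=\lambda_C$. The other two pairs are handled the same way. So there is a single $\lambda(t)>0$ with
\begin{align}
\{f_1,f_2,f_3\}=\lambda(t)\{x,y,z\}.\notag
\end{align}

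Finally, integrating condition 1) from $t_0$ with the given initial value yields $\mathcal{D}(t)=\sqrt{x^2+y^2+z^2}$, because the right-hand side of 1) is the derivative of this function. On the other hand $\mathcal{D}=\lambda\sqrt{x^2+y^2+z^2}$. The square root is nonzero since the curve avoids $O$, so $\lambda^2=1$, and positivity forces $\lambda\equiv1$. Therefore $\overrightarrow{OP}(t)=\overrightarrow{r}(t)$ on $[t_0,t_1]$.
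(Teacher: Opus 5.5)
Your proposal is correct and follows essentially the same route as the paper's proof. In both, conditions 2)--4) give positive proportionality factors between the projections of $\overrightarrow{OP}$ and those of $\{x,y,z\}$, the three factors are identified with a single $\lambda(t)$, and condition 1) then forces $\lambda\equiv1$; your use of the standing assumption that the curve avoids the coordinate axes (so at least two of $x,y,z$ are nonzero at each $t$) to justify $\lambda_A=\lambda_B=\lambda_C$ supplies a step the paper merely asserts.
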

\begin{proof} Let $\overrightarrow{OP}(t)=\{f_1(t),f_2(t),f_3(t)\}$, $t_0 \leq t\leq t_1$. From condition $2)$, we know that  $f_1=\lambda_1(t)x(t)$, $f_2=\lambda_1(t)y(t)$, $\lambda_1(t)>0 $; from $3)$, $f_1=\lambda_2(t)x(t)$, $f_3=\lambda_2(t)z(t)$, $\lambda_2(t)>0 $; and from
 $4)$, $f_2=\lambda_3(t)y(t)$, $f_3=\lambda_3(t)z(t)$, $\lambda_3(t)>0 $. These imply that  $\lambda_1(t)=\lambda_2(t)=\lambda_3(t)$, which can be written as $\lambda(t)$, $t_0 \leq t\leq  t_1$. But it follows from $1)$ that
 \begin{align}
{f_1}^2(t)+{f_2}^2(t)+{f_3}^2(t) = x^2(t)+y^2(t)+z^2(t)  , \ \ \ t_0 \leq t\leq  t_1. \label{xxsecspa4}
\end{align}
Taking  $f_1=\lambda(t)x(t)$, $f_2=\lambda(t)y(t)$, $f_3=\lambda(t)y(t)$ into (\ref{xxsecspa4}) we deduce that $\lambda(t)=1$.
\end{proof}

From Proposition \ref{xxsec3.1}(i), we obtain
 \begin{align}
\frac{ \mathrm{d}^2\mathcal{D}}{\mathrm{d}t^2}
  =& -(x^2(t)+y^2(t)+z^2(t) )^{-\frac{3}{2}}(x(t)x'(t)+y(t)y'(t)+z(t)z'(t))^2\notag\\
  &+(x^2(t)+y^2(t)+z^2(t) )^{-\frac{1}{2}}\notag\\&\ \ \ \times (x'^2(t)+ x(t)x''(t)+y'^2(t)+y(t)y''(t)+z'^2(t)+z(t)z''(t)) .\notag
\end{align}
Now we can carry over the proof of Theorem \ref{xxsec3.2}, virtually word for word, to get
\begin{theorem}\label{xxsec3.3}  Let $O$ be the origin of $\mathbb{R}^3$.  Suppose that $P$ is a moving point in $\mathbb{R}^3$ satisfying the following conditions:
\begin{enumerate}
  \item[$\mathrm{1)}$]   for $\mathcal{D}=|\overrightarrow{OP}|$, we have
  \begin{align}
\frac{ \mathrm{d}^2\mathcal{D}}{\mathrm{d}t^2}
   =& -(x^2(t)+y^2(t)+z^2(t) )^{-\frac{3}{2}}(x(t)x'(t)+y(t)y'(t)+z(t)z'(t))^2\notag\\
  &+(x^2(t)+y^2(t)+z^2(t) )^{-\frac{1}{2}}\notag\\&\ \ \ \times(x'^2(t)+ x(t)x''(t)+y'^2(t)+y(t)y''(t)+z'^2(t)+z(t)z''(t)), \notag\\
\frac{ \mathrm{d}\mathcal{D}}{\mathrm{d}t}|_{t=t_0}=&\frac{x(t_0)x'(t_0)+y(t_0)y'(t_0)+z(t_0)z'(t_0)}{ \sqrt{x^2(t_0)+y^2(t_0)+z^2(t_0)} }, \notag\\ \mathcal{D}|_{t=t_0}=&\sqrt{x^2(t_0)+y^2(t_0)+z^2(t_0)}; \notag
\end{align}
\item[$\mathrm{2)}$] $\frac{\mathrm{d}\overrightarrow{e}_{\overrightarrow{OP}_{A}}}{\mathrm{d}t}
 =(x(t)y'(t)-x'(t)y(t))\frac{\{ -y(t),  x(t),0 \}}{ (x^2(t)+y^2(t))^{\frac{3}{2}}}$, $\overrightarrow{e}_{\overrightarrow{OP}_{A}}|_{t=t_0}= \frac{\{x(t_0),y(t_0),0\}}{\sqrt{x^2(t_0)+y^2(t_0)}}$;
 \item[$\mathrm{3)}$] $\frac{\mathrm{d}\overrightarrow{e}_{\overrightarrow{OP}_{B}}}{\mathrm{d}t}
 =(x(t)z'(t)-x'(t)z(t))\frac{\{ -z(t),  0,x(t) \}}{ (x^2(t)+z^2(t))^{\frac{3}{2}}}$, $\overrightarrow{e}_{\overrightarrow{OP}_{B}}|_{t=t_0}= \frac{\{x(t_0),0, z(t_0)\}}{\sqrt{x^2(t_0)+z^2(t_0)}}$;
  \item[$\mathrm{4)}$] $\frac{\mathrm{d}\overrightarrow{e}_{\overrightarrow{OP}_{C}}}{\mathrm{d}t}
 =(y(t)z'(t)-y'(t)z(t))\frac{\{ 0, -z(t),y(t) \}}{ (y^2(t)+z^2(t))^{\frac{3}{2}}}$, $\overrightarrow{e}_{\overrightarrow{OP}_{C}}|_{t=t_0}= \frac{\{0,y(t_0), z(t_0)\}}{\sqrt{y^2(t_0)+z^2(t_0)}}$.
\end{enumerate}
Here $t_0 \leq t\leq t_1$. Then the trajectory of $P$ is   $$\overrightarrow{r} (t)=\{x(t),y(t),z(t)\}, \ \ \ t_0 \leq t\leq t_1.$$
\end{theorem}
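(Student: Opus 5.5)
The plan is to reduce Theorem \ref{xxsec3.3} to Theorem \ref{xxsec3.2}. Conditions 2)--4) are identical in the two theorems, so only condition 1) needs work. The second-order condition in 1) must be converted into the first-order one.

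\textbf{Step 1.} Put $F(t)=\sqrt{x^2(t)+y^2(t)+z^2(t)}$. This is positive and $C^2$ on $[t_0,t_1]$, since the curve avoids $O$. The computation preceding the theorem, obtained by differentiating Proposition \ref{xxsec3.1}(i), shows that the right-hand side of the second-order equation in 1) equals $F''(t)$. The data in 1) also give $F'(t_0)$ and $F(t_0)$ as initial values. Hence $\mathcal{D}-F$ satisfies $(\mathcal{D}-F)''=0$ on $[t_0,t_1]$, with $(\mathcal{D}-F)(t_0)=0$ and $(\mathcal{D}-F)'(t_0)=0$.

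\textbf{Step 2.} Integrating twice gives $\mathcal{D}\equiv F$ on $[t_0,t_1]$. Equivalently, this is uniqueness for the linear initial value problem $u''=g(t)$. Differentiating once then shows that $\mathcal{D}$ satisfies the first-order equation and initial condition of Theorem \ref{xxsec3.2} 1).

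\textbf{Step 3.} Now all hypotheses of Theorem \ref{xxsec3.2} hold, and we repeat its argument. Write $\overrightarrow{OP}=\{f_1,f_2,f_3\}$. Conditions 2)--4), together with their initial values, force each unit projected direction to equal the corresponding direction of $\overrightarrow{r}$. This gives $f_1=\lambda_1x$, $f_2=\lambda_1y$, then $f_1=\lambda_2x$, $f_3=\lambda_2z$, then $f_2=\lambda_3y$, $f_3=\lambda_3z$, with every $\lambda_i>0$. Since the curve avoids the coordinate axes, at least two of $x,y,z$ are nonzero at each $t$, so $\lambda_1=\lambda_2=\lambda_3=:\lambda$. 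Then $f_1^2+f_2^2+f_3^2=\mathcal{D}^2=F^2$ yields $\lambda^2=1$, hence $\lambda=1$ and $\overrightarrow{OP}=\overrightarrow{r}(t)$.

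\textbf{Main obstacle.} The delicate point is justifying the first half of Step 3: that a unit vector function satisfying the given ODE with the given initial value must coincide with $\overrightarrow{e}_{\overrightarrow{OP}_A}$, and similarly for $B$ and $C$. The right-hand sides depend only on $t$, not on the unknown. So integrating from $t_0$ shows that the unknown direction equals $\{x,y,0\}/\sqrt{x^2+y^2}$, because the two functions share the same derivative and the same initial value. The same argument applies to $B$ and $C$, and it is the step I would state explicitly to make the argument rigorous.
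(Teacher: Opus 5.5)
Your proposal is correct and follows essentially the paper's route. The paper just says the proof of Theorem \ref{xxsec3.2} carries over virtually word for word, which is exactly your reduction of condition 1) to $\mathcal{D}\equiv\sqrt{x^2+y^2+z^2}$ followed by the $\lambda_1=\lambda_2=\lambda_3=1$ argument; you additionally make explicit what the paper leaves implicit (integrating twice with both initial values, integrating the direction equations 2)--4), and using avoidance of the coordinate axes to equate the $\lambda_i$).
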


\subsection{Space curves with local rotating frames}
Let $P(x(t),y(t),z(t))$ be a point on (\ref{xxsecspa1}). Set up a frame at $P$ as follows:
\begin{equation}\label{xxsecspa5}
 \{P(t);\overrightarrow{e}_1(\Delta t), \overrightarrow{e}_2(\Delta t), \overrightarrow{e}_3(\Delta t) \}, \ \ \ t_0 < t< t_1,
\end{equation}
where $\overrightarrow{e}_1(\Delta t)= \frac{\{x(t+\Delta t)-x(t),y(t+\Delta t)-y(t),z(t+\Delta t)-z(t)\}}{\sqrt{(x(t+\Delta t)-x(t))^2+(y(t+\Delta t)-y(t))^2+(z(t+\Delta t)-z(t))^2}}$ and $\overrightarrow{e}_i(\Delta t)$\,($i=1,2,3$)  form a right-handed Cartesian frame, and where $\Delta t>0$. Frame (\ref{xxsecspa5}) is a local rotating frame of (\ref{xxsecspa1}). Let the coordinates of (\ref{xxsecspa5}) be $\xi_P, \eta_P, \zeta_P$. Then the equation of (\ref{xxsecspa1}) near point $P$ under (\ref{xxsecspa5}) can be written as
\begin{equation}\label{xxsecspa6}
  \xi_P(\Delta t)= \sqrt{f_1^2(\Delta t)+f_2^2(\Delta t)+f_3^2(\Delta t) }, \  \eta_P(\Delta t)=0, \  \zeta_P(\Delta t)=0,
\end{equation}
where $f_1(\Delta t)=x(t+\Delta t)-x(t)$, $f_2(\Delta t)=y(t+\Delta t)-y(t)$, $f_3(\Delta t)=z(t+\Delta t)-z(t)$, and where $\Delta t>0$.

\textbf{Note:}
 In this subsection, we assume by default that $t_0 <t< t_1$, and $\Delta t>0$  is small enough;  we also assume that (\ref{xxsecspa1}) is of class $C^3$, and $\overrightarrow{r}'(t)\wedge\overrightarrow{r}''(t)\cdot\overrightarrow{r}'''(t)\neq 0$, $t_0 <t< t_1$. Here $\wedge$ and $\cdot$ are cross product and dot product of vectors, respectively, and will  be adopted in the following.

Let us first discuss the linear motion of the point $Q$ on (\ref{xxsecspa1}) near point $P$. The relevant result is as follows.
\begin{proposition}\label{xxsec3.4}  Let $Q(\Delta t)$ be a point on (\ref{xxsecspa1}) near  $P$, and let $\mathcal{D}_P=|\overrightarrow{PQ}|$. Then we have
 \begin{align} \frac{ \mathrm{d}\mathcal{D}_P }{\mathrm{d} \Delta t}
=&
\frac{f_1(\Delta t)x'(t+\Delta t)  +f_2(\Delta t)   y'(t+\Delta t) +f_3(\Delta t)   z'(t+\Delta t) }{\sqrt{f_1^2(\Delta t) +f_2^2(\Delta t) +f_3^2(\Delta t)}} ,\ \ \ \Delta t>0;\notag\end{align}
 \begin{align} \mathcal{D}_{P+}'(0)
 =\sqrt{x'^2(t)+y'^2(t)+z'^2(t)}. \notag\end{align}
\end{proposition}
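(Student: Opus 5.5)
The plan mirrors the proof of Proposition \ref{xxsec2.6}(i), with one more coordinate. By (\ref{xxsecspa6}), $\mathcal{D}_P=\xi_P(\Delta t)=\sqrt{f_1^2(\Delta t)+f_2^2(\Delta t)+f_3^2(\Delta t)}$.

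For $\Delta t>0$, I differentiate directly with the chain rule. Since $f_1'(\Delta t)=x'(t+\Delta t)$, $f_2'(\Delta t)=y'(t+\Delta t)$ and $f_3'(\Delta t)=z'(t+\Delta t)$, this gives
\begin{align}
\frac{\mathrm{d}\mathcal{D}_P}{\mathrm{d}\Delta t}=\frac{1}{2}(f_1^2+f_2^2+f_3^2)^{-\frac{1}{2}}\cdot 2(f_1f_1'+f_2f_2'+f_3f_3'),\notag
\end{align}
which is the first formula. It is legitimate because the curve is simple and regular, so $f_1^2+f_2^2+f_3^2>0$ for small $\Delta t>0$.

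For the one-sided derivative at $0$, I use the Taylor expansions (\ref{xxsecpla14}), (\ref{xxsecpla15}) together with the analogous one for $z$:
\begin{align}
z(t+\Delta t)-z(t)=z'(t)\Delta t+\tfrac{1}{2}z''(\varsigma_3)\Delta t^2,\notag
\end{align}
with $\varsigma_3$ between $t$ and $t+\Delta t$. This is valid since the curve is at least $C^2$. Factoring $\Delta t>0$ out of the square root gives
\begin{align}
\mathcal{D}_P=\Delta t\sqrt{\sum_{i=1}^3\big(u_i'(t)+\tfrac{1}{2}u_i''(\varsigma_i)\Delta t\big)^2},\notag
\end{align}
where $(u_1,u_2,u_3)=(x,y,z)$. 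Since $\mathcal{D}_P(0)=0$, the difference quotient $\mathcal{D}_P(\Delta t)/\Delta t$ equals this square root. The second derivatives are bounded near $t$, so letting $\Delta t\to 0^+$ gives $\sqrt{x'^2(t)+y'^2(t)+z'^2(t)}$, as in (\ref{xxsecpla16}).

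The only delicate point is justifying the limit. This needs boundedness of $u_i''$ near $t$, which follows from continuity of the second derivatives, and continuity of the square root. Otherwise the argument is routine.
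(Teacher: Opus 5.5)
Your proposal is correct and matches the paper's proof step for step. For $\Delta t>0$ you differentiate $\mathcal{D}_P=\xi_P(\Delta t)=\sqrt{f_1^2+f_2^2+f_3^2}$ by the chain rule. For the one-sided derivative you use the Lagrange-remainder Taylor expansion in each coordinate and factor $\Delta t$ out of the root to obtain $\mathcal{D}_{P+}'(0)=\sqrt{x'^2(t)+y'^2(t)+z'^2(t)}$. Your extra justifications, namely $f_1^2+f_2^2+f_3^2>0$ for small $\Delta t>0$ and boundedness of the second derivatives near $t$, only make explicit what the paper leaves implicit.
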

\begin{proof} The case where  $\Delta t>0$ is immediately from $$\mathcal{D}_P= \sqrt{f_1^2(\Delta t)+f_2^2(\Delta t)+f_3^2(\Delta t) },$$ which equals to $\xi_P(\Delta t)$ in (\ref{xxsecspa6}).

Next let us consider $\Delta t\rightarrow 0^+$. Using Taylor's formula, we obtain
\begin{align}\mathcal{D}_P&=
\sqrt{(x'(t)\Delta t+\frac{1}{2}x''(\varsigma_1)\Delta t^2)^2+(y'(t)\Delta t +\frac{1}{2}y''(\varsigma_2)\Delta t^2)^2+(z'(t) \Delta t +\frac{1}{2}z''(\varsigma_3)\Delta t^2)^2}\notag\\
&=
\Delta t\sqrt{(x'(t)+\frac{1}{2}x''(\varsigma_1)\Delta t)^2+(y'(t) +\frac{1}{2}y''(\varsigma_2)\Delta t)^2+(z'(t) +\frac{1}{2}z''(\varsigma_3)\Delta t)^2}\notag, \end{align}
where $\varsigma_1, \varsigma_2, \varsigma_3$ are between $t$ and $t+\Delta t$, and where $\Delta t>0$. Thus,
\begin{align}\mathcal{D}_{P+}'(0)
 =\lim_{\Delta t\rightarrow 0^+}\frac{\mathcal{D}_P(\Delta t)-\mathcal{D}_P(0)}{\Delta t}=\sqrt{x'^2(t)+y'^2(t)+z'^2(t)},\label{xxsecspa7}
\end{align}
where $\mathcal{D}_P(0)=0$.

\end{proof}

\textbf{Note:} We call (\ref{xxsecspa7}) the local first derivative of  (\ref{xxsecspa1}) and denote it by $\phi(t)$\,(following the same convention as planar case in $\S$ 2.4).

 Let  $Q$ be as above. We now turn to discussing aspect concerning the projective rotational speed of $\overrightarrow{PQ}$. For this purpose, in addition to (\ref{xxsecspa5}), we construct another frame:
\begin{equation}\label{xxsecspa8}
 \{P;\overrightarrow{r}'(t),\overrightarrow{r}''(t),\overrightarrow{r}'''(t) \}, \ \ \ t_0 < t< t_1.
\end{equation}
Then the equation of  (\ref{xxsecspa1}) can be written as
\begin{align}&\overrightarrow{r}(t+\Delta t)-\overrightarrow{r}(t)\notag\\=&g_1(\Delta t)\overrightarrow{r}'(t)+g_2(\Delta t)\overrightarrow{r}''(t)+g_3(\Delta t)\overrightarrow{r}'''(t),\label{xxsecspa9}
\end{align}
where $\Delta t>0$.

 Now we can project $\overrightarrow{PQ}$ to the three frame planes of (\ref{xxsecspa8}) and compute the corresponding rotational speed. Let us denote by $\overrightarrow{PQ}_{12}$ the projective vector of $\overrightarrow{PQ}$ onto the plane spanned by $\overrightarrow{r}'(t),\overrightarrow{r}''(t)$, $\overrightarrow{PQ}_{13}$ the  projective vector of $\overrightarrow{PQ}$ onto the plane of  $\overrightarrow{r}'(t),\overrightarrow{r}'''(t)$, and $\overrightarrow{PQ}_{23}$ the  projective vector of $\overrightarrow{PQ}$ onto  the plane of $\overrightarrow{r}''(t),\overrightarrow{r}'''(t)$. Assuming that  $\overrightarrow{PQ}_{12}$ , $\overrightarrow{PQ}_{13}$ , and $\overrightarrow{PQ}_{23}$  are non-zero vectors, then we have
\begin{proposition}\label{xxsec3.5}  Let  $Q$ be a moving point on (\ref{xxsecspa9}) near $P$, and let its parameter be $\Delta t$.
\begin{enumerate}
 \item[$\mathrm{1)}$] The rotational speed of $\overrightarrow{PQ}_{12}$ with respect to $\Delta t$ is
 \begin{align}&[(g_1(\Delta t)\overrightarrow{r}'(t)+g_2(\Delta t)\overrightarrow{r}''(t))^2]^{-\frac{3}{2}}\notag\\
&\times|g_1^2(\Delta t)g_2'(\Delta t)\overrightarrow{r}'(t)\cdot\overrightarrow{r}''(t)\overrightarrow{r}'(t) +g_2(\Delta t)g_2'(\Delta t)g_1(\Delta t)\overrightarrow{r}''^2(t)\overrightarrow{r}'(t)\notag\\
&\ \ \  -g_1(\Delta t)g_2(\Delta t)g_1'(\Delta t)\overrightarrow{r}'(t)\cdot\overrightarrow{r}''(t)\overrightarrow{r}'(t)-g_2^2(\Delta t)g_1'(\Delta t)\overrightarrow{r}''^2(t)\overrightarrow{r}'(t)\notag\\
&\ \ \ +g_1(\Delta t)g_1'(\Delta t)g_2(\Delta t)\overrightarrow{r}'^2(t)\overrightarrow{r}''(t)+g_2^2(\Delta t)g_1'(\Delta t)\overrightarrow{r}'(t)\cdot\overrightarrow{r}''(t)\overrightarrow{r}''(t)  \notag\\
&\ \ \ -g_1^2(\Delta t)g_2'(\Delta t)\overrightarrow{r}'^2(t)\overrightarrow{r}''(t)\notag\\
&\ \ \ -g_1(\Delta t)g_2(\Delta t)g_2'(\Delta t)\overrightarrow{r}'(t)\cdot\overrightarrow{r}''(t)\overrightarrow{r}''(t)|
, \ \ \Delta t>0 ;\notag\\
 &\frac{|-
   \overrightarrow{r}'(t)\cdot\overrightarrow{r}''(t)\overrightarrow{r}'(t)
   +\overrightarrow{r}'^2(t)\overrightarrow{r}''(t)|}{2|\overrightarrow{r}'(t)|^{3}}
   ,\ \  \Delta t\rightarrow 0^+.\notag\end{align}
\item[$\mathrm{2)}$] The rotational speed of $\overrightarrow{PQ}_{13}$ with respect to $\Delta t$ is
 \begin{align}&[(g_1(\Delta t)\overrightarrow{r}'(t)+g_3(\Delta t)\overrightarrow{r}'''(t))^2]^{-\frac{3}{2}}\notag\\
&\times|g_1^2(\Delta t)g_3'(\Delta t)\overrightarrow{r}'(t)\cdot\overrightarrow{r}'''(t)\overrightarrow{r}'(t) +g_3(\Delta t)g_3'(\Delta t)g_1(\Delta t)\overrightarrow{r}'''^2(t)\overrightarrow{r}'(t)\notag\\
&\ \ \  -g_1(\Delta t)g_3(\Delta t)g_1'(\Delta t)\overrightarrow{r}'(t)\cdot\overrightarrow{r}'''(t)\overrightarrow{r}'(t)-g_3^2(\Delta t)g_1'(\Delta t)\overrightarrow{r}'''^2(t)\overrightarrow{r}'(t)\notag\\
&\ \ \ +g_1(\Delta t)g_1'(\Delta t)g_3(\Delta t)\overrightarrow{r}'^2(t)\overrightarrow{r}'''(t)+g_3^2(\Delta t)g_1'(\Delta t)\overrightarrow{r}'(t)\cdot\overrightarrow{r}'''(t)\overrightarrow{r}'''(t)  \notag\\
&\ \ \ -g_1^2(\Delta t)g_3'(\Delta t)\overrightarrow{r}'^2(t)\overrightarrow{r}'''(t) \notag\\
&\ \ \ -g_1(\Delta t)g_3(\Delta t)g_3'(\Delta t)\overrightarrow{r}'(t)\cdot\overrightarrow{r}'''(t)\overrightarrow{r}'''(t)|, \ \ \Delta t>0 ;\notag\\
 &0, \ \ \Delta t\rightarrow 0^+ .\notag\end{align}
\item[$\mathrm{3)}$] The rotational speed of $\overrightarrow{PQ}_{23}$ with respect to $\Delta t$ is
  \begin{align}&[(g_2(\Delta t)\overrightarrow{r}''(t)+g_3(\Delta t)\overrightarrow{r}'''(t))^2]^{-\frac{3}{2}}\notag\\
&\times|g_2^2(\Delta t)g_3'(\Delta t)\overrightarrow{r}''(t)\cdot\overrightarrow{r}'''(t)\overrightarrow{r}''(t) +g_3(\Delta t)g_3'(\Delta t)g_2(\Delta t)\overrightarrow{r}'''^2(t)\overrightarrow{r}''(t)\notag\\
&\ \ \  -g_2(\Delta t)g_3(\Delta t)g_2'(\Delta t)\overrightarrow{r}''(t)\cdot\overrightarrow{r}'''(t)\overrightarrow{r}''(t)-g_3^2(\Delta t)g_2'(\Delta t)\overrightarrow{r}'''^2(t)\overrightarrow{r}''(t)\notag\\
&\ \ \ +g_2(\Delta t)g_2'(\Delta t)g_3(\Delta t)\overrightarrow{r}''^2(t)\overrightarrow{r}'''(t)+g_3^2(\Delta t)g_2'(\Delta t)\overrightarrow{r}''(t)\cdot\overrightarrow{r}'''(t)\overrightarrow{r}'''(t)  \notag\\
&\ \ \ -g_2^2(\Delta t)g_3'(\Delta t)\overrightarrow{r}''^2(t)\overrightarrow{r}'''(t) \notag\\
&\ \ \ -g_2(\Delta t)g_3(\Delta t)g_3'(\Delta t)\overrightarrow{r}''(t)\cdot\overrightarrow{r}'''(t)\overrightarrow{r}'''(t)|, \ \ \Delta t>0;\notag\\
 &
 \frac{|-
   \overrightarrow{r}''(t)\cdot\overrightarrow{r}'''(t)\overrightarrow{r}''(t)
   +\overrightarrow{r}''^2(t)\overrightarrow{r}'''(t)|}{3|\overrightarrow{r}''(t)|^{3}},\ \  \Delta t\rightarrow 0^+ .\notag\end{align}
\end{enumerate}
\end{proposition}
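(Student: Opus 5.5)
The plan is to treat each of $\overrightarrow{PQ}_{12},\overrightarrow{PQ}_{13},\overrightarrow{PQ}_{23}$ as a planar vector $\overrightarrow{v}(\Delta t)=g_i(\Delta t)\overrightarrow{a}+g_j(\Delta t)\overrightarrow{b}$, where $\overrightarrow{a},\overrightarrow{b}$ are the two fixed frame vectors of (\ref{xxsecspa8}). I would then repeat the computation of (\ref{xxsecpla4}) in vector form. For the unit vector $\overrightarrow{e}=\overrightarrow{v}/|\overrightarrow{v}|$ one has
\begin{align}
\frac{\mathrm{d}\overrightarrow{e}}{\mathrm{d}\Delta t}=\frac{\overrightarrow{v}'(\overrightarrow{v}\cdot\overrightarrow{v})-(\overrightarrow{v}\cdot\overrightarrow{v}')\overrightarrow{v}}{|\overrightarrow{v}|^{3}}.\notag
\end{align}
Substituting $\overrightarrow{v}'=g_i'\overrightarrow{a}+g_j'\overrightarrow{b}$ and expanding all dot products gives exactly the eight listed terms, which settles the case $\Delta t>0$.

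For the limits I would first simplify the numerator with the identity $\overrightarrow{v}'(\overrightarrow{v}\cdot\overrightarrow{v})-(\overrightarrow{v}\cdot\overrightarrow{v}')\overrightarrow{v}=(\overrightarrow{v}\wedge\overrightarrow{v}')\wedge\overrightarrow{v}$. Since $\overrightarrow{v}\wedge\overrightarrow{v}'=c(\Delta t)\,\overrightarrow{a}\wedge\overrightarrow{b}$ with $c=g_ig_j'-g_jg_i'$, the numerator becomes
\begin{align}
c(\Delta t)\,(\overrightarrow{a}\wedge\overrightarrow{b})\wedge\overrightarrow{v}.\notag
\end{align}

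The asymptotics of $g_k$ come from Taylor's formula, valid because the curve is of class $C^3$:
\begin{align}
\overrightarrow{r}(t+\Delta t)-\overrightarrow{r}(t)=\Delta t\,\overrightarrow{r}'+\tfrac{1}{2}\Delta t^2\overrightarrow{r}''+\tfrac{1}{6}\Delta t^3\overrightarrow{r}'''+o(\Delta t^3).\notag
\end{align}
Because $\overrightarrow{r}'\wedge\overrightarrow{r}''\cdot\overrightarrow{r}'''\neq0$, the coordinates in (\ref{xxsecspa9}) are unique and are fixed linear combinations of the components of $\overrightarrow{r}(t+\Delta t)-\overrightarrow{r}(t)$. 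Hence
\begin{align}
g_1=\Delta t+o(\Delta t^3),\quad g_2=\tfrac12\Delta t^2+o(\Delta t^3),\quad g_3=\tfrac16\Delta t^3+o(\Delta t^3).\notag
\end{align}
Differentiating the coordinate relation gives $g_k'$ as the same linear combinations of $\overrightarrow{r}'(t+\Delta t)=\overrightarrow{r}'+\Delta t\,\overrightarrow{r}''+\tfrac12\Delta t^2\overrightarrow{r}'''+o(\Delta t^2)$, so
\begin{align}
g_1'=1+o(\Delta t^2),\quad g_2'=\Delta t+o(\Delta t^2),\quad g_3'=\tfrac12\Delta t^2+o(\Delta t^2).\notag
\end{align}

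With these, the three cases go as follows.
\begin{enumerate}
\item[1)] For $\overrightarrow{PQ}_{12}$: $c=\Delta t^2-\tfrac12\Delta t^2+o(\Delta t^2)=\tfrac12\Delta t^2+o(\Delta t^2)$ and $\overrightarrow{v}=\Delta t\,\overrightarrow{r}'+O(\Delta t^2)$. The quotient therefore tends to
\begin{align}
\frac{|(\overrightarrow{r}'\wedge\overrightarrow{r}'')\wedge\overrightarrow{r}'|}{2|\overrightarrow{r}'|^3},\qquad (\overrightarrow{r}'\wedge\overrightarrow{r}'')\wedge\overrightarrow{r}'=\overrightarrow{r}'^2\overrightarrow{r}''-(\overrightarrow{r}'\cdot\overrightarrow{r}'')\overrightarrow{r}',\notag
\end{align}
which is the stated value.
\item[2)] For $\overrightarrow{PQ}_{13}$: $c=\tfrac12\Delta t^3-\tfrac16\Delta t^3+o(\Delta t^3)=\tfrac13\Delta t^3+o(\Delta t^3)$ and $|\overrightarrow{v}|\sim\Delta t|\overrightarrow{r}'|$. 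The quotient is then $O(\Delta t^3\cdot\Delta t/\Delta t^3)=O(\Delta t)\to0$.
\item[3)] For $\overrightarrow{PQ}_{23}$: $\overrightarrow{v}=\tfrac12\Delta t^2\overrightarrow{r}''+O(\Delta t^3)$, so $|\overrightarrow{v}|^3\sim\tfrac18\Delta t^6|\overrightarrow{r}''|^3$. Also $c=g_2g_3'-g_3g_2'=\tfrac14\Delta t^4-\tfrac16\Delta t^4+o(\Delta t^4)=\tfrac1{12}\Delta t^4+o(\Delta t^4)$. The numerator is therefore $\tfrac1{24}\Delta t^6(\overrightarrow{r}''\wedge\overrightarrow{r}''')\wedge\overrightarrow{r}''+o(\Delta t^6)$, and the ratio $\tfrac{1/24}{1/8}=\tfrac13$ gives the stated limit.
\end{enumerate}

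The main obstacle is the bookkeeping of remainders in case 3). There the leading terms of $g_2g_3'$ and $g_3g_2'$ partially cancel at order $\Delta t^4$. One must therefore check that every remainder in $g_2,g_3,g_2',g_3'$ really is of the order claimed above, using only $C^3$ regularity rather than a fourth-order expansion. I would handle this by working with the exact coordinate map (the inverse of the matrix of $\overrightarrow{r}',\overrightarrow{r}'',\overrightarrow{r}'''$) applied to the integral form of the Taylor remainder, rather than with Lagrange intermediate points as in the planar proof.
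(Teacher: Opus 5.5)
Your proposal is correct, but for the $\Delta t\to0^+$ values it takes a different route from the paper.

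The paper never uses the $\Delta t>0$ formula to get the limits. It writes the Taylor remainder as $\frac16\overrightarrow{\varepsilon}\Delta t^3$ and splits $\overrightarrow{\varepsilon}=\varepsilon_1\overrightarrow{r}'+\varepsilon_2\overrightarrow{r}''+\varepsilon_3\overrightarrow{r}'''$. After dividing out the leading power of $\Delta t$, it evaluates the one-sided difference quotient $\lim_{\Delta t\to0^+}\frac{\overrightarrow{e}(\Delta t)-\overrightarrow{e}(0)}{\Delta t}$ directly, using conjugate multiplication. That approach needs asymptotics of $g_k$ only, never of $g_k'$.

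You instead take the limit of the exact derivative. You first collapse the eight-term numerator to $c(\Delta t)\,(\overrightarrow{a}\wedge\overrightarrow{b})\wedge\overrightarrow{v}$ with $c=g_ig_j'-g_jg_i'$. This costs the extra expansions of $g_k'$, which are legitimate under $C^3$ via the Peano form of Taylor's formula for $\overrightarrow{r}'$. In return it is much shorter and ties the two halves of each item together: the limiting value is genuinely the limit of the $\Delta t>0$ expression. It also explains the constants $\frac12$, $0$, $\frac13$ as ratios of the leading coefficients of $c$ and $|\overrightarrow{v}|^3$.

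Two remarks.

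First, the paper's limiting value, like $\overrightarrow{\psi}$ in the planar case, is the right derivative of $\overrightarrow{e}$ at $\Delta t=0$. To match it, note that your computation actually produces a vector limit of $\frac{\mathrm{d}\overrightarrow{e}}{\mathrm{d}\Delta t}$, not just of its norm. Also, $\overrightarrow{e}$ extends continuously to $0$ (with value $\overrightarrow{r}'/|\overrightarrow{r}'|$ or $\overrightarrow{r}''/|\overrightarrow{r}''|$). The mean value theorem applied componentwise then shows that the right derivative exists and equals your limit.

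Second, the obstacle you flag in case 3) is not real. Your displayed estimates for $g_2,g_3,g_2',g_3'$ already follow by applying the fixed linear coordinate map to the Peano-form expansions. Since $\frac14-\frac16=\frac1{12}\neq0$, the cancellation is only partial, so no integral form of the remainder is needed.
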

\begin{proof}
1) In view of (\ref{xxsecspa9}), we can write $$\overrightarrow{e}_{\overrightarrow{PQ}_{12}} = \frac{g_1(\Delta t)\overrightarrow{r}'(t)+g_2(\Delta t)\overrightarrow{r}''(t)}{\sqrt{(g_1(\Delta t)\overrightarrow{r}'(t)+g_2(\Delta t)\overrightarrow{r}''(t))^2 }},$$
where $\Delta t>0$. It is sufficient for us to  compute
\begin{align}&\frac{\mathrm{d}\overrightarrow{e}_{\overrightarrow{PQ}_{12}} }{\mathrm{d}\Delta t}\notag\\
 =&-\frac{1}{2}[(g_1(\Delta t)\overrightarrow{r}'(t)+g_2(\Delta t)\overrightarrow{r}''(t))^2]^{-\frac{3}{2}}\notag\\
&\times2(g_1(\Delta t)\overrightarrow{r}'(t)+g_2(\Delta t)\overrightarrow{r}''(t) )\cdot
(g_1'(\Delta t)\overrightarrow{r}'(t)+g_2'(\Delta t)\overrightarrow{r}''(t) )\notag\\
&\times(g_1(\Delta t)\overrightarrow{r}'(t)+g_2(\Delta t)\overrightarrow{r}''(t) )\notag\\&+[(g_1(\Delta t)\overrightarrow{r}'(t)+g_2(\Delta t)\overrightarrow{r}''(t))^2]^{-\frac{1}{2}}(g_1'(\Delta t)\overrightarrow{r}'(t)+g_2'(\Delta t)\overrightarrow{r}''(t))\notag\\
  =&-[(g_1(\Delta t)\overrightarrow{r}'(t)+g_2(\Delta t)\overrightarrow{r}''(t))^2]^{-\frac{3}{2}}\notag\\
&\times[(g_1(\Delta t)\overrightarrow{r}'(t)+g_2(\Delta t)\overrightarrow{r}''(t) )\cdot
(g_1'(\Delta t)\overrightarrow{r}'(t)+g_2'(\Delta t)\overrightarrow{r}''(t) )\notag\\
&\times(g_1(\Delta t)\overrightarrow{r}'(t)+g_2(\Delta t)\overrightarrow{r}''(t) )\notag\\
&-(g_1(\Delta t)\overrightarrow{r}'(t)+g_2(\Delta t)\overrightarrow{r}''(t))^2(g_1'(\Delta t)\overrightarrow{r}'(t)+g_2'(\Delta t)\overrightarrow{r}''(t))]\notag\\
  =&-[(g_1(\Delta t)\overrightarrow{r}'(t)+g_2(\Delta t)\overrightarrow{r}''(t))^2]^{-\frac{3}{2}}\notag\\
&\times[(g_1(\Delta t)g_1'(\Delta t)\overrightarrow{r}'^2(t)+g_1(\Delta t)g_2'(\Delta t)\overrightarrow{r}'(t)\cdot\overrightarrow{r}''(t) \notag\\&\ \ \  +g_2(\Delta t)g_1'(\Delta t)\overrightarrow{r}''(t)\cdot\overrightarrow{r}'(t) +g_2(\Delta t)g_2'(\Delta t)\overrightarrow{r}''^2(t))\notag\\
&\times(g_1(\Delta t)\overrightarrow{r}'(t)+g_2(\Delta t)\overrightarrow{r}''(t)) \notag\\
&-(g_1^2(\Delta t)\overrightarrow{r}'^2(t)+2g_1(\Delta t)g_2(\Delta t)\overrightarrow{r}'(t)\cdot\overrightarrow{r}''(t)+g_2^2(\Delta t)\overrightarrow{r}''^2(t))\notag\\
&\times(g_1'(\Delta t)\overrightarrow{r}'(t)+g_2'(\Delta t)\overrightarrow{r}''(t))]\notag\\
  =&-[(g_1(\Delta t)\overrightarrow{r}'(t)+g_2(\Delta t)\overrightarrow{r}''(t))^2]^{-\frac{3}{2}}\notag\\
&\times(g_1^2(\Delta t)g_1'(\Delta t)\overrightarrow{r}'^2(t)\overrightarrow{r}'(t)+g_1^2(\Delta t)g_2'(\Delta t)\overrightarrow{r}'(t)\cdot\overrightarrow{r}''(t)\overrightarrow{r}'(t) \notag\\&\ \ \  +g_2(\Delta t)g_1'(\Delta t)g_1(\Delta t)\overrightarrow{r}''(t)\cdot\overrightarrow{r}'(t) \overrightarrow{r}'(t)+g_2(\Delta t)g_2'(\Delta t)g_1(\Delta t)\overrightarrow{r}''^2(t)\overrightarrow{r}'(t)\notag\\&\ \ \ +
g_1(\Delta t)g_1'(\Delta t)g_2(\Delta t)\overrightarrow{r}'^2(t)\overrightarrow{r}''(t)+g_1(\Delta t)g_2'(\Delta t)g_2(\Delta t)\overrightarrow{r}'(t)\cdot\overrightarrow{r}''(t)\overrightarrow{r}''(t) \notag\\&\ \ \  +g_2^2(\Delta t)g_1'(\Delta t)\overrightarrow{r}''(t)\cdot\overrightarrow{r}'(t)\overrightarrow{r}''(t) +g_2^2(\Delta t)g_2'(\Delta t)\overrightarrow{r}''^2(t)\overrightarrow{r}''(t) \notag\\
&-g_1^2(\Delta t)g_1'(\Delta t)\overrightarrow{r}'^2(t)\overrightarrow{r}'(t)-2g_1(\Delta t)g_2(\Delta t)g_1'(\Delta t)\overrightarrow{r}'(t)\cdot\overrightarrow{r}''(t)\overrightarrow{r}'(t)\notag\\
&-g_2^2(\Delta t)g_1'(\Delta t)\overrightarrow{r}''^2(t)\overrightarrow{r}'(t)-g_1^2(\Delta t)g_2'(\Delta t)\overrightarrow{r}'^2(t)\overrightarrow{r}''(t)\notag\\
&-2g_1(\Delta t)g_2(\Delta t)g_2'(\Delta t)\overrightarrow{r}'(t)\cdot\overrightarrow{r}''(t)\overrightarrow{r}''(t)-g_2^2(\Delta t)g_2'(\Delta t)\overrightarrow{r}''^2(t)\overrightarrow{r}''(t))\notag\\
  =&-[(g_1(\Delta t)\overrightarrow{r}'(t)+g_2(\Delta t)\overrightarrow{r}''(t))^2]^{-\frac{3}{2}}\notag\\
&\times(g_1^2(\Delta t)g_2'(\Delta t)\overrightarrow{r}'(t)\cdot\overrightarrow{r}''(t)\overrightarrow{r}'(t) \notag\\&\ \ \  +g_2(\Delta t)g_2'(\Delta t)g_1(\Delta t)\overrightarrow{r}''^2(t)\overrightarrow{r}'(t)\notag\\&\ \ \ +g_1(\Delta t)g_1'(\Delta t)g_2(\Delta t)\overrightarrow{r}'^2(t)\overrightarrow{r}''(t)\notag\\&\ \ \  +g_2^2(\Delta t)g_1'(\Delta t)\overrightarrow{r}'(t)\cdot\overrightarrow{r}''(t)\overrightarrow{r}''(t)  \notag\\
&-g_1(\Delta t)g_2(\Delta t)g_1'(\Delta t)\overrightarrow{r}'(t)\cdot\overrightarrow{r}''(t)\overrightarrow{r}'(t)-g_2^2(\Delta t)g_1'(\Delta t)\overrightarrow{r}''^2(t)\overrightarrow{r}'(t)\notag\\
&-g_1^2(\Delta t)g_2'(\Delta t)\overrightarrow{r}'^2(t)\overrightarrow{r}''(t)-g_1(\Delta t)g_2(\Delta t)g_2'(\Delta t)\overrightarrow{r}'(t)\cdot\overrightarrow{r}''(t)\overrightarrow{r}''(t))\notag\\
  =&-[(g_1(\Delta t)\overrightarrow{r}'(t)+g_2(\Delta t)\overrightarrow{r}''(t))^2]^{-\frac{3}{2}}\notag\\
&\times(g_1^2(\Delta t)g_2'(\Delta t)\overrightarrow{r}'(t)\cdot\overrightarrow{r}''(t)\overrightarrow{r}'(t) +g_2(\Delta t)g_2'(\Delta t)g_1(\Delta t)\overrightarrow{r}''^2(t)\overrightarrow{r}'(t)\notag\\
&\ \ \  -g_1(\Delta t)g_2(\Delta t)g_1'(\Delta t)\overrightarrow{r}'(t)\cdot\overrightarrow{r}''(t)\overrightarrow{r}'(t)-g_2^2(\Delta t)g_1'(\Delta t)\overrightarrow{r}''^2(t)\overrightarrow{r}'(t)\notag\\
&\ \ \ +g_1(\Delta t)g_1'(\Delta t)g_2(\Delta t)\overrightarrow{r}'^2(t)\overrightarrow{r}''(t)+g_2^2(\Delta t)g_1'(\Delta t)\overrightarrow{r}'(t)\cdot\overrightarrow{r}''(t)\overrightarrow{r}''(t)  \notag\\
&\ \ \ -g_1^2(\Delta t)g_2'(\Delta t)\overrightarrow{r}'^2(t)\overrightarrow{r}''(t)-g_1(\Delta t)g_2(\Delta t)g_2'(\Delta t)\overrightarrow{r}'(t)\cdot\overrightarrow{r}''(t)\overrightarrow{r}''(t))
,\label{xxsecspa10}\end{align}
where $\Delta t>0$.

Now we proceed to prove the case where $\Delta t\rightarrow 0^+$. To begin with, let us consider the following second-order Taylor's formula of vector function:
\begin{align}\overrightarrow{r}(t+\Delta t)-\overrightarrow{r}(t)=&\overrightarrow{r}'(t)\Delta t+\frac{1}{2}\overrightarrow{r}''(t)\Delta t^2+\frac{1}{6}(\overrightarrow{r}'''(t)+\overrightarrow{\varepsilon})\Delta t^3,\notag
\end{align}
where $\lim_{\Delta t\rightarrow 0^+} \overrightarrow{\varepsilon}=\overrightarrow{0}$. Let $\overrightarrow{\varepsilon}=\varepsilon_1\overrightarrow{r}'(t)+\varepsilon_2\overrightarrow{r}''(t)+\varepsilon_3\overrightarrow{r}'''(t)$. Then
\begin{align}&\overrightarrow{r}(t+\Delta t)-\overrightarrow{r}(t)\notag\\=&\overrightarrow{r}'(t)\Delta t+\frac{1}{2}\overrightarrow{r}''(t)\Delta t^2+\frac{1}{6}\overrightarrow{r}'''(t)\Delta t^3+\frac{1}{6}(\varepsilon_1\overrightarrow{r}'(t)+\varepsilon_2\overrightarrow{r}''(t)+\varepsilon_3\overrightarrow{r}'''(t))\Delta t^3\notag\\
=&\overrightarrow{r}'(t)\Delta t+\frac{1}{2}\overrightarrow{r}''(t)\Delta t^2+\frac{1}{6}\overrightarrow{r}'''(t)\Delta t^3+\frac{1}{6}\varepsilon_1\overrightarrow{r}'(t)\Delta t^3+\frac{1}{6}\varepsilon_2\overrightarrow{r}''(t)\Delta t^3+\frac{1}{6}\varepsilon_3\overrightarrow{r}'''(t) \Delta t^3\notag\\
=&(\Delta t+\frac{1}{6}\varepsilon_1\Delta t^3)\overrightarrow{r}'(t)+(\frac{1}{2}\Delta t^2+\frac{1}{6}\varepsilon_2\Delta t^3)\overrightarrow{r}''(t)+(\frac{1}{6}\Delta t^3+\frac{1}{6}\varepsilon_3\Delta t^3)\overrightarrow{r}'''(t).\label{xxsecspa11}
\end{align}
It follows that
\begin{align}\overrightarrow{e}_{\overrightarrow{PQ}_{12}}& = \frac{(\Delta t+\frac{1}{6}\varepsilon_1\Delta t^3)\overrightarrow{r}'(t)+(\frac{1}{2}\Delta t^2+\frac{1}{6}\varepsilon_2\Delta t^3)\overrightarrow{r}''(t)}{\sqrt{[(\Delta t+\frac{1}{6}\varepsilon_1\Delta t^3)\overrightarrow{r}'(t)+(\frac{1}{2}\Delta t^2+\frac{1}{6}\varepsilon_2\Delta t^3)\overrightarrow{r}''(t)]^2}}\notag\\
& =\frac{(1+\frac{1}{6}\varepsilon_1\Delta t^2)\overrightarrow{r}'(t)+(\frac{1}{2}\Delta t+\frac{1}{6}\varepsilon_2\Delta t^2)\overrightarrow{r}''(t)}{\sqrt{[(1+\frac{1}{6}\varepsilon_1\Delta t^2)\overrightarrow{r}'(t)+(\frac{1}{2}\Delta t +\frac{1}{6}\varepsilon_2\Delta t^2)\overrightarrow{r}''(t)]^2}},\notag\end{align}
where $\Delta t>0$. We just need to do the following calculation:
\begin{align}&\lim_{\Delta t\rightarrow 0^+}\frac{\overrightarrow{e}_{\overrightarrow{PQ}_{12}}(\Delta t)-\overrightarrow{e}_{\overrightarrow{PQ}_{12}}(0)}{\Delta t}\notag\\
  =&\lim_{\Delta t\rightarrow 0^+} \frac{1}{\Delta t}[\frac{(1+\frac{1}{6}\varepsilon_1\Delta t^2)\overrightarrow{r}'(t)+(\frac{1}{2}\Delta t+\frac{1}{6}\varepsilon_2\Delta t^2)\overrightarrow{r}''(t)}{|(1+\frac{1}{6}\varepsilon_1\Delta t^2)\overrightarrow{r}'(t)+(\frac{1}{2}\Delta t +\frac{1}{6}\varepsilon_2\Delta t^2)\overrightarrow{r}''(t)|}-\frac{\overrightarrow{r}'(t)}{|\overrightarrow{r}'(t)|}] \notag\\
  \notag\\
  =&\lim_{\Delta t\rightarrow 0^+} \frac{1}{\Delta t}[\frac{\overrightarrow{r}'(t)}{|(1+\frac{1}{6}\varepsilon_1\Delta t^2)\overrightarrow{r}'(t)+(\frac{1}{2}\Delta t +\frac{1}{6}\varepsilon_2\Delta t^2)\overrightarrow{r}''(t)|}-\frac{\overrightarrow{r}'(t)}{|\overrightarrow{r}'(t)|}]\notag\\&\ \ \ +
  \lim_{\Delta t\rightarrow 0^+}\frac{\frac{1}{6}\varepsilon_1\Delta t\overrightarrow{r}'(t)+(\frac{1}{2}+\frac{1}{6}\varepsilon_2\Delta t)\overrightarrow{r}''(t)}{|(1+\frac{1}{6}\varepsilon_1\Delta t^2)\overrightarrow{r}'(t)+(\frac{1}{2}\Delta t +\frac{1}{6}\varepsilon_2\Delta t^2)\overrightarrow{r}''(t)|}\notag\\
    =&\overrightarrow{r}'(t)\lim_{\Delta t\rightarrow 0^+} \frac{1}{\Delta t}|(1+\frac{1}{6}\varepsilon_1\Delta t^2)\overrightarrow{r}'(t)+(\frac{1}{2}\Delta t +\frac{1}{6}\varepsilon_2\Delta t^2)\overrightarrow{r}''(t)|^{-1}|\overrightarrow{r}'(t)|^{-1}\notag\\&\times
    [|\overrightarrow{r}'(t)|-|(1+\frac{1}{6}\varepsilon_1\Delta t^2)\overrightarrow{r}'(t)+(\frac{1}{2}\Delta t +\frac{1}{6}\varepsilon_2\Delta t^2)\overrightarrow{r}''(t)|]  +\frac{\overrightarrow{r}''(t)}{2|\overrightarrow{r}'(t)|}\notag\\
      =&\overrightarrow{r}'(t)\lim_{\Delta t\rightarrow 0^+} \frac{1}{\Delta t}|(1+\frac{1}{6}\varepsilon_1\Delta t^2)\overrightarrow{r}'(t)+(\frac{1}{2}\Delta t +\frac{1}{6}\varepsilon_2\Delta t^2)\overrightarrow{r}''(t)|^{-1}|\overrightarrow{r}'(t)|^{-1}\notag\\&\times
       [|\overrightarrow{r}'(t)|+|(1+\frac{1}{6}\varepsilon_1\Delta t^2)\overrightarrow{r}'(t)+(\frac{1}{2}\Delta t +\frac{1}{6}\varepsilon_2\Delta t^2)\overrightarrow{r}''(t)|]^{-1}\notag\\&\times
    [|\overrightarrow{r}'(t)|^2-|(1+\frac{1}{6}\varepsilon_1\Delta t^2)\overrightarrow{r}'(t)+(\frac{1}{2}\Delta t +\frac{1}{6}\varepsilon_2\Delta t^2)\overrightarrow{r}''(t)|^2]  +\frac{\overrightarrow{r}''(t)}{2|\overrightarrow{r}'(t)|}\notag\\
      =&\overrightarrow{r}'(t)\lim_{\Delta t\rightarrow 0^+} \frac{1}{\Delta t}|(1+\frac{1}{6}\varepsilon_1\Delta t^2)\overrightarrow{r}'(t)+(\frac{1}{2}\Delta t +\frac{1}{6}\varepsilon_2\Delta t^2)\overrightarrow{r}''(t)|^{-1}|\overrightarrow{r}'(t)|^{-1}\notag\\&\times
       [|\overrightarrow{r}'(t)|+|(1+\frac{1}{6}\varepsilon_1\Delta t^2)\overrightarrow{r}'(t)+(\frac{1}{2}\Delta t +\frac{1}{6}\varepsilon_2\Delta t^2)\overrightarrow{r}''(t)|]^{-1}\notag\\&\times
    [|\overrightarrow{r}'(t)|^2-|(1+\frac{1}{6}\varepsilon_1\Delta t^2)\overrightarrow{r}'(t)|^2-
    2(1+\frac{1}{6}\varepsilon_1\Delta t^2)(\frac{1}{2}\Delta t +\frac{1}{6}\varepsilon_2\Delta t^2)\overrightarrow{r}'(t)\cdot\overrightarrow{r}''(t)
    \notag\\&\ \ \ -|(\frac{1}{2}\Delta t +\frac{1}{6}\varepsilon_2\Delta t^2)\overrightarrow{r}''(t)|^2] +\frac{\overrightarrow{r}''(t)}{2|\overrightarrow{r}'(t)|}\notag\\
      =&\overrightarrow{r}'(t)\lim_{\Delta t\rightarrow 0^+} \frac{1}{\Delta t}|(1+\frac{1}{6}\varepsilon_1\Delta t^2)\overrightarrow{r}'(t)+(\frac{1}{2}\Delta t +\frac{1}{6}\varepsilon_2\Delta t^2)\overrightarrow{r}''(t)|^{-1}|\overrightarrow{r}'(t)|^{-1}\notag\\&\times
       [|\overrightarrow{r}'(t)|+|(1+\frac{1}{6}\varepsilon_1\Delta t^2)\overrightarrow{r}'(t)+(\frac{1}{2}\Delta t +\frac{1}{6}\varepsilon_2\Delta t^2)\overrightarrow{r}''(t)|]^{-1}\notag\\&\times
    [-(\frac{1}{3}\varepsilon_1\Delta t^2+\frac{1}{36}\varepsilon_1^2\Delta t^4)|\overrightarrow{r}'(t)|^2-
    2(1+\frac{1}{6}\varepsilon_1\Delta t^2)(\frac{1}{2}\Delta t +\frac{1}{6}\varepsilon_2\Delta t^2)\overrightarrow{r}'(t)\cdot\overrightarrow{r}''(t)
    \notag\\&\ \ \ -(\frac{1}{2}\Delta t +\frac{1}{6}\varepsilon_2\Delta t^2)^2|\overrightarrow{r}''(t)|^2] +\frac{\overrightarrow{r}''(t)}{2|\overrightarrow{r}'(t)|}\notag\\
       =&\overrightarrow{r}'(t)\lim_{\Delta t\rightarrow 0^+} |(1+\frac{1}{6}\varepsilon_1\Delta t^2)\overrightarrow{r}'(t)+(\frac{1}{2}\Delta t +\frac{1}{6}\varepsilon_2\Delta t^2)\overrightarrow{r}''(t)|^{-1}|\overrightarrow{r}'(t)|^{-1}\notag\\&\times
       [|\overrightarrow{r}'(t)|+|(1+\frac{1}{6}\varepsilon_1\Delta t^2)\overrightarrow{r}'(t)+(\frac{1}{2}\Delta t +\frac{1}{6}\varepsilon_2\Delta t^2)\overrightarrow{r}''(t)|]^{-1}\notag\\&\times
    [-(\frac{1}{3}\varepsilon_1\Delta t+\frac{1}{36}\varepsilon_1^2\Delta t^3)|\overrightarrow{r}'(t)|^2-
    2(1+\frac{1}{6}\varepsilon_1\Delta t^2)(\frac{1}{2} +\frac{1}{6}\varepsilon_2\Delta t)\overrightarrow{r}'(t)\cdot\overrightarrow{r}''(t)
    \notag\\&\ \ \ -\Delta t(\frac{1}{2} +\frac{1}{6}\varepsilon_2\Delta t)^2|\overrightarrow{r}''(t)|^2] +\frac{\overrightarrow{r}''(t)}{2|\overrightarrow{r}'(t)|}\notag\\
        =&\overrightarrow{r}'(t)|\overrightarrow{r}'(t)|^{-2}[2|\overrightarrow{r}'(t)|]^{-1}
    (-\overrightarrow{r}'(t)\cdot\overrightarrow{r}''(t)
    ) +\frac{\overrightarrow{r}''(t)}{2|\overrightarrow{r}'(t)|}\notag\\
       =&-\frac{1}{2}|\overrightarrow{r}'(t)|^{-3}
   \overrightarrow{r}'(t)\cdot\overrightarrow{r}''(t)\overrightarrow{r}'(t)
   +\frac{\overrightarrow{r}''(t)}{2|\overrightarrow{r}'(t)|}
  .\label{xxsecspa12}
\end{align}

2) and 3) The proof is analogous to that of 1). Below is  a list of simple calculations for  $\Delta t\rightarrow 0^+$.
\begin{align}\overrightarrow{e}_{\overrightarrow{PQ}_{13}}& = \frac{(\Delta t+\frac{1}{6}\varepsilon_1\Delta t^3)\overrightarrow{r}'(t)+(\frac{1}{6}\Delta t^3+\frac{1}{6}\varepsilon_3\Delta t^3)\overrightarrow{r}'''(t)}{\sqrt{[(\Delta t+\frac{1}{6}\varepsilon_1\Delta t^3)\overrightarrow{r}'(t)+(\frac{1}{6}\Delta t^3+\frac{1}{6}\varepsilon_3\Delta t^3)\overrightarrow{r}'''(t)]^2}}\notag\\
& =\frac{(1+\frac{1}{6}\varepsilon_1\Delta t^2)\overrightarrow{r}'(t)+(\frac{1}{6}\Delta t^2+\frac{1}{6}\varepsilon_3\Delta t^2)\overrightarrow{r}'''(t)}{\sqrt{[(1+\frac{1}{6}\varepsilon_1\Delta t^2)\overrightarrow{r}'(t)+(\frac{1}{6}\Delta t^2+\frac{1}{6}\varepsilon_3\Delta t^2)\overrightarrow{r}'''(t)]^2}},\notag\end{align}

\begin{align}&\lim_{\Delta t\rightarrow 0^+}\frac{\overrightarrow{e}_{\overrightarrow{PQ}_{13}}(\Delta t)-\overrightarrow{e}_{\overrightarrow{PQ}_{13}}(0)}{\Delta t}\notag\\
  =&\lim_{\Delta t\rightarrow 0^+} \frac{1}{\Delta t}[\frac{(1+\frac{1}{6}\varepsilon_1\Delta t^2)\overrightarrow{r}'(t)+(\frac{1}{6}\Delta t^2+\frac{1}{6}\varepsilon_3\Delta t^2)\overrightarrow{r}'''(t)}{|(1+\frac{1}{6}\varepsilon_1\Delta t^2)\overrightarrow{r}'(t)+(\frac{1}{6}\Delta t^2+\frac{1}{6}\varepsilon_3\Delta t^2)\overrightarrow{r}'''(t)|}-\frac{\overrightarrow{r}'(t)}{|\overrightarrow{r}'(t)|}] \notag\\
  \notag\\
  =&\lim_{\Delta t\rightarrow 0^+} \frac{1}{\Delta t}[\frac{\overrightarrow{r}'(t)}{|(1+\frac{1}{6}\varepsilon_1\Delta t^2)\overrightarrow{r}'(t)+(\frac{1}{6}\Delta t^2+\frac{1}{6}\varepsilon_3\Delta t^2)\overrightarrow{r}'''(t)|}-\frac{\overrightarrow{r}'(t)}{|\overrightarrow{r}'(t)|}]\notag\\&\ \ \ +
  \lim_{\Delta t\rightarrow 0^+}\frac{\frac{1}{6}\varepsilon_1\Delta t\overrightarrow{r}'(t)+(\frac{1}{6}\Delta t+\frac{1}{6}\varepsilon_3\Delta t)\overrightarrow{r}'''(t)}{|(1+\frac{1}{6}\varepsilon_1\Delta t^2)\overrightarrow{r}'(t)+(\frac{1}{6}\Delta t^2+\frac{1}{6}\varepsilon_3\Delta t^2)\overrightarrow{r}'''(t)|}\notag\\
    =&\overrightarrow{r}'(t)\lim_{\Delta t\rightarrow 0^+} \frac{1}{\Delta t}|(1+\frac{1}{6}\varepsilon_1\Delta t^2)\overrightarrow{r}'(t)+(\frac{1}{6}\Delta t^2+\frac{1}{6}\varepsilon_3\Delta t^2)\overrightarrow{r}'''(t)|^{-1}|\overrightarrow{r}'(t)|^{-1}\notag\\&\times
    [|\overrightarrow{r}'(t)|-|(1+\frac{1}{6}\varepsilon_1\Delta t^2)\overrightarrow{r}'(t)+(\frac{1}{6}\Delta t^2+\frac{1}{6}\varepsilon_3\Delta t^2)\overrightarrow{r}'''(t)|] \notag\\
      =&\overrightarrow{r}'(t)\lim_{\Delta t\rightarrow 0^+} \frac{1}{\Delta t}|(1+\frac{1}{6}\varepsilon_1\Delta t^2)\overrightarrow{r}'(t)+(\frac{1}{6}\Delta t^2+\frac{1}{6}\varepsilon_3\Delta t^2)\overrightarrow{r}'''(t)|^{-1}|\overrightarrow{r}'(t)|^{-1}\notag\\&\times
       [|\overrightarrow{r}'(t)|+|(1+\frac{1}{6}\varepsilon_1\Delta t^2)\overrightarrow{r}'(t)+(\frac{1}{6}\Delta t^2+\frac{1}{6}\varepsilon_3\Delta t^2)\overrightarrow{r}'''(t)|]^{-1}\notag\\&\times
    [|\overrightarrow{r}'(t)|^2-|(1+\frac{1}{6}\varepsilon_1\Delta t^2)\overrightarrow{r}'(t)+(\frac{1}{6}\Delta t^2+\frac{1}{6}\varepsilon_3\Delta t^2)\overrightarrow{r}'''(t)|^2]  \notag\\
      =&\overrightarrow{r}'(t)\lim_{\Delta t\rightarrow 0^+} \frac{1}{\Delta t}|(1+\frac{1}{6}\varepsilon_1\Delta t^2)\overrightarrow{r}'(t)+(\frac{1}{6}\Delta t^2+\frac{1}{6}\varepsilon_3\Delta t^2)\overrightarrow{r}'''(t)|^{-1}|\overrightarrow{r}'(t)|^{-1}\notag\\&\times
       [|\overrightarrow{r}'(t)|+|(1+\frac{1}{6}\varepsilon_1\Delta t^2)\overrightarrow{r}'(t)+(\frac{1}{6}\Delta t^2+\frac{1}{6}\varepsilon_3\Delta t^2)\overrightarrow{r}'''(t)|]^{-1}\notag\\&\times
    [|\overrightarrow{r}'(t)|^2-|(1+\frac{1}{6}\varepsilon_1\Delta t^2)\overrightarrow{r}'(t)|^2-
    2(1+\frac{1}{6}\varepsilon_1\Delta t^2)(\frac{1}{6}\Delta t^2+\frac{1}{6}\varepsilon_3\Delta t^2)\overrightarrow{r}'(t)\cdot\overrightarrow{r}'''(t)
    \notag\\&\ \ \ -|(\frac{1}{6}\Delta t^2+\frac{1}{6}\varepsilon_3\Delta t^2)\overrightarrow{r}'''(t)|^2] \notag\\
      =&\overrightarrow{r}'(t)\lim_{\Delta t\rightarrow 0^+} \frac{1}{\Delta t}|(1+\frac{1}{6}\varepsilon_1\Delta t^2)\overrightarrow{r}'(t)+(\frac{1}{6}\Delta t^2+\frac{1}{6}\varepsilon_3\Delta t^2)\overrightarrow{r}'''(t)|^{-1}|\overrightarrow{r}'(t)|^{-1}\notag\\&\times
       [|\overrightarrow{r}'(t)|+|(1+\frac{1}{6}\varepsilon_1\Delta t^2)\overrightarrow{r}'(t)+(\frac{1}{6}\Delta t^2+\frac{1}{6}\varepsilon_3\Delta t^2)\overrightarrow{r}'''(t)|]^{-1}\notag\\&\times
    [-(\frac{1}{3}\varepsilon_1\Delta t^2+\frac{1}{36}\varepsilon_1^2\Delta t^4)|\overrightarrow{r}'(t)|^2-
    2(1+\frac{1}{6}\varepsilon_1\Delta t^2)(\frac{1}{6}\Delta t^2+\frac{1}{6}\varepsilon_3\Delta t^2)\overrightarrow{r}'(t)\cdot\overrightarrow{r}'''(t)
    \notag\\&\ \ \ -(\frac{1}{6}\Delta t^2+\frac{1}{6}\varepsilon_3\Delta t^2)^2|\overrightarrow{r}'''(t)|^2]\notag\\
       =&\overrightarrow{r}'(t)\lim_{\Delta t\rightarrow 0^+} |(1+\frac{1}{6}\varepsilon_1\Delta t^2)\overrightarrow{r}'(t)+(\frac{1}{6}\Delta t^2+\frac{1}{6}\varepsilon_3\Delta t^2)\overrightarrow{r}'''(t)|^{-1}|\overrightarrow{r}'(t)|^{-1}\notag\\&\times
       [|\overrightarrow{r}'(t)|+|(1+\frac{1}{6}\varepsilon_1\Delta t^2)\overrightarrow{r}'(t)+(\frac{1}{6}\Delta t^2+\frac{1}{6}\varepsilon_3\Delta t^2)\overrightarrow{r}'''(t)|]^{-1}\notag\\&\times
    [-(\frac{1}{3}\varepsilon_1\Delta t+\frac{1}{36}\varepsilon_1^2\Delta t^3)|\overrightarrow{r}'(t)|^2-
    2(1+\frac{1}{6}\varepsilon_1\Delta t^2)(\frac{1}{6}\Delta t+\frac{1}{6}\varepsilon_3\Delta t)\overrightarrow{r}'(t)\cdot\overrightarrow{r}'''(t)
    \notag\\&\ \ \ -\Delta t(\frac{1}{6}\Delta t+\frac{1}{6}\varepsilon_3\Delta t)^2|\overrightarrow{r}'''(t)|^2]\notag\\
       =&\overrightarrow{0}
  ,\label{xxsecspa13}
\end{align}
$$|\lim_{\Delta t\rightarrow 0^+}\frac{\overrightarrow{e}_{\overrightarrow{PQ}_{13}}(\Delta t)-\overrightarrow{e}_{\overrightarrow{PQ}_{13}}(0)}{\Delta t}|=0;$$

\begin{align}\overrightarrow{e}_{\overrightarrow{PQ}_{23}}& = \frac{(\frac{1}{2}\Delta t^2+\frac{1}{6}\varepsilon_2\Delta t^3)\overrightarrow{r}''(t)+(\frac{1}{6}\Delta t^3+\frac{1}{6}\varepsilon_3\Delta t^3)\overrightarrow{r}'''(t)}{\sqrt{[(\frac{1}{2}\Delta t^2+\frac{1}{6}\varepsilon_2\Delta t^3)\overrightarrow{r}''(t)+(\frac{1}{6}\Delta t^3+\frac{1}{6}\varepsilon_3\Delta t^3)\overrightarrow{r}'''(t)]^2}}\notag\\
& =\frac{(1+\frac{1}{3}\varepsilon_2\Delta t)\overrightarrow{r}''(t)+(\frac{1}{3}\Delta t+\frac{1}{3}\varepsilon_3\Delta t)\overrightarrow{r}'''(t)}{\sqrt{[(1+\frac{1}{3}\varepsilon_2\Delta t)\overrightarrow{r}''(t)+(\frac{1}{3}\Delta t+\frac{1}{3}\varepsilon_3\Delta t)\overrightarrow{r}'''(t)]^2}},\notag\end{align}

\begin{align}&\lim_{\Delta t\rightarrow 0^+}\frac{\overrightarrow{e}_{\overrightarrow{PQ}_{23}}(\Delta t)-\overrightarrow{e}_{\overrightarrow{PQ}_{23}}(0)}{\Delta t}\notag\\
  =&\lim_{\Delta t\rightarrow 0^+} \frac{1}{\Delta t}[\frac{(1+\frac{1}{3}\varepsilon_2\Delta t)\overrightarrow{r}''(t)+(\frac{1}{3}\Delta t+\frac{1}{3}\varepsilon_3\Delta t)\overrightarrow{r}'''(t)}{|(1+\frac{1}{3}\varepsilon_2\Delta t)\overrightarrow{r}''(t)+(\frac{1}{3}\Delta t+\frac{1}{3}\varepsilon_3\Delta t)\overrightarrow{r}'''(t)|}-\frac{\overrightarrow{r}''(t)}{|\overrightarrow{r}''(t)|}] \notag\\
  \notag\\
  =&\lim_{\Delta t\rightarrow 0^+} \frac{1}{\Delta t}[\frac{\overrightarrow{r}''(t)}{|(1+\frac{1}{3}\varepsilon_2\Delta t)\overrightarrow{r}''(t)+(\frac{1}{3}\Delta t+\frac{1}{3}\varepsilon_3\Delta t)\overrightarrow{r}'''(t)|}-\frac{\overrightarrow{r}''(t)}{|\overrightarrow{r}''(t)|}]\notag\\&\ \ \ +
  \lim_{\Delta t\rightarrow 0^+}\frac{\frac{1}{3}\varepsilon_2\overrightarrow{r}''(t)+(\frac{1}{3}+\frac{1}{3}\varepsilon_3)\overrightarrow{r}'''(t)}{|(1+\frac{1}{3}\varepsilon_2\Delta t)\overrightarrow{r}''(t)+(\frac{1}{3}\Delta t+\frac{1}{3}\varepsilon_3\Delta t)\overrightarrow{r}'''(t)|}\notag\\
    =&\overrightarrow{r}''(t)\lim_{\Delta t\rightarrow 0^+} \frac{1}{\Delta t}|(1+\frac{1}{3}\varepsilon_2\Delta t)\overrightarrow{r}''(t)+(\frac{1}{3}\Delta t+\frac{1}{3}\varepsilon_3\Delta t)\overrightarrow{r}'''(t)|^{-1}|\overrightarrow{r}''(t)|^{-1}\notag\\&\times
    [|\overrightarrow{r}''(t)|-|(1+\frac{1}{3}\varepsilon_2\Delta t)\overrightarrow{r}''(t)+(\frac{1}{3}\Delta t+\frac{1}{3}\varepsilon_3\Delta t)\overrightarrow{r}'''(t)|] +\frac{\overrightarrow{r}'''(t)}{3|\overrightarrow{r}''(t)|}\notag\\
      =&\overrightarrow{r}''(t)\lim_{\Delta t\rightarrow 0^+} \frac{1}{\Delta t}|(1+\frac{1}{3}\varepsilon_2\Delta t)\overrightarrow{r}''(t)+(\frac{1}{3}\Delta t+\frac{1}{3}\varepsilon_3\Delta t)\overrightarrow{r}'''(t)|^{-1}|\overrightarrow{r}''(t)|^{-1}\notag\\&\times
       [|\overrightarrow{r}''(t)|+|(1+\frac{1}{3}\varepsilon_2\Delta t)\overrightarrow{r}''(t)+(\frac{1}{3}\Delta t+\frac{1}{3}\varepsilon_3\Delta t)\overrightarrow{r}'''(t)|]^{-1}\notag\\&\times
    [|\overrightarrow{r}''(t)|^2-|(1+\frac{1}{3}\varepsilon_2\Delta t)\overrightarrow{r}''(t)+(\frac{1}{3}\Delta t+\frac{1}{3}\varepsilon_3\Delta t)\overrightarrow{r}'''(t)|^2]+\frac{\overrightarrow{r}'''(t)}{3|\overrightarrow{r}''(t)|} \notag\\
      =&\overrightarrow{r}''(t)\lim_{\Delta t\rightarrow 0^+} \frac{1}{\Delta t}|(1+\frac{1}{3}\varepsilon_2\Delta t)\overrightarrow{r}''(t)+(\frac{1}{3}\Delta t+\frac{1}{3}\varepsilon_3\Delta t)\overrightarrow{r}'''(t)|^{-1}|\overrightarrow{r}''(t)|^{-1}\notag\\&\times
       [|\overrightarrow{r}''(t)|+|(1+\frac{1}{3}\varepsilon_2\Delta t)\overrightarrow{r}''(t)+(\frac{1}{3}\Delta t+\frac{1}{3}\varepsilon_3\Delta t)\overrightarrow{r}'''(t)|]^{-1}\notag\\&\times
    [|\overrightarrow{r}''(t)|^2-|(1+\frac{1}{3}\varepsilon_2\Delta t)\overrightarrow{r}''(t)|^2-
    2(1+\frac{1}{3}\varepsilon_2\Delta t)(\frac{1}{3}\Delta t+\frac{1}{3}\varepsilon_3\Delta t)\overrightarrow{r}''(t)\cdot\overrightarrow{r}'''(t)
    \notag\\&\ \ \ -|(\frac{1}{3}\Delta t+\frac{1}{3}\varepsilon_3\Delta t)\overrightarrow{r}'''(t)|^2] +\frac{\overrightarrow{r}'''(t)}{3|\overrightarrow{r}''(t)|}\notag\\
      =&\overrightarrow{r}''(t)\lim_{\Delta t\rightarrow 0^+} \frac{1}{\Delta t}|(1+\frac{1}{3}\varepsilon_2\Delta t)\overrightarrow{r}''(t)+(\frac{1}{3}\Delta t+\frac{1}{3}\varepsilon_3\Delta t)\overrightarrow{r}'''(t)|^{-1}|\overrightarrow{r}''(t)|^{-1}\notag\\&\times
       [|\overrightarrow{r}''(t)|+|(1+\frac{1}{3}\varepsilon_2\Delta t)\overrightarrow{r}''(t)+(\frac{1}{3}\Delta t+\frac{1}{3}\varepsilon_3\Delta t)\overrightarrow{r}'''(t)|]^{-1}\notag\\&\times
    [-(\frac{2}{3}\varepsilon_2\Delta t+\frac{1}{9}\varepsilon_2^2\Delta t^2)|\overrightarrow{r}''(t)|^2-
    2(1+\frac{1}{3}\varepsilon_2\Delta t)(\frac{1}{3}\Delta t+\frac{1}{3}\varepsilon_3\Delta t)\overrightarrow{r}''(t)\cdot\overrightarrow{r}'''(t)
    \notag\\&\ \ \ -(\frac{1}{3}\Delta t+\frac{1}{3}\varepsilon_3\Delta t)^2|\overrightarrow{r}'''(t)|^2]+\frac{\overrightarrow{r}'''(t)}{3|\overrightarrow{r}''(t)|}\notag\\
           =&\overrightarrow{r}''(t)\lim_{\Delta t\rightarrow 0^+}|(1+\frac{1}{3}\varepsilon_2\Delta t)\overrightarrow{r}''(t)+(\frac{1}{3}\Delta t+\frac{1}{3}\varepsilon_3\Delta t)\overrightarrow{r}'''(t)|^{-1}|\overrightarrow{r}''(t)|^{-1}\notag\\&\times
       [|\overrightarrow{r}''(t)|+|(1+\frac{1}{3}\varepsilon_2\Delta t)\overrightarrow{r}''(t)+(\frac{1}{3}\Delta t+\frac{1}{3}\varepsilon_3\Delta t)\overrightarrow{r}'''(t)|]^{-1}\notag\\&\times
    [-(\frac{2}{3}\varepsilon_2+\frac{1}{9}\varepsilon_2^2\Delta t)|\overrightarrow{r}''(t)|^2-
    2(1+\frac{1}{3}\varepsilon_2\Delta t)(\frac{1}{3}+\frac{1}{3}\varepsilon_3)\overrightarrow{r}''(t)\cdot\overrightarrow{r}'''(t)
    \notag\\&\ \ \ -\Delta t(\frac{1}{3}+\frac{1}{3}\varepsilon_3)^2|\overrightarrow{r}'''(t)|^2]+\frac{\overrightarrow{r}'''(t)}{3|\overrightarrow{r}''(t)|}\notag\\
       =&|\overrightarrow{r}''(t)|^{-2}(2|\overrightarrow{r}''(t)|^{-1})
  (-\frac{2}{3} \overrightarrow{r}''(t)\cdot\overrightarrow{r}'''(t))\overrightarrow{r}''(t)
   +\frac{\overrightarrow{r}'''(t)}{3|\overrightarrow{r}''(t)|}\notag\\
       =&-\frac{1}{3}|\overrightarrow{r}''(t)|^{-3}
   \overrightarrow{r}''(t)\cdot\overrightarrow{r}'''(t)\overrightarrow{r}''(t)
   +\frac{\overrightarrow{r}'''(t)}{3|\overrightarrow{r}''(t)|}
  ,\label{xxsecspa14}
\end{align}
$$|\lim_{\Delta t\rightarrow 0^+}\frac{\overrightarrow{e}_{\overrightarrow{PQ}_{23}}(\Delta t)-\overrightarrow{e}_{\overrightarrow{PQ}_{23}}(0)}{\Delta t}|= \frac{|-
   \overrightarrow{r}''(t)\cdot\overrightarrow{r}'''(t)\overrightarrow{r}''(t)
   +\overrightarrow{r}''^2(t)\overrightarrow{r}'''(t)|}{3|\overrightarrow{r}''(t)|^{3}}.$$

\end{proof}

\textbf{Note:} 1. We call (\ref{xxsecspa12})-(\ref{xxsecspa14})  the local projective rotational velocity functions of (\ref{xxsecspa1}). They will be denoted by $\overrightarrow{\psi}_{12}(t)$, $\overrightarrow{\psi}_{13}(t)$, and $\overrightarrow{\psi}_{23}(t)$ respectively.

2. In the following discussion, we consistently assume that the projective vectors  of a vector  onto the three frame planes  of any frame system are non-zero.
For example, the aforementioned $\overrightarrow{PQ}_{12}$ , $\overrightarrow{PQ}_{13}$ , and $\overrightarrow{PQ}_{23}$.

Before proceeding further, we must define the sign function for (\ref{xxsecspa1}) as follows:
 $$\epsilon(t)=\frac{\overrightarrow{r}'(t)\wedge\overrightarrow{r}''(t)\cdot\overrightarrow{r}'''(t)}{|\overrightarrow{r}'(t)\wedge\overrightarrow{r}''(t)\cdot\overrightarrow{r}'''(t)|}.$$
Now we are in a position to consider the following uniqueness theorem.
\begin{theorem}\label{xxsec3.6}  Given the following   expressions associated with a space curve $\overrightarrow{r}(t)$ on the interval $(t_0,t_1)$:  \begin{align}
&|\overrightarrow{r}'(t)|,\notag\\\
&\frac{|-
   \overrightarrow{r}'(t)\cdot\overrightarrow{r}''(t)\overrightarrow{r}'(t)
   +\overrightarrow{r}'^2(t)\overrightarrow{r}''(t)|}{2|\overrightarrow{r}'(t)|^{3}}
   ,\notag\\
&
0,\notag\\
& \frac{|-
   \overrightarrow{r}''(t)\cdot\overrightarrow{r}'''(t)\overrightarrow{r}''(t)
   +\overrightarrow{r}''^2(t)\overrightarrow{r}'''(t)|}{3|\overrightarrow{r}''(t)|^{3}},\notag\\
&\frac{\overrightarrow{r}'(t)\wedge\overrightarrow{r}''(t)\cdot\overrightarrow{r}'''(t)}{|\overrightarrow{r}'(t)\wedge\overrightarrow{r}''(t)\cdot\overrightarrow{r}'''(t)|}.\notag
\end{align}
If  $\overrightarrow{r}_1(t)$\,($t_0 <t< t_1$) is a space curve whose $\phi(t)$,  $|\overrightarrow{\psi}_{12}(t)|$, $|\overrightarrow{\psi}_{13}(t)|$, $|\overrightarrow{\psi}_{23}(t)|$, and $\epsilon(t)$  are given by the above expressions respectively, then $\overrightarrow{r}_1(t)$ and $\overrightarrow{r}(t)$ coincide on $(t_0,t_1)$ up to position in space.
 \end{theorem}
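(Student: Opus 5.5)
The plan is to reduce the statement to the fundamental theorem of space curves: first show that the given data determine, as functions of $t$, the speed $v(t)=|\overrightarrow{r}'(t)|$, the curvature $\kappa(t)$ and the torsion $\tau(t)$; then invoke the classical uniqueness theorem (curves with the same $v$, $\kappa$, $\tau$ on $(t_0,t_1)$ coincide up to a rigid motion). The datum $|\overrightarrow{\psi}_{13}(t)|=0$ holds for every curve by Proposition \ref{xxsec3.5}, so it carries no information and is not used.

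\textbf{Step 1 (curvature).} Use the identity $|\overrightarrow{r}'^2\overrightarrow{r}''-(\overrightarrow{r}'\cdot\overrightarrow{r}'')\overrightarrow{r}'|=|\overrightarrow{r}'|\,|\overrightarrow{r}'\wedge\overrightarrow{r}''|$, which follows from $\overrightarrow{a}\wedge(\overrightarrow{b}\wedge\overrightarrow{a})=\overrightarrow{a}^2\overrightarrow{b}-(\overrightarrow{a}\cdot\overrightarrow{b})\overrightarrow{a}$. It gives $|\overrightarrow{\psi}_{12}|=\frac{|\overrightarrow{r}'\wedge\overrightarrow{r}''|}{2|\overrightarrow{r}'|^2}=\frac12\kappa v$. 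Hence $\phi=v$ and $|\overrightarrow{\psi}_{12}|$ determine $\kappa$. The standing hypothesis $\overrightarrow{r}'\wedge\overrightarrow{r}''\cdot\overrightarrow{r}'''\neq0$ guarantees that $\kappa>0$, so the Frenet frame $\{T,N,B\}$ exists.

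\textbf{Step 2 (torsion).} Expand in the Frenet frame:
\[
\overrightarrow{r}''=aT+bN,\qquad \overrightarrow{r}'''=cT+dN+eB,
\]
where
\[
a=v',\quad b=\kappa v^2,\quad c=v''-\kappa^2v^3,\quad d=\kappa v v'+(\kappa v^2)',\quad e=\kappa\tau v^3 .
\]
The coefficients $a,b,c,d$ are already determined by Step 1, and so is $|\overrightarrow{r}''|^2=a^2+b^2$. The same identity as in Step 1 gives
\[
|\overrightarrow{\psi}_{23}|=\frac{|\overrightarrow{r}''\wedge\overrightarrow{r}'''|}{3|\overrightarrow{r}''|^2},
\qquad
|\overrightarrow{r}''\wedge\overrightarrow{r}'''|^2=(ad-bc)^2+e^2(a^2+b^2).
\]
Since $a^2+b^2>0$, this determines $e^2$, hence $\tau^2$ (as $\kappa v^3\neq0$). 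Finally, $\overrightarrow{r}'\wedge\overrightarrow{r}''\cdot\overrightarrow{r}'''=\kappa^2\tau v^6$, so $\epsilon(t)=\operatorname{sgn}\tau(t)$, and $\tau$ is fixed.

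\textbf{Step 3 (conclusion).} Both $\overrightarrow{r}$ and $\overrightarrow{r}_1$ have the same $v,\kappa,\tau$ on $(t_0,t_1)$. Reparametrising both by arc length $s=\int v\,dt$, which is the same function of $t$ for both curves, the fundamental theorem gives a rigid motion carrying one onto the other, with matching parametrisations.

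The main obstacle is Step 2: extracting $|\tau|$ from $|\overrightarrow{\psi}_{23}|$. This needs the cross-product expansion above to be carried out carefully. In particular one must check that the coefficient $e$ enters only through the term $e^2(a^2+b^2)$, with no cross term against $ad-bc$. That holds because $(aT+bN)\wedge eB=e(bT-aN)$ is orthogonal to $B$, while the remaining part of $\overrightarrow{r}''\wedge\overrightarrow{r}'''$ is $(ad-bc)B$.
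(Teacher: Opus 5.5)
Your proof is correct. It ends where the paper's does: equal speed, curvature and torsion, then the fundamental theorem of curves. The computation in the middle is different, though.

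\textbf{The paper's route.} It stays with inner products throughout. It squares the equalities of $|\overrightarrow{\psi}_{12}|$ and $|\overrightarrow{\psi}_{23}|$ and repeatedly differentiates the resulting scalar identities in $t$. In turn it obtains that $\overrightarrow{r}'^2$, $\overrightarrow{r}'\cdot\overrightarrow{r}''$, $\overrightarrow{r}''^2$, $\overrightarrow{r}''\cdot\overrightarrow{r}'''$, $\overrightarrow{r}'''^2$ and $\overrightarrow{r}'\cdot\overrightarrow{r}'''$ agree for the two curves. That is, the whole Gram matrix of $(\overrightarrow{r}',\overrightarrow{r}'',\overrightarrow{r}''')$ coincides. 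Lagrange's identity then gives equality of $|\overrightarrow{r}'\wedge\overrightarrow{r}''|$ and of $(\overrightarrow{r}'\wedge\overrightarrow{r}''\cdot\overrightarrow{r}''')^2$, hence of $\kappa$ and $|\tau|$. Finally $\epsilon$ fixes the sign of $\tau$.

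\textbf{Your route.} You use $|\overrightarrow{a}^2\overrightarrow{b}-(\overrightarrow{a}\cdot\overrightarrow{b})\overrightarrow{a}|=|\overrightarrow{a}|\,|\overrightarrow{a}\wedge\overrightarrow{b}|$ to identify $|\overrightarrow{\psi}_{12}|=\frac12\kappa v$ and $|\overrightarrow{\psi}_{23}|=|\overrightarrow{r}''\wedge\overrightarrow{r}'''|/(3|\overrightarrow{r}''|^2)$ directly. You then use the Frenet splitting $\overrightarrow{r}''\wedge\overrightarrow{r}'''=(ad-bc)B+e(bT-aN)$ to isolate $e^2=\kappa^2\tau^2v^6$. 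The derivatives $v'$, $v''$, $\kappa'$ that enter $a,c,d$ do the job of the paper's differentiated identities.

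\textbf{What each buys.} The paper's argument is basis-free and yields a stronger intermediate statement, namely coincidence of the full Gram matrices. Yours gives explicit formulas for the invariants, for example $\kappa=2|\overrightarrow{\psi}_{12}|/\phi$. It also shows plainly that $|\overrightarrow{\psi}_{13}|\equiv 0$ carries no information, and why $\epsilon$ is needed, since only $\tau^2$ is recovered otherwise. Finally, it states points the paper leaves tacit:
\begin{itemize}
\item $\kappa>0$ follows from $\overrightarrow{r}'\wedge\overrightarrow{r}''\cdot\overrightarrow{r}'''\neq 0$;
\item $a^2+b^2$ does not vanish;
\item the common arc-length function makes the rigid motion respect the given parametrizations.
\end{itemize}
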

\begin{proof} Note first that
 \begin{align}
{\overrightarrow{r}'_1}^2= \overrightarrow{r}'^2 ,\label{xxsecspa15}
\end{align}
so differentiating the two sides of the above relation yields
\begin{align} \overrightarrow{r}'_1 \cdot\overrightarrow{r}''_1 =\overrightarrow{r}'\cdot\overrightarrow{r}'' .\label{xxsecspa16}\end{align} The hypotheses force
 \begin{align}
&\frac{|-\overrightarrow{r}'_1 \cdot\overrightarrow{r}_1'' \overrightarrow{r}_1'
+\overrightarrow{r}'^2_1 \overrightarrow{r}_1'' | }{2|\overrightarrow{r}_1' |^{3}}
=\frac{|-\overrightarrow{r}' \cdot\overrightarrow{r}'' \overrightarrow{r}'
+\overrightarrow{r}'^2 \overrightarrow{r}'' |}{2|\overrightarrow{r}' |^{3}},\label{xxsecspa17}\\
&\frac{|-\overrightarrow{r}''_1 \cdot\overrightarrow{r}'''_1
\overrightarrow{r}''_1 +\overrightarrow{r}_1''^2\overrightarrow{r}'''_1 |}{3|\overrightarrow{r}''_1 |^{3}}
=\frac{|-\overrightarrow{r}'' \cdot\overrightarrow{r}'''
\overrightarrow{r}'' +\overrightarrow{r}''^2 \overrightarrow{r}''' |}{3|\overrightarrow{r}'' |^{3}}.\label{xxsecspa18}
\end{align}
Taking into account (\ref{xxsecspa15}) and comparing the two sides of (\ref{xxsecspa17}), we infer that
 \begin{align}
|-\overrightarrow{r}'_1 \cdot\overrightarrow{r}''_1 \overrightarrow{r}'_1 +{\overrightarrow{r}'_1}^2 \overrightarrow{r}''_1 |
 = |-\overrightarrow{r}' \cdot\overrightarrow{r}'' \overrightarrow{r}' +\overrightarrow{r}'^2 \overrightarrow{r}'' |
,\notag\end{align}
or equivalently
\begin{align}
&(\overrightarrow{r}'_1 \cdot\overrightarrow{r}''_1 )^2{\overrightarrow{r}'_1}^2
-2(\overrightarrow{r}'_1 \cdot\overrightarrow{r}''_1 )^2{\overrightarrow{r}'_1}^2
+{\overrightarrow{r}'_1}^4 {\overrightarrow{r}''_1}^2
\notag\\
=& (\overrightarrow{r}' \cdot\overrightarrow{r}'' )^2\overrightarrow{r}'^2
-2(\overrightarrow{r}' \cdot\overrightarrow{r}'' )^2\overrightarrow{r}'^2
+\overrightarrow{r}'^4 \overrightarrow{r}''^2,\notag
\end{align}
which degenerates into
 \begin{align}
{\overrightarrow{r}''_1}^2 =\overrightarrow{r}''^2\label{xxsecspa19}
\end{align}
by (\ref{xxsecspa15}) and (\ref{xxsecspa16}). Differentiating  (\ref{xxsecspa19}), we obtain
 \begin{align}
{\overrightarrow{r}''_1} \cdot{\overrightarrow{r}'''_1} =  \overrightarrow{r}'' \cdot  \overrightarrow{r}''' \label{xxsecspa20}
.\end{align}
Analogously, considering (\ref{xxsecspa19}) and comparing the two sides of (\ref{xxsecspa18}) gives \begin{align}
|-\overrightarrow{r}''_1 \cdot\overrightarrow{r}'''_1
\overrightarrow{r}''_1 +{\overrightarrow{r}''_1}^2 \overrightarrow{r}'''_1 |
=|-\overrightarrow{r}'' \cdot\overrightarrow{r}'''
\overrightarrow{r}'' +\overrightarrow{r}''^2 \overrightarrow{r}''' |,\notag
\end{align}
that is
\begin{align}
&(\overrightarrow{r}''_1 \cdot\overrightarrow{r}'''_1 )^2{\overrightarrow{r}''_1}^2
-2(\overrightarrow{r}''_1 \cdot\overrightarrow{r}'''_1 )^2{\overrightarrow{r}''_1}^2
+{\overrightarrow{r}''_1}^4 {\overrightarrow{r}'''_1}^2
\notag\\
=&(\overrightarrow{r}'' \cdot\overrightarrow{r}''' )^2\overrightarrow{r}''^2
-2(\overrightarrow{r}'' \cdot\overrightarrow{r}''' )^2\overrightarrow{r}''^2
+\overrightarrow{r}''^4 \overrightarrow{r}'''^2.\notag
\end{align}
By applying (\ref{xxsecspa19}) and (\ref{xxsecspa20}) to the above relation, we get
 \begin{align}
{\overrightarrow{r}'''_1}^2 =\overrightarrow{r}'''^2 .\label{xxsecspa21}
\end{align}
 Differentiating (\ref{xxsecspa16}) leads to
\begin{align} {\overrightarrow{r}''_1}^2
+\overrightarrow{r}'_1 \cdot\overrightarrow{r}'''_1 =\overrightarrow{r}''^2 +\overrightarrow{r}' \cdot\overrightarrow{r}'''  .\notag\end{align}
Taking (\ref{xxsecspa19}) into the above relation yields
\begin{align} \overrightarrow{r}'_1 \cdot\overrightarrow{r}'''_1=\overrightarrow{r}' \cdot\overrightarrow{r}'''
 .\label{xxsecspa22}\end{align}

Before continuing our proof, we need the following Lagrange's identity:
\begin{align}( \overrightarrow{r}'  \wedge\overrightarrow{r}'' )^2=&\overrightarrow{r}'^2 \overrightarrow{r}''^2
-(\overrightarrow{r}' \cdot\overrightarrow{r}'' )^2.\notag
\end{align}
This together with (\ref{xxsecspa15})-(\ref{xxsecspa16}), (\ref{xxsecspa19}) tells us \begin{align}|\overrightarrow{r}'_1 \wedge\overrightarrow{r}''_1|=|\overrightarrow{r}' \wedge\overrightarrow{r}''| .\label{xxsecspa23}
\end{align}
Using Lagrange's identity  again, we obtain
\begin{align}
&(\overrightarrow{r}' \wedge\overrightarrow{r}'' \cdot\overrightarrow{r}''' )^2\notag\\
=&( \overrightarrow{r}'  \wedge\overrightarrow{r}'' )^2\overrightarrow{r}'''^2
-[(\overrightarrow{r}' \wedge\overrightarrow{r}'') \wedge\overrightarrow{r}''' ]^2\notag\\
=&( \overrightarrow{r}'  \wedge\overrightarrow{r}'' )^2\overrightarrow{r}'''^2
-(\overrightarrow{r}' \cdot\overrightarrow{r}''' \overrightarrow{r}''
-\overrightarrow{r}'' \cdot\overrightarrow{r}''' \overrightarrow{r}' )^2\notag\\
=&( \overrightarrow{r}'  \wedge\overrightarrow{r}'' )^2\overrightarrow{r}'''^2
-(\overrightarrow{r}' \cdot\overrightarrow{r}''' )^2\overrightarrow{r}''^2 \notag\\
&\ \ \ +2(\overrightarrow{r}' \cdot\overrightarrow{r}''' )( \overrightarrow{r}''  \cdot\overrightarrow{r}''' )(\overrightarrow{r}' \cdot\overrightarrow{r}'' )
-(\overrightarrow{r}'' \cdot\overrightarrow{r}''' )^2\overrightarrow{r}'^2 .\label{xxsecspa24}
\end{align}
This together with (\ref{xxsecspa15})-(\ref{xxsecspa16}), (\ref{xxsecspa19})-(\ref{xxsecspa23}) gives rise to
\begin{align}
&(\overrightarrow{r}'_1 \wedge\overrightarrow{r}''_1 \cdot\overrightarrow{r}'''_1 )^2=(\overrightarrow{r}' \wedge\overrightarrow{r}'' \cdot\overrightarrow{r}''' )^2
.\label{xxsecspa25}
\end{align}
So  the curvatures of $\overrightarrow{r}_1$, $\overrightarrow{r}$ are equal by (\ref{xxsecspa15}) and (\ref{xxsecspa23}), as are their torsions by (\ref{xxsecspa23}) and  (\ref{xxsecspa25}) (remember to use the sign function of $\overrightarrow{r}_1$). This completes the proof.
\end{proof}

Now we can say that space curves can be classified by $\phi(t)$,  $|\overrightarrow{\psi}_{12}(t)|$, $|\overrightarrow{\psi}_{13}(t)|$, $|\overrightarrow{\psi}_{23}(t)|$, and $\epsilon(t)$.

\subsection{Two space curves with symmetric rotating frames}

Consider the following pair of simple ($C^2$-) curves:
\begin{align}
  \overrightarrow{r}_1 (t)=&\{x_1(t),y_1(t),z_1(t)\}, \label{xxsecspa26}\\
   \overrightarrow{r}_2 (t)=&\{x_2(t),y_2(t),z_2(t)\}, \label{xxsecspa27}
\end{align}
where $t_0 < t<  t_1$. Assume $P(x_1(t),y_1(t),z_1(t))$ is an arbitrary point on (\ref{xxsecspa26}). Then we can set up the following frames:
\begin{align}
 &\{P(t); \overrightarrow{i}(t), \overrightarrow{j}(t), \overrightarrow{k}(t)\}, \label{xxsecspa28}\\
& \{P(t);\overrightarrow{e}_1(t), \overrightarrow{e}_2(t), \overrightarrow{e}_3(t) \}. \label{xxsecspa29}
\end{align}
Here $\overrightarrow{i}(t), \overrightarrow{j}(t), \overrightarrow{k}(t)$ are unit vectors in the same direction as the standard basis vector; $$\overrightarrow{e}_1(t)= \frac{\{x_2(t)-x_1(t),y_2(t)-y_1(t),z_2(t)-z_1(t)\}}{\sqrt{(x_2(t)-x_1(t))^2+(y_2(t)-y_1(t))^2+(z_2(t)-z_1(t))^2}},$$  $\overrightarrow{e}_i(t)$\,($i=1,2,3$)  constitute a right-handed Cartesian frame. Let the coordinates of (\ref{xxsecspa29}) be denoted by $\xi, \eta, \zeta$.  Then the equation of (\ref{xxsecspa27}) under (\ref{xxsecspa29}) will  be
\begin{equation}\label{xxsecspa30}
  \xi(t)= \sqrt{(x_2(t)-x_1(t))^2+(y_2(t)-y_1(t))^2+(z_2(t)-z_1(t))^2 },  \ \  \eta(t)=0, \ \  \zeta(t)=0.
\end{equation}

\textbf{Note:} In this subsection, the default parameter range is $t_0 <t< t_1$, (\ref{xxsecspa26}) and (\ref{xxsecspa27}) do not intersect within this interval.

 In the setting of (\ref{xxsecspa28}) and  (\ref{xxsecspa29}),  the  moving point $S(x_2(t),y_2(t),z_2(t))$ on (\ref{xxsecspa27}) undergoes  moving along the line and rotating at the same time.
Let us still use $\overrightarrow{PS}_{A}$ to denote the projective vector of $\overrightarrow{PS}$ onto the  plane spanned by $\overrightarrow{i}(t),\overrightarrow{j}(t)$, $\overrightarrow{PS}_{B}$ the projective vector of $\overrightarrow{PS}$ onto the plane of $\overrightarrow{i}(t),\overrightarrow{k}(t)$, and $\overrightarrow{PS}_{C}$ the projective vector of $\overrightarrow{PS}$ onto  the plane of $\overrightarrow{j}(t),\overrightarrow{k}(t)$.  Remember that these  are non-zero vectors. Then we can describe the behaviour of the moving points on  (\ref{xxsecspa27}) by the following proposition.
\begin{proposition}\label{xxsec3.7} Let $S$ be a moving point on (\ref{xxsecspa27}), and let its parameter be $t$.
\begin{enumerate}
  \item[$\mathrm{(i)}$]  For $\mathcal{D}=|\overrightarrow{PS}|$, we have
   \begin{align}
\frac{ \mathrm{d}\mathcal{D}}{\mathrm{d}t} =&[(x_2(t)-x_1(t))^2+(y_2(t)-y_1(t))^2+(z_2(t)-z_1(t))^2]^{-\frac{1}{2}}\notag\\
&\times[(x_2(t)-x_1(t))(x_2'(t)-x_1'(t))+(y_2(t)-y_1(t))(y_2'(t)-y_1'(t))\notag\\&
+(z_2(t)-z_1(t))(z_2'(t)-z_1'(t))].\notag
\end{align}
\item[$\mathrm{(ii)}$] The rotational speed of $\overrightarrow{PS}_{A}=\{x_2(t)-x_1(t),y_2(t)-y_1(t),0\}$  is $$\frac{|(x_2(t)-x_1(t))(y_2'(t)-y_1'(t))-(x_2'(t)-x_1'(t))(y_2(t)-y_1(t))|}{(x_2(t)-x_1(t))^2+(y_2(t)-y_1(t))^2}.$$
    \item[$\mathrm{(iii)}$] The rotational speed of $\overrightarrow{PS}_{B}=\{x_2(t)-x_1(t),0,z_2(t)-z_1(t)\}\}$ is $$\frac{|(x_2(t)-x_1(t))(z_2'(t)-z_1'(t))-(x_2'(t)-x_1'(t))(z_2(t)-z_1(t))|}{(x_2(t)-x_1(t))^2+(z_2(t)-z_1(t))^2}.$$
        \item[$\mathrm{(iv)}$] The rotational speed of $\overrightarrow{PS}_{C}=\{0,y_2(t)-y_1(t),z_2(t)-z_1(t)\}$ is $$\frac{|(y_2(t)-y_1(t))(z_2'(t)-z_1'(t))-(y_2'(t)-y_1'(t))(z_2(t)-z_1(t))|}{(y_2(t)-y_1(t))^2+(z_2(t)-z_1(t))^2}.$$
\end{enumerate}
\end{proposition}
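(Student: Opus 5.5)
The plan is to reduce everything to the computations already done in Proposition \ref{xxsec3.1} and (\ref{xxsecpla4}). The only change is that the position vector $\overrightarrow{OP}$ is replaced by the relative vector
$$\overrightarrow{PS}(t)=\overrightarrow{r}_2(t)-\overrightarrow{r}_1(t)=\{u(t),v(t),w(t)\},$$
where $u=x_2-x_1$, $v=y_2-y_1$, $w=z_2-z_1$. Since (\ref{xxsecspa26}) and (\ref{xxsecspa27}) are $C^2$, the functions $u,v,w$ are $C^2$. Because the curves do not intersect, $\overrightarrow{PS}\neq\overrightarrow{0}$ on $(t_0,t_1)$. The frame (\ref{xxsecspa28}) has axes parallel to the standard basis at every $t$, so coordinates and projections taken in it coincide with those in the fixed frame at $O$. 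Hence $\overrightarrow{PS}_A$, $\overrightarrow{PS}_B$, $\overrightarrow{PS}_C$ are exactly $\{u,v,0\}$, $\{u,0,w\}$, $\{0,v,w\}$. By the standing convention these are nonzero.

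For (i), I will read off from (\ref{xxsecspa30}) that $\mathcal{D}=\xi(t)=\sqrt{u^2+v^2+w^2}$. Differentiating gives $\mathcal{D}'=(uu'+vv'+ww')/\sqrt{u^2+v^2+w^2}$. Substituting $u'=x_2'-x_1'$ and the analogous expressions for $v'$ and $w'$ yields the stated formula.

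For (ii), I will write
$$\overrightarrow{e}_{\overrightarrow{PS}_A}=\frac{\{u,v,0\}}{\sqrt{u^2+v^2}}.$$
The differentiation in (\ref{xxsecpla4}) is purely algebraic in a $C^1$ pair $(x,y)$ that does not vanish simultaneously. Repeating it verbatim with $(x,y)$ replaced by $(u,v)$ gives
$$\frac{\mathrm{d}\overrightarrow{e}_{\overrightarrow{PS}_A}}{\mathrm{d}t}=\frac{(uv'-u'v)\{-v,u,0\}}{(u^2+v^2)^{3/2}}.$$
Its norm is $|uv'-u'v|/(u^2+v^2)$, and this becomes the claimed expression after substituting back. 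Parts (iii) and (iv) follow identically using the pairs $(u,w)$ and $(v,w)$, with the third component placed appropriately, exactly as in Proposition \ref{xxsec3.1}(iii)--(iv).

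There is no real obstacle here. The only point needing care is justifying that the moving frame (\ref{xxsecspa28}) does not affect the derivative, that is, that no extra rotation term appears. This holds because $\overrightarrow{i},\overrightarrow{j},\overrightarrow{k}$ are constant vectors and only the origin $P(t)$ translates. Differentiating the unit vectors is therefore ordinary differentiation in $\mathbb{R}^3$.
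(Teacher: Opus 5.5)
Your proposal is correct and follows the paper's own proof: part (i) is read off from $\mathcal{D}=\xi(t)$ in (\ref{xxsecspa30}), and parts (ii)--(iv) repeat the computation of Proposition \ref{xxsec3.1}(ii), that is (\ref{xxsecpla4}), with the coordinate differences in place of the coordinates. Your extra remark that the frame (\ref{xxsecspa28}) only translates, so no additional rotation term appears, justifies a point the paper leaves implicit.
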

\begin{proof}

(i) This follows immediately from  $$ \mathcal{D}= \sqrt{(x_2(t)-x_1(t))^2+(y_2(t)-y_1(t))^2+(z_2(t)-z_1(t))^2 },$$
which is equal to $\xi(t)$ in (\ref{xxsecspa30}).

(ii) Note that $$\overrightarrow{e}_{\overrightarrow{PS}_{A}}= \frac{\{x_2(t)-x_1(t),y_2(t)-y_1(t),0\}}{\sqrt{(x_2(t)-x_1(t))^2+(y_2(t)-y_1(t))^2}},$$
so it can be seen from the calculation process of Proposition \ref{xxsec3.1}(ii) that the conclusion holds.

(iii)-(iv) The proof is similar to that of (ii).
\end{proof}

\section{The behaviour of moving points on curves lying on surfaces}

\subsection{Curves on  surfaces with general rotating frames}

Let
\begin{equation}\label{xxsecsur1}
  \overrightarrow{r}(u,v)=\{x(u,v),y(u,v),z(u,v)\}, \ \ \ \ (u,v)\in D
\end{equation}
be a  surface of class $C^2$, and let
\begin{equation}\label{xxsecsur2} u=u(t), v=v(t), \ \ \ t_0 < t<  t_1\end{equation}
be the equation of a curve  lying on (\ref{xxsecsur1}).  Set up a rotating frame at $O(0,0,0)$:
\begin{equation}\label{xxsecsur3}
 \{O;\overrightarrow{e}_1(u(t),v(t)), \overrightarrow{e}_2(u(t),v(t)), \overrightarrow{e}_3(u(t),v(t)) \}, \ \ \ t_0 < t<  t_1,
\end{equation}
where $\overrightarrow{e}_1(u(t),v(t)) =\frac{\overrightarrow{r}(u(t),v(t))}{\sqrt{\overrightarrow{r}^2(u(t),v(t))}}$, $\overrightarrow{e}_i(u(t),v(t))$\,($i=1,2,3$) form a  a right-handed Cartesian frame. If the coordinates of (\ref{xxsecsur3}) are denoted by $\xi, \eta, \zeta$, then the equation of (\ref{xxsecsur2}) under (\ref{xxsecsur3}) is
\begin{equation}\label{xxsecsur4}
  \xi(t)= \sqrt{\overrightarrow{r}^2(u(t),v(t)) },  \ \  \eta(t)=0, \ \  \zeta(t)=0.
\end{equation}

\textbf{Note:} In this subsection, by default, $t_0 <t< t_1$, and (\ref{xxsecsur1}) does not pass through $(0,0,0)$.

Now we are in a position to describe the behaviour of the moving points on (\ref{xxsecsur2}).
\begin{proposition}\label{xxsec4.1} Let $P$ be a moving point on (\ref{xxsecsur2}), and let its parameter be $t$.
\begin{enumerate}
  \item[$\mathrm{(i)}$]  For $\mathcal{D}=|\overrightarrow{OP}|$, we have
  \begin{align}
\frac{ \mathrm{d}\mathcal{D}}{\mathrm{d}t}  =&\frac{\overrightarrow{r} (\overrightarrow{r}_uu'+
 \overrightarrow{r}_vv')}{ \sqrt{\overrightarrow{r}^2 } }
.\notag
\end{align}

\item[$\mathrm{(ii)}$] The rotational speed of $\overrightarrow{OP}_{A}=\{x(u(t),v(t)),y(u(t),v(t)),0\}$ with respect to $t$ is $$| \frac{-\overrightarrow{r}_{A}\cdot(\frac{\partial \overrightarrow{r}_{A}}{\partial u}u'+
 \frac{\partial \overrightarrow{r}_{A}}{\partial v}v')\overrightarrow{r}_{A}+(\frac{\partial \overrightarrow{r}_{A}}{\partial u}u'+
 \frac{\partial \overrightarrow{r}_{A}}{\partial v}v')\overrightarrow{r}_{A}^2}{(\overrightarrow{r}_{A}^2)^{3/2}}|.$$
    \item[$\mathrm{(iii)}$] The rotational speed of  $\overrightarrow{OP}_{B}=\{x(u(t),v(t)),0,z(u(t),v(t))\}$ with respect to $t$ is $$| \frac{-\overrightarrow{r}_{B}\cdot(\frac{\partial \overrightarrow{r}_{B}}{\partial u}u'+
 \frac{\partial \overrightarrow{r}_{B}}{\partial v}v')\overrightarrow{r}_{B}+(\frac{\partial \overrightarrow{r}_{B}}{\partial u}u'+
 \frac{\partial \overrightarrow{r}_{B}}{\partial v}v')\overrightarrow{r}_{B}^2}{(\overrightarrow{r}_{B}^2)^{3/2}}|.$$
        \item[$\mathrm{(iv)}$] The rotational speed of  $\overrightarrow{OP}_{C}=\{0,y(u(t),v(t)),z(u(t),v(t))\}$ with respect to $t$ is $$| \frac{-\overrightarrow{r}_{C}\cdot(\frac{\partial \overrightarrow{r}_{C}}{\partial u}u'+
 \frac{\partial \overrightarrow{r}_{C}}{\partial v}v')\overrightarrow{r}_{C}+(\frac{\partial \overrightarrow{r}_{C}}{\partial u}u'+
 \frac{\partial \overrightarrow{r}_{C}}{\partial v}v')\overrightarrow{r}_{C}^2}{(\overrightarrow{r}_{C}^2)^{3/2}}|.$$
\end{enumerate}
\end{proposition}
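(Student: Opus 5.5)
The plan is to treat the curve on the surface as a space curve and reuse the computations of Proposition \ref{xxsec3.1}. The key observation is that the composite
$$\overrightarrow{R}(t)=\overrightarrow{r}(u(t),v(t))$$
is a $C^1$ space curve. By the chain rule,
$$\overrightarrow{R}'(t)=\overrightarrow{r}_u u'(t)+\overrightarrow{r}_v v'(t).$$
Once every $t$-derivative is written in this form, each item of the proposition becomes a routine substitution.

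For (i), I would use (\ref{xxsecsur4}): $\mathcal{D}=\xi(t)=\sqrt{\overrightarrow{r}^2(u(t),v(t))}$. Differentiating gives
$$\frac{\mathrm{d}\mathcal{D}}{\mathrm{d}t}=\frac{1}{2}(\overrightarrow{r}^2)^{-\frac12}\cdot 2\,\overrightarrow{r}\cdot\overrightarrow{R}'(t).$$
Inserting the chain-rule expression for $\overrightarrow{R}'$ yields the stated formula. This is the same argument as in Proposition \ref{xxsec3.1}(i), with $\{x',y',z'\}$ replaced by $\overrightarrow{r}_uu'+\overrightarrow{r}_vv'$.

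For (ii), set $\overrightarrow{r}_{A}=\{x(u,v),y(u,v),0\}$, which is nonzero by the standing convention on projective vectors. Then $\overrightarrow{e}_{\overrightarrow{OP}_A}=\overrightarrow{r}_A/\sqrt{\overrightarrow{r}_A^2}$. I would differentiate this in vector form, exactly as in the first lines of (\ref{xxsecspa10}):
$$\frac{\mathrm{d}\overrightarrow{e}}{\mathrm{d}t}=-(\overrightarrow{r}_A^2)^{-\frac32}\bigl(\overrightarrow{r}_A\cdot\overrightarrow{r}_A'\bigr)\overrightarrow{r}_A+(\overrightarrow{r}_A^2)^{-\frac12}\overrightarrow{r}_A'.$$
Putting both terms over the common factor $(\overrightarrow{r}_A^2)^{-3/2}$ gives
$$\frac{-\overrightarrow{r}_A\cdot\overrightarrow{r}_A'\,\overrightarrow{r}_A+\overrightarrow{r}_A^2\,\overrightarrow{r}_A'}{(\overrightarrow{r}_A^2)^{3/2}}.$$
Here the chain rule gives
$$\overrightarrow{r}_A'=\frac{\partial\overrightarrow{r}_A}{\partial u}u'+\frac{\partial\overrightarrow{r}_A}{\partial v}v'.$$
Taking the norm gives the stated rotational speed.

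Items (iii) and (iv) follow by the same computation with $\overrightarrow{r}_B=\{x,0,z\}$ and $\overrightarrow{r}_C=\{0,y,z\}$.

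There is no real obstacle. The only point needing care is to keep the result in the unsimplified vector form the statement uses, rather than collapsing it to the scalar $|x\dot y-\dot x y|/(x^2+y^2)$ form of Proposition \ref{xxsec3.1}. As a consistency check, one can verify that the two forms agree, since the vector inside the absolute value equals $(x\dot y-\dot xy)\{-y,x,0\}$.
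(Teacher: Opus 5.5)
Your proposal is correct and follows the paper's proof essentially line for line. Part (i) is read off from $\mathcal{D}=\xi(t)=\sqrt{\overrightarrow{r}^2(u(t),v(t))}$ via the chain rule, and (ii)--(iv) differentiate the unit vector $\overrightarrow{r}_A/\sqrt{\overrightarrow{r}_A^2}$ (resp.\ with $\overrightarrow{r}_B$, $\overrightarrow{r}_C$), combine the two terms over $(\overrightarrow{r}_A^2)^{3/2}$, and take the norm; your check against the scalar form of Proposition \ref{xxsec3.1} is an extra sanity check not in the paper, and it is correct.
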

\begin{proof}

(i) This follows easily from the fact $\mathcal{D}=\sqrt{\overrightarrow{r}^2(u(t),v(t)) }$, which is equal to $\xi(t)$ in (\ref{xxsecsur4}).

(ii) The following is the proof process:$$\overrightarrow{e}_{\overrightarrow{OP}_{A}}= \frac{\overrightarrow{OP}_{A}}{\sqrt{\overrightarrow{OP}_{A}^2}}= \frac{\overrightarrow{r}_{A}}{\sqrt{\overrightarrow{r}_{A}^2}},$$
\begin{align}&\frac{\mathrm{d} \overrightarrow{e}_{\overrightarrow{OP}_{A}}}{\mathrm{d} t}\notag\\
 =&-\frac{1}{2}(\overrightarrow{r}_{A}^2)^{-\frac{3}{2}}[ 2\overrightarrow{r}_{A} \cdot(\frac{\partial \overrightarrow{r}_{A}}{\partial u}u'+
 \frac{\partial \overrightarrow{r}_{A}}{\partial v}v')]\overrightarrow{r}_{A}+ \frac{\frac{\partial \overrightarrow{r}_{A}}{\partial u}u'+
 \frac{\partial \overrightarrow{r}_{A}}{\partial v}v'}{\sqrt{\overrightarrow{r}_{A}^2}}\notag\\
 =&-(\overrightarrow{r}_{A}^2)^{-\frac{3}{2}}[\overrightarrow{r}_{A}\cdot(\frac{\partial \overrightarrow{r}_{A}}{\partial u}u'+
 \frac{\partial \overrightarrow{r}_{A}}{\partial v}v')]\overrightarrow{r}_{A}\notag\\&+ \frac{(\frac{\partial \overrightarrow{r}_{A}}{\partial u}u'+
 \frac{\partial \overrightarrow{r}_{A}}{\partial v}v')\overrightarrow{r}_{A}^2}{(\overrightarrow{r}_{A}^2)^{3/2}}\notag\\
  =&\frac{[-\overrightarrow{r}_{A} \cdot(\frac{\partial \overrightarrow{r}_{A}}{\partial u}u'+
 \frac{\partial \overrightarrow{r}_{A}}{\partial v}v')]\overrightarrow{r}_{A}+(\frac{\partial \overrightarrow{r}_{A}}{\partial u}u'+
 \frac{\partial \overrightarrow{r}_{A}}{\partial v}v')\overrightarrow{r}_{A}^2}{(\overrightarrow{r}_{A}^2)^{3/2}}, \notag
\end{align}
$$|\frac{\mathrm{d} \overrightarrow{e}_{\overrightarrow{OP}_{A}}}{\mathrm{d} t}|=| \frac{-\overrightarrow{r}_{A}\cdot(\frac{\partial \overrightarrow{r}_{A}}{\partial u}u'+
 \frac{\partial \overrightarrow{r}_{A}}{\partial v}v')\overrightarrow{r}_{A}+(\frac{\partial \overrightarrow{r}_{A}}{\partial u}u'+
 \frac{\partial \overrightarrow{r}_{A}}{\partial v}v')\overrightarrow{r}_{A}^2}{(\overrightarrow{r}_{A}^2)^{3/2}}|.$$

(iii)-(iv) The proof is similar to that of (ii).

\end{proof}

\subsection{Curves on  surfaces with local rotating frames}

Let $$P(x(u(t),v(t)),y(u(t),v(t)),z(u(t),v(t)))$$ be a point on (\ref{xxsecsur2}). We can set up a rotating frame at $P$ as follows:
\begin{equation}\label{xxsecsur5}
 \{P;\overrightarrow{e}_1(\Delta t), \overrightarrow{e}_2(\Delta t), \overrightarrow{e}_3(\Delta t) \},
\end{equation}
where \begin{align}\overrightarrow{e}_1(\Delta t)
= \frac{\overrightarrow{r}((t+\Delta t),v(t+\Delta t))-\overrightarrow{r}(u(t),v(t))}{|\overrightarrow{r}((t+\Delta t),v(t+\Delta t))-\overrightarrow{r}(u(t),v(t))|}, \notag\end{align}
 $\overrightarrow{e}_i(\Delta t)$\,($i=1,2,3$)   form a right-handed Cartesian frame, and where $\Delta t> 0$. If the coordinates of (\ref{xxsecsur5}) are $\xi_P, \eta_P, \zeta_P$, then the equation of (\ref{xxsecsur2}) near point $P$ under (\ref{xxsecsur5}) can be written as
\begin{equation}\label{xxsecsur6}
  \xi_P(\Delta t)= \sqrt{[\overrightarrow{r}((t+\Delta t),v(t+\Delta t))-\overrightarrow{r}(u(t),v(t))]^2 },  \ \  \eta_P(\Delta t)=0, \ \  \zeta_P(\Delta t)=0,
\end{equation}
where $\Delta t>0$.

\textbf{Note:} In this subsection, we assume by default that $t_0 <t< t_1$, $\Delta t> 0$  is small enough, and $\overrightarrow{r}'(u(t),v(t))\wedge\overrightarrow{r}''(u(t),v(t))\cdot\overrightarrow{r}'''(u(t),v(t))\neq 0$, $t_0 <t< t_1$. Moreover, (\ref{xxsecsur1}) is of class $C^3$, and the coordinate net  is regular.

Let $Q$ be a point on (\ref{xxsecsur2}) near $P$. We begin by investigating  its linear motion.
\begin{proposition}\label{xxsec4.2} Let $Q(\Delta t)$ be a moving point on (\ref{xxsecsur2}) near  $P$, and let $\mathcal{D}_P=|\overrightarrow{PQ}|$. Then we have
\begin{align} \frac{ \mathrm{d}\mathcal{D}_P}{\mathrm{d}\Delta t}  =&|\overrightarrow{r}(u(t+\Delta t),v(t+\Delta t))-\overrightarrow{r}(u(t),v(t))|^{-1}\notag\\
&\times[\overrightarrow{r}(u(t+\Delta t),v(t+\Delta t))-\overrightarrow{r}(u(t),v(t))]\notag\\
&\cdot [\overrightarrow{r}_u(u(t+\Delta t),v(t+\Delta t))u'(t+\Delta t)\notag\\&\ \ \ +
\overrightarrow{r}_v(u(t+\Delta t),v(t+\Delta t))v'(t+\Delta t)] ,\ \ \ \Delta t>0 ;\notag
\end{align}
 \begin{align}
\mathcal{D}_{P+}'(0)=&|\overrightarrow{r}_u(u(t),v(t))u'(t)+\overrightarrow{r}_v(u(t),v(t))v'(t)|.\notag
\end{align}

\end{proposition}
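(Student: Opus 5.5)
The plan is to reduce the statement to Proposition \ref{xxsec3.4} by viewing the curve (\ref{xxsecsur2}) as a space curve. Set $\overrightarrow{R}(t)=\overrightarrow{r}(u(t),v(t))$. Because (\ref{xxsecsur1}) is of class $C^3$ and $u,v$ are (at least) $C^2$, $\overrightarrow{R}$ is a $C^2$ space curve. Its components $X(t)=x(u(t),v(t))$, $Y(t)$, $Z(t)$ play the roles of $x(t),y(t),z(t)$ in (\ref{xxsecspa1}). By the chain rule,
\begin{align}
\overrightarrow{R}'(t)=\overrightarrow{r}_u(u(t),v(t))u'(t)+\overrightarrow{r}_v(u(t),v(t))v'(t).\notag
\end{align}

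For the case $\Delta t>0$, I will use (\ref{xxsecsur6}), which gives $\mathcal{D}_P=\xi_P(\Delta t)=|\overrightarrow{R}(t+\Delta t)-\overrightarrow{R}(t)|$. This is nonzero for small $\Delta t>0$, since the curve is simple. Differentiating $\sqrt{F\cdot F}$ with $F(\Delta t)=\overrightarrow{R}(t+\Delta t)-\overrightarrow{R}(t)$ gives $\frac{\mathrm{d}\mathcal{D}_P}{\mathrm{d}\Delta t}=|F|^{-1}F\cdot F'(\Delta t)$. Here $F'(\Delta t)=\overrightarrow{R}'(t+\Delta t)$, which the chain-rule formula above, evaluated at $t+\Delta t$, turns into exactly the stated expression. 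Equivalently, this is the first line of Proposition \ref{xxsec3.4} with $f_i$ replaced by the components of $F$.

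For the one-sided derivative at $0$, I will repeat the Taylor argument from the proof of Proposition \ref{xxsec3.4}, this time in vector form: $F(\Delta t)=\overrightarrow{R}'(t)\Delta t+o(\Delta t)$. Then $\mathcal{D}_P(\Delta t)/\Delta t=|\overrightarrow{R}'(t)+o(1)|\to|\overrightarrow{R}'(t)|$, and $\mathcal{D}_P(0)=0$. Substituting the chain-rule expression gives $\mathcal{D}_{P+}'(0)=|\overrightarrow{r}_uu'+\overrightarrow{r}_vv'|$. Alternatively, I can apply Proposition \ref{xxsec3.4} directly to $\overrightarrow{R}$, obtain $\sqrt{X'^2+Y'^2+Z'^2}=|\overrightarrow{R}'(t)|$, and then use the chain rule.

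The main obstacle is only a bookkeeping point: checking that the hypotheses of Proposition \ref{xxsec3.4} hold for the composite curve. Only first-order Taylor expansion and $C^1$ regularity of $\overrightarrow{R}$ are used in the argument, so the stronger standing assumptions ($C^3$ and the triple-product condition) are not needed here. Two facts must be confirmed: $F\neq\overrightarrow{0}$ for small $\Delta t>0$, so the quotient is defined, and the $o(\Delta t)$ remainder holds componentwise by the mean value theorem, as in (\ref{xxsecpla14}).
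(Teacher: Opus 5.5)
Your proposal is correct and is essentially the paper's proof. For $\Delta t>0$ you differentiate $\mathcal{D}_P=\xi_P(\Delta t)$ from (\ref{xxsecsur6}) by the chain rule, and for $\mathcal{D}_{P+}'(0)$ you compute $\lim_{\Delta t\rightarrow 0^+}\mathcal{D}_P(\Delta t)/\Delta t$ from a Taylor expansion; your first-order expansion suffices where the paper uses the third-order formula (\ref{xxsecsur7}).

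One small correction to your bookkeeping: simplicity of (\ref{xxsecsur2}) is not explicitly assumed in \S 4, so do not rest $\overrightarrow{r}(u(t+\Delta t),v(t+\Delta t))\neq\overrightarrow{r}(u(t),v(t))$ for small $\Delta t>0$ on it. Justify it instead by regularity, $\overrightarrow{R}'(t)\neq\overrightarrow{0}$, which follows from the standing triple-product assumption. So that assumption is not entirely superfluous here, contrary to your remark.
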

\begin{proof} Since $\mathcal{D}_P=\sqrt{[\overrightarrow{r}((t+\Delta t),v(t+\Delta t))-\overrightarrow{r}(u(t),v(t))]^2 }$\,(equals to $\xi_P(\Delta t)$ in (\ref{xxsecsur6})), we have
\begin{align}\frac{ \mathrm{d}\mathcal{D}_P}{\mathrm{d}\Delta t}=&\frac{1}{2}[|\overrightarrow{r}(u(t+\Delta t),v(t+\Delta t))-\overrightarrow{r}(u(t),v(t))|^2]^{-\frac{1}{2}}\notag\\
&\times2[\overrightarrow{r}(u(t+\Delta t),v(t+\Delta t))-\overrightarrow{r}(u(t),v(t))]\notag\\
&\cdot [\overrightarrow{r}_u(u(t+\Delta t),v(t+\Delta t))u'(t+\Delta t)+
\overrightarrow{r}_v(u(t+\Delta t),v(t+\Delta t))v'(t+\Delta t)]\notag\\
=&|\overrightarrow{r}(u(t+\Delta t),v(t+\Delta t))-\overrightarrow{r}(u(t),v(t))|^{-1}\notag\\
&\times[\overrightarrow{r}(u(t+\Delta t),v(t+\Delta t))-\overrightarrow{r}(u(t),v(t))]\notag\\
&\cdot [\overrightarrow{r}_u(u(t+\Delta t),v(t+\Delta t))u'(t+\Delta t)+
\overrightarrow{r}_v(u(t+\Delta t),v(t+\Delta t))v'(t+\Delta t)],\notag\end{align}
where $\Delta t>0$.

Next let us consider $\Delta t\rightarrow 0^+$.  Before further calculation, we first provide the following Taylor formula:
\begin{align}&\overrightarrow{r}u((t+\Delta t),v(t+\Delta t))-\overrightarrow{r}(u(t),v(t))\notag\\
=&\overrightarrow{r}'(u(t),v(t))\Delta t+\frac{1}{2}\overrightarrow{r}''(u(t),v(t))\Delta t^2+\frac{1}{6}(\overrightarrow{r}'''(u(t),v(t))+\overrightarrow{\varepsilon})\Delta t^3,\label{xxsecsur7}
\end{align}
where $\lim_{\Delta t\rightarrow 0^+}\overrightarrow{\varepsilon}=\overrightarrow{0}$.
Now one can compute
\begin{align}\mathcal{D}_{P+}'(0)
 &=\lim_{\Delta t\rightarrow 0^+}\frac{\mathcal{D}_P(\Delta t)-\mathcal{D}_P(0)}{\Delta t}\notag\\
  &=\lim_{\Delta t\rightarrow 0^+}\frac{\sqrt{[\overrightarrow{r}'(u(t),v(t))\Delta t+\frac{1}{2}\overrightarrow{r}''(u(t),v(t))\Delta t^2+\frac{1}{6}(\overrightarrow{r}'''(u(t),v(t))+\overrightarrow{\varepsilon})\Delta t^3]^2}}{\Delta t}\notag\\
 &=\sqrt{\overrightarrow{r}'^2(u(t),v(t))} \notag\\
 &=\sqrt{[\overrightarrow{r}_u(u(t),v(t))u'(t)+\overrightarrow{r}_v(u(t),v(t))v'(t)]^2},\notag
\end{align}
where $\mathcal{D}_P(0)=0$.
\end{proof}

\textbf{Note:} We need to provide necessary explanations for some symbols that will be used subsequently. $u^1,u^2$ denote $u,v$, respectively; $\overrightarrow{r}_1=\overrightarrow{r}_{u^1}$, $\overrightarrow{r}_2=\overrightarrow{r}_{u^2}$; $g_{ij}$ the first fundamental form coefficients, $(g^{ij})$ the inverse matrix of $g_{ij}$\,($i, j=1, 2$); $L_{ij}$\,($i, j=1, 2$) the second fundamental form coefficients; $\Gamma_{ij}^k$\,($i, j, k=1, 2$) the second kind Christoffel symbols. These are consistent with those in standard differential geometry textbooks (cf. \cite{MeiHuang}).

We now proceed to analyze $\mathcal{D}_{P+}'(0)$ further.
\begin{align}(\mathcal{D}_{P+}'(0))^2=
 (\overrightarrow{r}_uu'(t)+\overrightarrow{r}_vv'(t))^2=\overrightarrow{r}_u^2u'^2(t)+2\overrightarrow{r}_u\cdot\overrightarrow{r}_vu'(t)v'(t)+\overrightarrow{r}_v^2v'^2(t).\notag
\end{align}
The above formula can be written in the form
\begin{align} \mathrm{d}\overrightarrow{r}^2=\overrightarrow{r}_u^2\mathrm{d}u^2+
2\overrightarrow{r}_u\cdot\overrightarrow{r}_v\mathrm{d}u\mathrm{d}v+\overrightarrow{r}_v^2\mathrm{d}v^2,\notag
\end{align}
which is just the first fundamental form of (\ref{xxsecsur1}),  and can also be rewritten as
\begin{align} \mathrm{I}=\sum_{i,j=1}^2g_{ij}\mathrm{d}u^i\mathrm{d}u^j.\notag
\end{align}
This means we can get the  first fundamental form of a surface through a way different from calculating the arc length.

Let $Q$ be as above, and let us now turn to the discussion of the projective rotational speed of $\overrightarrow{PQ}$.  Therefore, besides  (\ref{xxsecsur5}), we also need to construct another transitional frame:
\begin{equation}\label{xxsecsur8}
 \{P;\overrightarrow{r}'(u(t),v(t)),\overrightarrow{r}''(u(t),v(t)),\overrightarrow{r}'''(u(t),v(t)) \}, \ \ \ t_0 < t< t_1.
\end{equation}
Under frame (\ref{xxsecsur8}), the equation of (\ref{xxsecsur2}) near point $P$ can take the following form:
\begin{align}&\overrightarrow{r}((u(t+\Delta t),v(t+\Delta t)))-\overrightarrow{r}(u(t),v(t))\notag\\=&g_1(\Delta t)\overrightarrow{r}'(u(t),v(t))+g_2(\Delta t)\overrightarrow{r}''(u(t),v(t))+g_3(\Delta t)\overrightarrow{r}'''(u(t),v(t)).\label{xxsecsur9}
\end{align}

Before proceeding, let us give the following fundamental equations of the natural frame (cf. \cite{MeiHuang} etc.):
\begin{align}\overrightarrow{r}_{ij}=&\sum_{k}\Gamma_{ij}^k\overrightarrow{r}_{k}+L_{ij}\overrightarrow{n},\
\overrightarrow{n}_{i}=-\sum_{j,k}L_{ik}g^{kj}\overrightarrow{r}_{j}.\notag\end{align}
Next we can do the following computation:
\begin{align}\overrightarrow{r}'(u(t),v(t))
=&\sum_i\frac{\mathrm{d}u^i}{\mathrm{d}t}\overrightarrow{r}_i,\label{xxsecsur10}\\
\overrightarrow{r}''(u(t),v(t))=
&\sum_{i,j}\frac{\mathrm{d}u^i}{\mathrm{d}t}\frac{\mathrm{d}u^j}{\mathrm{d}t}\overrightarrow{r}_{ij}+\sum_i\frac{\mathrm{d}^2u^i}{\mathrm{d}t^2}\overrightarrow{r}_i\notag\\
=&\sum_{i,j,k}\Gamma_{ij}^k\frac{\mathrm{d}u^i}{\mathrm{d}t}\frac{\mathrm{d}u^j}{\mathrm{d}t}\overrightarrow{r}_k
+\sum_{i,j}L_{ij}\frac{\mathrm{d}u^i}{\mathrm{d}t}\frac{\mathrm{d}u^j}{\mathrm{d}t}\overrightarrow{n}+\sum_i\frac{\mathrm{d}^2u^i}{\mathrm{d}t^2}\overrightarrow{r}_i
, \label{xxsecsur11}\\
\overrightarrow{r}'''(u(t),v(t))
=&
\sum_{i,j,k,l}(\Gamma_{ij}^k)_l\frac{\mathrm{d}u^i}{\mathrm{d}t}\frac{\mathrm{d}u^j}{\mathrm{d}t}\frac{\mathrm{d}u^l}{\mathrm{d}t}\overrightarrow{r}_k
+2\sum_{i,j,k}\Gamma_{ij}^k\frac{\mathrm{d}^2u^i}{\mathrm{d}t^2}\frac{\mathrm{d}u^j}{\mathrm{d}t}\overrightarrow{r}_k
+\sum_{i,j,k}\Gamma_{ij}^k\frac{\mathrm{d}u^i}{\mathrm{d}t}\frac{\mathrm{d}^2u^j}{\mathrm{d}t^2}\overrightarrow{r}_k\notag\\
&+\sum_{i,j,k,l,m}\Gamma_{ij}^m\Gamma_{ml}^k\frac{\mathrm{d}u^i}{\mathrm{d}t}\frac{\mathrm{d}u^j}{\mathrm{d}t}\frac{\mathrm{d}u^l}{\mathrm{d}t}\overrightarrow{r}_{k}
-\sum_{i,j,k,l,m}L_{ij}L_{lm}g^{mk}\frac{\mathrm{d}u^i}{\mathrm{d}t}\frac{\mathrm{d}u^j}{\mathrm{d}t}
\frac{\mathrm{d}u^l}{\mathrm{d}t}\overrightarrow{r}_{k}\notag\\
&
+\sum_k\frac{\mathrm{d}^3u^k}{\mathrm{d}t^3}\overrightarrow{r}_k+\sum_{i,j,k,l}\Gamma_{ij}^kL_{kl}\frac{\mathrm{d}u^i}{\mathrm{d}t}\frac{\mathrm{d}u^j}{\mathrm{d}t}\frac{\mathrm{d}u^l}{\mathrm{d}t}\overrightarrow{n}+\sum_{i,j,k}(L_{ij})_k\frac{\mathrm{d}u^i}{\mathrm{d}t}\frac{\mathrm{d}u^j}{\mathrm{d}t}\frac{\mathrm{d}u^k}{\mathrm{d}t}\overrightarrow{n}\notag\\
&+2\sum_{i,j}L_{ij}\frac{\mathrm{d}^2u^i}{\mathrm{d}t^2}\frac{\mathrm{d}u^j}{\mathrm{d}t}\overrightarrow{n}+\sum_{i,j}L_{ij}\frac{\mathrm{d}u^i}{\mathrm{d}t}\frac{\mathrm{d}^2u^j}{\mathrm{d}t^2}\overrightarrow{n}
.\label{xxsecsur12}
\end{align}
Since the computational process of (\ref{xxsecsur12}) is lengthy, we present it in Appendix A. Using (\ref{xxsecsur10})-(\ref{xxsecsur12}), we can rewrite (\ref{xxsecsur9}) as
\begin{align}&\overrightarrow{r}((u(t+\Delta t),v(t+\Delta t)))-\overrightarrow{r}(u(t),v(t))\notag\\
=&g_1(\Delta t)\sum_k\frac{\mathrm{d}u^k}{\mathrm{d}t}\overrightarrow{r}_k+g_2(\Delta t)\sum_{i,j,k}\Gamma_{ij}^k\frac{\mathrm{d}u^i}{\mathrm{d}t}\frac{\mathrm{d}u^j}{\mathrm{d}t}\overrightarrow{r}_k
\notag\\&
+g_2(\Delta t)\sum_k\frac{\mathrm{d}^2u^k}{\mathrm{d}t^2}\overrightarrow{r}_k
+g_3(\Delta t)\sum_{i,j,k,l}(\Gamma_{ij}^k)_l\frac{\mathrm{d}u^i}{\mathrm{d}t}\frac{\mathrm{d}u^j}{\mathrm{d}t}\frac{\mathrm{d}u^l}{\mathrm{d}t}\overrightarrow{r}_k\notag\\
&
+2g_3(\Delta t)\sum_{i,j,k}\Gamma_{ij}^k\frac{\mathrm{d}^2u^i}{\mathrm{d}t^2}\frac{\mathrm{d}u^j}{\mathrm{d}t}\overrightarrow{r}_k
+g_3(\Delta t)\sum_{i,j,k}\Gamma_{ij}^k\frac{\mathrm{d}u^i}{\mathrm{d}t}\frac{\mathrm{d}^2u^j}{\mathrm{d}t^2}\overrightarrow{r}_k\notag\\
&+g_3(\Delta t)\sum_{i,j,k,l,m}\Gamma_{ij}^m\Gamma_{ml}^k\frac{\mathrm{d}u^i}{\mathrm{d}t}\frac{\mathrm{d}u^j}{\mathrm{d}t}\frac{\mathrm{d}u^l}{\mathrm{d}t}\overrightarrow{r}_{k}
-g_3(\Delta t)\sum_{i,j,k,l,m}L_{ij}L_{lm}g^{mk}\frac{\mathrm{d}u^i}{\mathrm{d}t}\frac{\mathrm{d}u^j}{\mathrm{d}t}
\frac{\mathrm{d}u^l}{\mathrm{d}t}\overrightarrow{r}_{k}\notag\\
&
+g_3(\Delta t)\sum_k\frac{\mathrm{d}^3u^k}{\mathrm{d}t^3}\overrightarrow{r}_k
+g_2(\Delta t)\sum_{i,j}L_{ij}\frac{\mathrm{d}u^i}{\mathrm{d}t}\frac{\mathrm{d}u^j}{\mathrm{d}t}\overrightarrow{n}\notag\\
&+g_3(\Delta t)\sum_{i,j,k,l}\Gamma_{ij}^kL_{kl}\frac{\mathrm{d}u^i}{\mathrm{d}t}\frac{\mathrm{d}u^j}{\mathrm{d}t}\frac{\mathrm{d}u^l}{\mathrm{d}t}\overrightarrow{n}
+g_3(\Delta t)\sum_{i,j,k}(L_{ij})_k\frac{\mathrm{d}u^i}{\mathrm{d}t}\frac{\mathrm{d}u^j}{\mathrm{d}t}\frac{\mathrm{d}u^k}{\mathrm{d}t}\overrightarrow{n}\notag\\
&+2g_3(\Delta t)\sum_{i,j}L_{ij}\frac{\mathrm{d}^2u^i}{\mathrm{d}t^2}\frac{\mathrm{d}u^j}{\mathrm{d}t}\overrightarrow{n}
+g_3(\Delta t)\sum_{i,j}L_{ij}\frac{\mathrm{d}u^i}{\mathrm{d}t}\frac{\mathrm{d}^2u^j}{\mathrm{d}t^2}\overrightarrow{n}\notag\\
&=\chi_1(\Delta t)\overrightarrow{r}_1+\chi_2(\Delta t)\overrightarrow{r}_2+\chi_3(\Delta t)\overrightarrow{n}
,\label{xxsecsur13}
\end{align}
where
\begin{align}
&\chi_k(\Delta t)\notag\\=&g_1(\Delta t)\frac{\mathrm{d}u^k}{\mathrm{d}t}+g_2(\Delta t)\sum_{i,j}\Gamma_{ij}^k\frac{\mathrm{d}u^i}{\mathrm{d}t}\frac{\mathrm{d}u^j}{\mathrm{d}t}
\notag\\&
+g_2(\Delta t)\frac{\mathrm{d}^2u^k}{\mathrm{d}t^2}
+g_3(\Delta t)\sum_{i,j,l}(\Gamma_{ij}^k)_l\frac{\mathrm{d}u^i}{\mathrm{d}t}\frac{\mathrm{d}u^j}{\mathrm{d}t}\frac{\mathrm{d}u^l}{\mathrm{d}t}\notag\\
&
+2g_3(\Delta t)\sum_{i,j}\Gamma_{ij}^k\frac{\mathrm{d}^2u^i}{\mathrm{d}t^2}\frac{\mathrm{d}u^j}{\mathrm{d}t}
+g_3(\Delta t)\sum_{i,j}\Gamma_{ij}^k\frac{\mathrm{d}u^i}{\mathrm{d}t}\frac{\mathrm{d}^2u^j}{\mathrm{d}t^2}\notag\\
&+g_3(\Delta t)\sum_{i,j,l,m}\Gamma_{ij}^m\Gamma_{ml}^k\frac{\mathrm{d}u^i}{\mathrm{d}t}\frac{\mathrm{d}u^j}{\mathrm{d}t}\frac{\mathrm{d}u^l}{\mathrm{d}t}
-g_3(\Delta t)\sum_{i,j,l,m}L_{ij}L_{lm}g^{mk}\frac{\mathrm{d}u^i}{\mathrm{d}t}\frac{\mathrm{d}u^j}{\mathrm{d}t}
\frac{\mathrm{d}u^l}{\mathrm{d}t} \notag\\
&
+g_3(\Delta t)\frac{\mathrm{d}^3u^k}{\mathrm{d}t^3},  k=1,2,\notag\\
&\chi_3(\Delta t)\notag\\=&g_2(\Delta t)\sum_{i,j}L_{ij}\frac{\mathrm{d}u^i}{\mathrm{d}t}\frac{\mathrm{d}u^j}{\mathrm{d}t} +g_3(\Delta t)\sum_{i,j,k,l}\Gamma_{ij}^kL_{kl}\frac{\mathrm{d}u^i}{\mathrm{d}t}\frac{\mathrm{d}u^j}{\mathrm{d}t}\frac{\mathrm{d}u^l}{\mathrm{d}t}
+g_3(\Delta t)\sum_{i,j,k}(L_{ij})_k\frac{\mathrm{d}u^i}{\mathrm{d}t}\frac{\mathrm{d}u^j}{\mathrm{d}t}\frac{\mathrm{d}u^k}{\mathrm{d}t} \notag\\
&+2g_3(\Delta t)\sum_{i,j}L_{ij}\frac{\mathrm{d}^2u^i}{\mathrm{d}t^2}\frac{\mathrm{d}u^j}{\mathrm{d}t}
+g_3(\Delta t)\sum_{i,j}L_{ij}\frac{\mathrm{d}u^i}{\mathrm{d}t}\frac{\mathrm{d}^2u^j}{\mathrm{d}t^2} .\notag
\end{align}

Next we will project $\overrightarrow{PQ}$ to the three frame planes of
\begin{equation}
 \{P;\overrightarrow{r}_1(u(t),v(t)),\overrightarrow{r}_2(u(t),v(t)),\overrightarrow{n}(u(t),v(t))\}.\notag
\end{equation}
 Let $\overrightarrow{PQ}_{\mathcal{A}}$ be the projective vector of $\overrightarrow{PQ}$ onto the  plane spanned by $\overrightarrow{r}_1,\overrightarrow{r}_2$, $\overrightarrow{PQ}_{\mathcal{B}}$ the projective vector of $\overrightarrow{PQ}$  onto the plane of $\overrightarrow{r}_1,\overrightarrow{n}$, and $\overrightarrow{PQ}_{\mathcal{C}}$ the projective vector  of $\overrightarrow{PQ}$  onto the  plane of $\overrightarrow{r}_2,\overrightarrow{r}_n$. Then we have

\begin{proposition}\label{xxsec4.3}  Let  $Q$ be a moving point on (\ref{xxsecsur2}) near $P$, and let its parameter be $\Delta t$.
\begin{enumerate}
\item[$\mathrm{1)}$]  The rotational speed of $\overrightarrow{PQ}_{\mathcal{A}}$ with respect to $\Delta t$ is
\begin{align}
&|\chi_1(\Delta t)\overrightarrow{r}_1+\chi_2(\Delta t)\overrightarrow{r}_2|^{-3}\notag\\
&\times|\chi_1^2(\Delta t)\chi_2'(\Delta t)g_{12}\overrightarrow{r}_1 +\chi_2(\Delta t)\chi_2'(\Delta t)\chi_1(\Delta t)g_{22}\overrightarrow{r}_1\notag\\
&\ \ \  -\chi_1(\Delta t)\chi_2(\Delta t)\chi_1'(\Delta t)g_{12}\overrightarrow{r}_1-\chi_2^2(\Delta t)\chi_1'(\Delta t)g_{22}\overrightarrow{r}_1\notag\\
&\ \ \ +\chi_1(\Delta t)\chi_1'(\Delta t)\chi_2(\Delta t)g_{11}\overrightarrow{r}_2+\chi_2^2(\Delta t)\chi_1'(\Delta t)g_{12}\overrightarrow{r}_2  \notag\\
&\ \ \ -\chi_1^2(\Delta t)\chi_2'(\Delta t)g_{11}\overrightarrow{r}_2-\chi_1(\Delta t)\chi_2(\Delta t)\chi_2'(\Delta t)g_{12}\overrightarrow{r}_2|
,\ \ \  \Delta t>0;  \notag
 \end{align}
 \begin{align}
&\frac{1}{2}|\sum_k\frac{\mathrm{d}u^k}{\mathrm{d}t}\overrightarrow{r}_k|^{-3}\notag\\&\times
[\sum_{p,q}\sum_{k,l}(\sum_{i,j}\Gamma_{ij}^l\frac{\mathrm{d}u^i}{\mathrm{d}t}\frac{\mathrm{d}u^j}{\mathrm{d}t}\frac{\mathrm{d}u^k}{\mathrm{d}t}\frac{\mathrm{d}u^p}{\mathrm{d}t}
+\frac{\mathrm{d}^2u^l}{\mathrm{d}t^2}\frac{\mathrm{d}u^k}{\mathrm{d}t}\frac{\mathrm{d}u^p}{\mathrm{d}t} \notag\\& -
\sum_{i,j}\Gamma_{ij}^p\frac{\mathrm{d}u^i}{\mathrm{d}t}\frac{\mathrm{d}u^j}{\mathrm{d}t}\frac{\mathrm{d}u^k}{\mathrm{d}t}\frac{\mathrm{d}u^l}{\mathrm{d}t}
-\frac{\mathrm{d}^2u^p}{\mathrm{d}t^2}\frac{\mathrm{d}u^k}{\mathrm{d}t}\frac{\mathrm{d}u^l}{\mathrm{d}t})g_{kl} \notag\\
&\times\sum_{k,l}(\sum_{i,j}\Gamma_{ij}^l\frac{\mathrm{d}u^i}{\mathrm{d}t}\frac{\mathrm{d}u^j}{\mathrm{d}t}\frac{\mathrm{d}u^k}{\mathrm{d}t}\frac{\mathrm{d}u^q}{\mathrm{d}t}
+\frac{\mathrm{d}^2u^l}{\mathrm{d}t^2}\frac{\mathrm{d}u^k}{\mathrm{d}t}\frac{\mathrm{d}u^q}{\mathrm{d}t}\notag\\& -
\sum_{i,j}\Gamma_{ij}^q\frac{\mathrm{d}u^i}{\mathrm{d}t}\frac{\mathrm{d}u^j}{\mathrm{d}t}\frac{\mathrm{d}u^k}{\mathrm{d}t}\frac{\mathrm{d}u^l}{\mathrm{d}t}
-\frac{\mathrm{d}^2u^q}{\mathrm{d}t^2}\frac{\mathrm{d}u^k}{\mathrm{d}t}\frac{\mathrm{d}u^l}{\mathrm{d}t})g_{kl} g_{pq}]^{-\frac{1}{2}},\ \ \ \Delta t\rightarrow 0^+. \notag\end{align}
    \item[$\mathrm{2)}$]  The rotational speed of $\overrightarrow{PQ}_{\mathcal{B}}$ with respect to $\Delta t$ is
\begin{align}
&|\chi_1(\Delta t)\overrightarrow{r}_1+\chi_3(\Delta t)\overrightarrow{n}|^{-3}\notag\\
&\times|\chi_3(\Delta t)\chi_3'(\Delta t)\chi_1(\Delta t)\overrightarrow{r}_1 -\chi_3^2(\Delta t)\chi_1'(\Delta t)\overrightarrow{r}_1\notag\\
&\ \ \ +\chi_1(\Delta t)\chi_1'(\Delta t)\chi_3(\Delta t)g_{11}\overrightarrow{n}-\chi_1^2(\Delta t)\chi_3'(\Delta t)g_{11}\overrightarrow{n}|
,\ \ \  \Delta t>0;  \notag
 \end{align}
 \begin{align}
&\frac{1}{2}|\frac{\mathrm{d}u^1}{\mathrm{d}t}\overrightarrow{r}_1|^{-1}
  | \sum_{i,j}L_{ij}\frac{\mathrm{d}u^i}{\mathrm{d}t}\frac{\mathrm{d}u^j}{\mathrm{d}t}|,\ \ \ \Delta t\rightarrow 0^+.  \notag\end{align}
        \item[$\mathrm{3)}$]  The rotational speed of $\overrightarrow{PQ}_{\mathcal{C}}$ with respect to $\Delta t$ is
\begin{align}
&|\chi_2(\Delta t)\overrightarrow{r}_2+\chi_3(\Delta t)\overrightarrow{n}|^{-3}\notag\\
&\times| \chi_3(\Delta t)\chi_3'(\Delta t)\chi_2(\Delta t)\overrightarrow{r}_2-\chi_3^2(\Delta t)\chi_2'(\Delta t)\overrightarrow{r}_2\notag\\
&\ \ \ +\chi_2(\Delta t)\chi_2'(\Delta t)\chi_3(\Delta t)g_{22}\overrightarrow{n}-\chi_2^2(\Delta t)\chi_3'(\Delta t)g_{22}\overrightarrow{n}|
,\ \ \  \Delta t>0;  \notag
 \end{align}
 \begin{align}
&\frac{1}{2}|\frac{\mathrm{d}u^2}{\mathrm{d}t}\overrightarrow{r}_2|^{-1}
|\sum_{i,j}L_{ij}\frac{\mathrm{d}u^i}{\mathrm{d}t}\frac{\mathrm{d}u^j}{\mathrm{d}t}|,\ \ \ \Delta t\rightarrow 0^+.  \notag\end{align}
\end{enumerate}
\end{proposition}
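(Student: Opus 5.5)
The plan is to treat Proposition \ref{xxsec4.3} exactly as Proposition \ref{xxsec3.5} was treated. The only change is the reference frame: after (\ref{xxsecsur13}) the relevant frame is the fixed (for given $t$) natural frame $\{P;\overrightarrow{r}_1,\overrightarrow{r}_2,\overrightarrow{n}\}$ at $P$, and the basis of $\{P;\overrightarrow{r}',\overrightarrow{r}'',\overrightarrow{r}'''\}$ is no longer used. Accordingly, $\overrightarrow{PQ}_{\mathcal{A}}=\chi_1\overrightarrow{r}_1+\chi_2\overrightarrow{r}_2$, $\overrightarrow{PQ}_{\mathcal{B}}=\chi_1\overrightarrow{r}_1+\chi_3\overrightarrow{n}$ and $\overrightarrow{PQ}_{\mathcal{C}}=\chi_2\overrightarrow{r}_2+\chi_3\overrightarrow{n}$. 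Throughout, $\overrightarrow{r}_i$ and $\overrightarrow{n}$ are constant in $\Delta t$.

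\textbf{Case $\Delta t>0$.} Differentiate $\overrightarrow{e}=\overrightarrow{PQ}_{\mathcal{A}}/|\overrightarrow{PQ}_{\mathcal{A}}|$ in $\Delta t$. This is literally the computation (\ref{xxsecspa10}) with $g_1,g_2,\overrightarrow{r}',\overrightarrow{r}''$ replaced by $\chi_1,\chi_2,\overrightarrow{r}_1,\overrightarrow{r}_2$. One then substitutes $\overrightarrow{r}_i\cdot\overrightarrow{r}_j=g_{ij}$, which gives the displayed expression for 1).

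For 2) and 3) the same computation applies. Here one uses $\overrightarrow{r}_i\cdot\overrightarrow{n}=0$ and $\overrightarrow{n}^2=1$, so most of the eight terms in (\ref{xxsecspa10}) vanish and only the four listed ones survive.

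\textbf{Limit $\Delta t\to0^+$.} Write $g_1,g_2,g_3$ as in (\ref{xxsecspa11}), i.e.
$g_1=\Delta t+\tfrac16\varepsilon_1\Delta t^3$, $g_2=\tfrac12\Delta t^2+\tfrac16\varepsilon_2\Delta t^3$, $g_3=\tfrac16(1+\varepsilon_3)\Delta t^3$, with $\varepsilon_i\to0$. Inserting these into the formulas for $\chi_k$ gives
\begin{align}
\chi_k&=\Delta t\, a^k+\tfrac12\Delta t^2\, b^k+o(\Delta t^2),\quad k=1,2,\notag\\
\chi_3&=\tfrac12\Delta t^2\,\Lambda+o(\Delta t^2),\notag
\end{align}
where $a^k=\frac{\mathrm{d}u^k}{\mathrm{d}t}$, $b^k=\sum_{i,j}\Gamma^k_{ij}a^ia^j+\frac{\mathrm{d}^2u^k}{\mathrm{d}t^2}$ and $\Lambda=\sum_{i,j}L_{ij}a^ia^j$.

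\textbf{Case 1).} Put $\overrightarrow{a}=\sum a^k\overrightarrow{r}_k$ and $\overrightarrow{b}=\sum b^k\overrightarrow{r}_k$. Dividing the unit vector by $\Delta t$ reproduces exactly the situation of (\ref{xxsecspa12}), so the limit is
$$\frac{\overrightarrow{a}^2\,\overrightarrow{b}-(\overrightarrow{a}\cdot\overrightarrow{b})\,\overrightarrow{a}}{2|\overrightarrow{a}|^3}.$$
Expanding its $\overrightarrow{r}_p$-component gives
$$\sum_{k,l}\bigl(b^la^ka^p-b^pa^ka^l\bigr)g_{kl}.$$
Its length is then obtained as the square root of the quadratic form with $g_{pq}$, which gives the stated formula.

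\textbf{Cases 2) and 3).} Here the unit vector is
$$\frac{(a^1+O(\Delta t))\,\overrightarrow{r}_1+\bigl(\tfrac12\Delta t\,\Lambda+o(\Delta t)\bigr)\overrightarrow{n}}{|\cdots|}.$$
Since $\overrightarrow{r}_1\perp\overrightarrow{n}$, the tangential correction contributes only along $\overrightarrow{r}_1$. The argument of (\ref{xxsecspa12}) then shows that the $\overrightarrow{r}_1$-part of the derivative is zero, and that the $\overrightarrow{n}$-part equals $\tfrac12\Lambda/|a^1\overrightarrow{r}_1|$. Case 3) is symmetric in $2$.

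\textbf{Main obstacle.} I expect the hard step to be justifying the expansions of the $\chi_k$. One must check that the $o(\Delta t^2)$ remainders, which carry the $\varepsilon_i$ multiplied by Christoffel symbols, $L_{ij}$ and their derivatives, disappear after division by $\Delta t$. In particular, the $\Delta t^3$ terms of $g_3$ must not contribute at order $\Delta t^2$. I would handle this by grouping every coefficient of $g_3$ into one bounded constant using the $C^3$ hypothesis, exactly as the $\varepsilon_i$ were absorbed in (\ref{xxsecspa12}). The argument also needs the projections to be nondegenerate at leading order, i.e.\ $\overrightarrow{a}\neq\overrightarrow{0}$ in 1) and $a^1\neq0$ (resp.\ $a^2\neq0$) in 2) (resp.\ 3)). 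This is what the standing nonvanishing assumption on projections provides.
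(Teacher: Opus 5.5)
This is essentially the paper's proof: the $\Delta t>0$ formulas come from the template (\ref{xxsecspa10}) with the natural-frame inner products $g_{ij}$, $\overrightarrow{r}_i\cdot\overrightarrow{n}=0$ and $\overrightarrow{n}^2=1$ substituted, and the limits come from inserting the expansion (\ref{xxsecsur14}) into the computation of (\ref{xxsecspa12}), which gives exactly (\ref{xxsecsur15})--(\ref{xxsecsur17}). Your $\overrightarrow{a},\overrightarrow{b},\Lambda$ bookkeeping is just a tidier version of that calculation, and the $g_3$ terms are no real obstacle, since their coefficients are constants evaluated at the fixed $t$.

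One correction: taking the square root of the $g_{pq}$-quadratic form gives the exponent $+\frac{1}{2}$ in 1). The printed $[\cdots]^{-\frac{1}{2}}$ is therefore a typo in the statement (the paper's own proof has $\{\cdots\}^{\frac{1}{2}}$ one line earlier), so your computation does not literally reproduce the stated formula.
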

\begin{proof}

By (\ref{xxsecsur13}), we can write
$$\overrightarrow{e}_{\overrightarrow{PQ}_{\mathcal{A}}} = \frac{\chi_1(\Delta t)\overrightarrow{r}_1+\chi_2(\Delta t)\overrightarrow{r}_2}{\sqrt{(\chi_1(\Delta t)\overrightarrow{r}_1+\chi_2(\Delta t)\overrightarrow{r}_2)^2 }},$$
so
\begin{align}\frac{\mathrm{d}\overrightarrow{e}_{\overrightarrow{PQ}_{\mathcal{A}}} }{\mathrm{d}\Delta t}
 =&-[(\chi_1(\Delta t)\overrightarrow{r}_1+\chi_2(\Delta t)\overrightarrow{r}_2)^2]^{-\frac{3}{2}}\notag\\
&\times(\chi_1^2(\Delta t)\chi_2'(\Delta t)g_{12}\overrightarrow{r}_1 +\chi_2(\Delta t)\chi_2'(\Delta t)\chi_1(\Delta t)g_{22}\overrightarrow{r}_1\notag\\
&\ \ \  -\chi_1(\Delta t)\chi_2(\Delta t)\chi_1'(\Delta t)g_{12}\overrightarrow{r}_1-\chi_2^2(\Delta t)\chi_1'(\Delta t)g_{22}\overrightarrow{r}_1\notag\\
&\ \ \ +\chi_1(\Delta t)\chi_1'(\Delta t)\chi_2(\Delta t)g_{11}\overrightarrow{r}_2+\chi_2^2(\Delta t)\chi_1'(\Delta t)g_{12}\overrightarrow{r}_2  \notag\\
&\ \ \ -\chi_1^2(\Delta t)\chi_2'(\Delta t)g_{11}\overrightarrow{r}_2-\chi_1(\Delta t)\chi_2(\Delta t)\chi_2'(\Delta t)g_{12}\overrightarrow{r}_2)
,\notag\end{align}
where $\Delta t>0$\,(cf. (\ref{xxsecspa10})), and so the first part of 1) can be obtained.

Next let us prove the case where $\Delta t\rightarrow0^+$. Formula (\ref{xxsecsur7}) can be rewritten as (cf. (\ref{xxsecspa11}))
\begin{align}&\overrightarrow{r}((u(t+\Delta t),v(t+\Delta t)))-\overrightarrow{r}(u(t),v(t))\notag\\=&(\Delta t+\frac{1}{6}\varepsilon_1\Delta t^3)\overrightarrow{r}'(u(t),v(t))+(\frac{1}{2}\Delta t^2+\frac{1}{6}\varepsilon_2\Delta t^3)\overrightarrow{r}''(u(t),v(t))\notag\\&+(\frac{1}{6}\Delta t^3+\frac{1}{6}\varepsilon_3\Delta t^3)\overrightarrow{r}'''(u(t),v(t)),\notag
\end{align}
where $\lim_{\Delta t\rightarrow 0^+} \{\varepsilon_1,\varepsilon_2,\varepsilon_3\}=\overrightarrow{0}$.
Taking into account (\ref{xxsecsur10})-(\ref{xxsecsur12}), we obtain
\begin{align}&\overrightarrow{r}((u(t+\Delta t),v(t+\Delta t)))-\overrightarrow{r}(u(t),v(t))\notag\\
=&(\Delta t+\frac{1}{6}\varepsilon_1\Delta t^3)\sum_k\frac{\mathrm{d}u^k}{\mathrm{d}t}\overrightarrow{r}_k
+(\frac{1}{2}\Delta t^2+\frac{1}{6}\varepsilon_2\Delta t^3)\sum_{i,j,k}\Gamma_{ij}^k\frac{\mathrm{d}u^i}{\mathrm{d}t}\frac{\mathrm{d}u^j}{\mathrm{d}t}\overrightarrow{r}_k
\notag\\
&+(\frac{1}{2}\Delta t^2+\frac{1}{6}\varepsilon_2\Delta t^3)\sum_k\frac{\mathrm{d}^2u^k}{\mathrm{d}t^2}\overrightarrow{r}_k
+(\frac{1}{6}\Delta t^3+\frac{1}{6}\varepsilon_3\Delta t^3)\sum_{i,j,k,l}(\Gamma_{ij}^k)_l\frac{\mathrm{d}u^i}{\mathrm{d}t}\frac{\mathrm{d}u^j}{\mathrm{d}t}\frac{\mathrm{d}u^l}{\mathrm{d}t}\overrightarrow{r}_k
\notag\\&+2(\frac{1}{6}\Delta t^3+\frac{1}{6}\varepsilon_3\Delta t^3)\sum_{i,j,k}\Gamma_{ij}^k\frac{\mathrm{d}^2u^i}{\mathrm{d}t^2}\frac{\mathrm{d}u^j}{\mathrm{d}t}\overrightarrow{r}_k
+(\frac{1}{6}\Delta t^3+\frac{1}{6}\varepsilon_3\Delta t^3)\sum_{i,j,k}\Gamma_{ij}^k\frac{\mathrm{d}u^i}{\mathrm{d}t}\frac{\mathrm{d}^2u^j}{\mathrm{d}t^2}\overrightarrow{r}_k\notag\\
&+(\frac{1}{6}\Delta t^3+\frac{1}{6}\varepsilon_3\Delta t^3)\sum_{i,j,k,l,m}\Gamma_{ij}^m\Gamma_{ml}^k\frac{\mathrm{d}u^i}{\mathrm{d}t}\frac{\mathrm{d}u^j}{\mathrm{d}t}\frac{\mathrm{d}u^l}{\mathrm{d}t}\overrightarrow{r}_{k}
\notag\\&-(\frac{1}{6}\Delta t^3+\frac{1}{6}\varepsilon_3\Delta t^3)\sum_{i,j,k,l,m}L_{ij}L_{lm}g^{mk}\frac{\mathrm{d}u^i}{\mathrm{d}t}\frac{\mathrm{d}u^j}{\mathrm{d}t}
\frac{\mathrm{d}u^l}{\mathrm{d}t}\overrightarrow{r}_{k}\notag\\
&
+(\frac{1}{6}\Delta t^3+\frac{1}{6}\varepsilon_3\Delta t^3)\sum_k\frac{\mathrm{d}^3u^k}{\mathrm{d}t^3}\overrightarrow{r}_k
+(\frac{1}{2}\Delta t^2+\frac{1}{6}\varepsilon_2\Delta t^3)\sum_{i,j}L_{ij}\frac{\mathrm{d}u^i}{\mathrm{d}t}\frac{\mathrm{d}u^j}{\mathrm{d}t}\overrightarrow{n}\notag\\
&+(\frac{1}{6}\Delta t^3+\frac{1}{6}\varepsilon_3\Delta t^3)\sum_{i,j,k,l}\Gamma_{ij}^kL_{kl}\frac{\mathrm{d}u^i}{\mathrm{d}t}\frac{\mathrm{d}u^j}{\mathrm{d}t}\frac{\mathrm{d}u^l}{\mathrm{d}t}\overrightarrow{n}
\notag\\&+(\frac{1}{6}\Delta t^3+\frac{1}{6}\varepsilon_3\Delta t^3)\sum_{i,j,k}(L_{ij})_k\frac{\mathrm{d}u^i}{\mathrm{d}t}\frac{\mathrm{d}u^j}{\mathrm{d}t}\frac{\mathrm{d}u^k}{\mathrm{d}t}\overrightarrow{n}\notag\\
&+2(\frac{1}{6}\Delta t^3+\frac{1}{6}\varepsilon_3\Delta t^3)\sum_{i,j}L_{ij}\frac{\mathrm{d}^2u^i}{\mathrm{d}t^2}\frac{\mathrm{d}u^j}{\mathrm{d}t}\overrightarrow{n}
+(\frac{1}{6}\Delta t^3+\frac{1}{6}\varepsilon_3\Delta t^3)\sum_{i,j}L_{ij}\frac{\mathrm{d}u^i}{\mathrm{d}t}\frac{\mathrm{d}^2u^j}{\mathrm{d}t^2}\overrightarrow{n},
\label{xxsecsur14}
\end{align}
where $\lim_{\Delta t\rightarrow 0^+} \{\varepsilon_1,\varepsilon_2,\varepsilon_3\}=\overrightarrow{0}$.

From (\ref{xxsecsur14}) we obtain
\begin{align}\overrightarrow{e}_{\overrightarrow{PQ}_{\mathcal{A}}}
& = [(1+\frac{1}{6}\varepsilon_1\Delta t^2)\sum_k\frac{\mathrm{d}u^k}{\mathrm{d}t}\overrightarrow{r}_k
+(\frac{1}{2}\Delta t+\frac{1}{6}\varepsilon_2\Delta t^2)\sum_{i,j,k}\Gamma_{ij}^k\frac{\mathrm{d}u^i}{\mathrm{d}t}\frac{\mathrm{d}u^j}{\mathrm{d}t}\overrightarrow{r}_k
\notag\\
&+(\frac{1}{2}\Delta t+\frac{1}{6}\varepsilon_2\Delta t^2)\sum_k\frac{\mathrm{d}^2u^k}{\mathrm{d}t^2}\overrightarrow{r}_k
+(\frac{1}{6}\Delta t^2+\frac{1}{6}\varepsilon_3\Delta t^2)\sum_{i,j,k,l}(\Gamma_{ij}^k)_l\frac{\mathrm{d}u^i}{\mathrm{d}t}\frac{\mathrm{d}u^j}{\mathrm{d}t}\frac{\mathrm{d}u^l}{\mathrm{d}t}\overrightarrow{r}_k
\notag\\&+2(\frac{1}{6}\Delta t^2+\frac{1}{6}\varepsilon_3\Delta t^2)\sum_{i,j,k}\Gamma_{ij}^k\frac{\mathrm{d}^2u^i}{\mathrm{d}t^2}\frac{\mathrm{d}u^j}{\mathrm{d}t}\overrightarrow{r}_k
+(\frac{1}{6}\Delta t^2+\frac{1}{6}\varepsilon_3\Delta t^2)\sum_{i,j,k}\Gamma_{ij}^k\frac{\mathrm{d}u^i}{\mathrm{d}t}\frac{\mathrm{d}^2u^j}{\mathrm{d}t^2}\overrightarrow{r}_k\notag\\
&+(\frac{1}{6}\Delta t^2+\frac{1}{6}\varepsilon_3\Delta t^2)\sum_{i,j,k,l,m}\Gamma_{ij}^m\Gamma_{ml}^k\frac{\mathrm{d}u^i}{\mathrm{d}t}\frac{\mathrm{d}u^j}{\mathrm{d}t}\frac{\mathrm{d}u^l}{\mathrm{d}t}\overrightarrow{r}_{k}
\notag\\&-(\frac{1}{6}\Delta t^2+\frac{1}{6}\varepsilon_3\Delta t^2)\sum_{i,j,k,l,m}L_{ij}L_{lm}g^{mk}\frac{\mathrm{d}u^i}{\mathrm{d}t}\frac{\mathrm{d}u^j}{\mathrm{d}t}
\frac{\mathrm{d}u^l}{\mathrm{d}t}\overrightarrow{r}_{k}\notag\\
&
+(\frac{1}{6}\Delta t^2+\frac{1}{6}\varepsilon_3\Delta t^2)\sum_k\frac{\mathrm{d}^3u^k}{\mathrm{d}t^3}\overrightarrow{r}_k]
\notag\\
&\times |(1+\frac{1}{6}\varepsilon_1\Delta t^2)\sum_k\frac{\mathrm{d}u^k}{\mathrm{d}t}\overrightarrow{r}_k
+(\frac{1}{2}\Delta t+\frac{1}{6}\varepsilon_2\Delta t^2)\sum_{i,j,k}\Gamma_{ij}^k\frac{\mathrm{d}u^i}{\mathrm{d}t}\frac{\mathrm{d}u^j}{\mathrm{d}t}\overrightarrow{r}_k
\notag\\
&+(\frac{1}{2}\Delta t+\frac{1}{6}\varepsilon_2\Delta t^2)\sum_k\frac{\mathrm{d}^2u^k}{\mathrm{d}t^2}\overrightarrow{r}_k
+(\frac{1}{6}\Delta t^2+\frac{1}{6}\varepsilon_3\Delta t^2)\sum_{i,j,k,l}(\Gamma_{ij}^k)_l\frac{\mathrm{d}u^i}{\mathrm{d}t}\frac{\mathrm{d}u^j}{\mathrm{d}t}\frac{\mathrm{d}u^l}{\mathrm{d}t}\overrightarrow{r}_k
\notag\\&+2(\frac{1}{6}\Delta t^2+\frac{1}{6}\varepsilon_3\Delta t^2)\sum_{i,j,k}\Gamma_{ij}^k\frac{\mathrm{d}^2u^i}{\mathrm{d}t^2}\frac{\mathrm{d}u^j}{\mathrm{d}t}\overrightarrow{r}_k
+(\frac{1}{6}\Delta t^2+\frac{1}{6}\varepsilon_3\Delta t^2)\sum_{i,j,k}\Gamma_{ij}^k\frac{\mathrm{d}u^i}{\mathrm{d}t}\frac{\mathrm{d}^2u^j}{\mathrm{d}t^2}\overrightarrow{r}_k\notag\\
&+(\frac{1}{6}\Delta t^2+\frac{1}{6}\varepsilon_3\Delta t^2)\sum_{i,j,k,l,m}\Gamma_{ij}^m\Gamma_{ml}^k\frac{\mathrm{d}u^i}{\mathrm{d}t}\frac{\mathrm{d}u^j}{\mathrm{d}t}\frac{\mathrm{d}u^l}{\mathrm{d}t}\overrightarrow{r}_{k}
\notag\\&-(\frac{1}{6}\Delta t^2+\frac{1}{6}\varepsilon_3\Delta t^2)\sum_{i,j,k,l,m}L_{ij}L_{lm}g^{mk}\frac{\mathrm{d}u^i}{\mathrm{d}t}\frac{\mathrm{d}u^j}{\mathrm{d}t}
\frac{\mathrm{d}u^l}{\mathrm{d}t}\overrightarrow{r}_{k}\notag\\
&
+(\frac{1}{6}\Delta t^2+\frac{1}{6}\varepsilon_3\Delta t^2)\sum_k\frac{\mathrm{d}^3u^k}{\mathrm{d}t^3}\overrightarrow{r}_k|^{-1}
.\notag\end{align}
 One can compute that
\begin{align}&\lim_{\Delta t\rightarrow 0^+}\frac{\overrightarrow{e}_{\overrightarrow{PQ}_{\mathcal{A}}}(\Delta t)-\overrightarrow{e}_{\overrightarrow{PQ}_{\mathcal{A}}}(0)}{\Delta t}\notag\\
=&-\frac{1}{2}|\sum_k\frac{\mathrm{d}u^k}{\mathrm{d}t}\overrightarrow{r}_k|^{-3}\notag\\&\times[ (
\sum_{i,j,k,l,m}\Gamma_{ij}^l\frac{\mathrm{d}u^i}{\mathrm{d}t}\frac{\mathrm{d}u^j}{\mathrm{d}t}\frac{\mathrm{d}u^k}{\mathrm{d}t}\frac{\mathrm{d}u^m}{\mathrm{d}t}g_{kl}\overrightarrow{r}_m
+\sum_{k,l,m}\frac{\mathrm{d}^2u^l}{\mathrm{d}t^2}\frac{\mathrm{d}u^k}{\mathrm{d}t}\frac{\mathrm{d}u^m}{\mathrm{d}t}g_{kl}\overrightarrow{r}_m) \notag\\& -(
\sum_{i,j,k,l,m}\Gamma_{ij}^m\frac{\mathrm{d}u^i}{\mathrm{d}t}\frac{\mathrm{d}u^j}{\mathrm{d}t}\frac{\mathrm{d}u^k}{\mathrm{d}t}\frac{\mathrm{d}u^l}{\mathrm{d}t}g_{kl}\overrightarrow{r}_m
+\sum_{k,l,m}\frac{\mathrm{d}^2u^m}{\mathrm{d}t^2}\frac{\mathrm{d}u^k}{\mathrm{d}t}\frac{\mathrm{d}u^l}{\mathrm{d}t}g_{kl}\overrightarrow{r}_m
)]
.\label{xxsecsur15} \end{align}
For brevity, we place the detailed calculation of (\ref{xxsecsur15}) in Appendix A. Now we have
 \begin{align}&|\lim_{\Delta t\rightarrow 0^+}\frac{\overrightarrow{e}_{\overrightarrow{PQ}_{\mathcal{A}}}(\Delta t)-\overrightarrow{e}_{\overrightarrow{PQ}_{\mathcal{A}}}(0)}{\Delta t}|\notag\\
=&\frac{1}{2}|\sum_k\frac{\mathrm{d}u^k}{\mathrm{d}t}\overrightarrow{r}_k|^{-3}
|\sum_{m}(\sum_{i,j,k,l}\Gamma_{ij}^l\frac{\mathrm{d}u^i}{\mathrm{d}t}\frac{\mathrm{d}u^j}{\mathrm{d}t}\frac{\mathrm{d}u^k}{\mathrm{d}t}\frac{\mathrm{d}u^m}{\mathrm{d}t}
g_{kl}
+\sum_{k,l}\frac{\mathrm{d}^2u^l}{\mathrm{d}t^2}\frac{\mathrm{d}u^k}{\mathrm{d}t}\frac{\mathrm{d}u^m}{\mathrm{d}t}g_{kl}\notag\\& -
\sum_{i,j,k,l}\Gamma_{ij}^m\frac{\mathrm{d}u^i}{\mathrm{d}t}\frac{\mathrm{d}u^j}{\mathrm{d}t}\frac{\mathrm{d}u^k}{\mathrm{d}t}\frac{\mathrm{d}u^l}{\mathrm{d}t}g_{kl}
-\sum_{k,l}\frac{\mathrm{d}^2u^m}{\mathrm{d}t^2}\frac{\mathrm{d}u^k}{\mathrm{d}t}\frac{\mathrm{d}u^l}{\mathrm{d}t}g_{kl})  \overrightarrow{r}_m|\notag\\
=&\frac{1}{2}|\sum_k\frac{\mathrm{d}u^k}{\mathrm{d}t}\overrightarrow{r}_k|^{-3}\notag\\
&\times\{[\sum_{p}(\sum_{i,j,k,l}\Gamma_{ij}^l\frac{\mathrm{d}u^i}{\mathrm{d}t}\frac{\mathrm{d}u^j}{\mathrm{d}t}\frac{\mathrm{d}u^k}{\mathrm{d}t}\frac{\mathrm{d}u^p}{\mathrm{d}t}g_{kl}
+\sum_{k,l}\frac{\mathrm{d}^2u^l}{\mathrm{d}t^2}\frac{\mathrm{d}u^k}{\mathrm{d}t}\frac{\mathrm{d}u^p}{\mathrm{d}t}g_{kl}\notag\\& -
\sum_{i,j,k,l}\Gamma_{ij}^p\frac{\mathrm{d}u^i}{\mathrm{d}t}\frac{\mathrm{d}u^j}{\mathrm{d}t}\frac{\mathrm{d}u^k}{\mathrm{d}t}\frac{\mathrm{d}u^l}{\mathrm{d}t}g_{kl}
-\sum_{k,l}\frac{\mathrm{d}^2u^p}{\mathrm{d}t^2}\frac{\mathrm{d}u^k}{\mathrm{d}t}\frac{\mathrm{d}u^l}{\mathrm{d}t}g_{kl}) \overrightarrow{r}_p]\notag\\
&\cdot[\sum_{q}(\sum_{i,j,k,l}\Gamma_{ij}^l\frac{\mathrm{d}u^i}{\mathrm{d}t}\frac{\mathrm{d}u^j}{\mathrm{d}t}\frac{\mathrm{d}u^k}{\mathrm{d}t}\frac{\mathrm{d}u^q}{\mathrm{d}t}g_{kl}
+\sum_{k,l}\frac{\mathrm{d}^2u^l}{\mathrm{d}t^2}\frac{\mathrm{d}u^k}{\mathrm{d}t}\frac{\mathrm{d}u^q}{\mathrm{d}t}g_{kl}\notag\\& -
\sum_{i,j,k,l}\Gamma_{ij}^q\frac{\mathrm{d}u^i}{\mathrm{d}t}\frac{\mathrm{d}u^j}{\mathrm{d}t}\frac{\mathrm{d}u^k}{\mathrm{d}t}\frac{\mathrm{d}u^l}{\mathrm{d}t}g_{kl}
-\sum_{k,l}\frac{\mathrm{d}^2u^q}{\mathrm{d}t^2}\frac{\mathrm{d}u^k}{\mathrm{d}t}\frac{\mathrm{d}u^l}{\mathrm{d}t}g_{kl}) \overrightarrow{r}_q]\}^{\frac{1}{2}}\notag\\
=&\frac{1}{2}|\sum_k\frac{\mathrm{d}u^k}{\mathrm{d}t}\overrightarrow{r}_k|^{-3}\notag\\&\times
[\sum_{p,q}(\sum_{i,j,k,l}\Gamma_{ij}^l\frac{\mathrm{d}u^i}{\mathrm{d}t}\frac{\mathrm{d}u^j}{\mathrm{d}t}\frac{\mathrm{d}u^k}{\mathrm{d}t}\frac{\mathrm{d}u^p}{\mathrm{d}t}g_{kl}
+\sum_{k,l}\frac{\mathrm{d}^2u^l}{\mathrm{d}t^2}\frac{\mathrm{d}u^k}{\mathrm{d}t}\frac{\mathrm{d}u^p}{\mathrm{d}t}g_{kl} \notag\\& -
\sum_{i,j,k,l}\Gamma_{ij}^p\frac{\mathrm{d}u^i}{\mathrm{d}t}\frac{\mathrm{d}u^j}{\mathrm{d}t}\frac{\mathrm{d}u^k}{\mathrm{d}t}\frac{\mathrm{d}u^l}{\mathrm{d}t}g_{kl}
-\sum_{k,l}\frac{\mathrm{d}^2u^p}{\mathrm{d}t^2}\frac{\mathrm{d}u^k}{\mathrm{d}t}\frac{\mathrm{d}u^l}{\mathrm{d}t}g_{kl}) \notag\\
&\times(\sum_{i,j,k,l}\Gamma_{ij}^l\frac{\mathrm{d}u^i}{\mathrm{d}t}\frac{\mathrm{d}u^j}{\mathrm{d}t}\frac{\mathrm{d}u^k}{\mathrm{d}t}\frac{\mathrm{d}u^q}{\mathrm{d}t}g_{kl}
+\sum_{k,l}\frac{\mathrm{d}^2u^l}{\mathrm{d}t^2}\frac{\mathrm{d}u^k}{\mathrm{d}t}\frac{\mathrm{d}u^q}{\mathrm{d}t}g_{kl}\notag\\& -
\sum_{i,j,k,l}\Gamma_{ij}^q\frac{\mathrm{d}u^i}{\mathrm{d}t}\frac{\mathrm{d}u^j}{\mathrm{d}t}\frac{\mathrm{d}u^k}{\mathrm{d}t}\frac{\mathrm{d}u^l}{\mathrm{d}t}g_{kl}
-\sum_{k,l}\frac{\mathrm{d}^2u^q}{\mathrm{d}t^2}\frac{\mathrm{d}u^k}{\mathrm{d}t}\frac{\mathrm{d}u^l}{\mathrm{d}t}g_{kl}) g_{pq}]^{-\frac{1}{2}}\notag\\
=&\frac{1}{2}|\sum_k\frac{\mathrm{d}u^k}{\mathrm{d}t}\overrightarrow{r}_k|^{-3}\notag\\&\times
[\sum_{p,q}\sum_{k,l}(\sum_{i,j}\Gamma_{ij}^l\frac{\mathrm{d}u^i}{\mathrm{d}t}\frac{\mathrm{d}u^j}{\mathrm{d}t}\frac{\mathrm{d}u^k}{\mathrm{d}t}\frac{\mathrm{d}u^p}{\mathrm{d}t}
+\frac{\mathrm{d}^2u^l}{\mathrm{d}t^2}\frac{\mathrm{d}u^k}{\mathrm{d}t}\frac{\mathrm{d}u^p}{\mathrm{d}t} \notag\\& -
\sum_{i,j}\Gamma_{ij}^p\frac{\mathrm{d}u^i}{\mathrm{d}t}\frac{\mathrm{d}u^j}{\mathrm{d}t}\frac{\mathrm{d}u^k}{\mathrm{d}t}\frac{\mathrm{d}u^l}{\mathrm{d}t}
-\frac{\mathrm{d}^2u^p}{\mathrm{d}t^2}\frac{\mathrm{d}u^k}{\mathrm{d}t}\frac{\mathrm{d}u^l}{\mathrm{d}t})g_{kl} \notag\\
&\times\sum_{k,l}(\sum_{i,j}\Gamma_{ij}^l\frac{\mathrm{d}u^i}{\mathrm{d}t}\frac{\mathrm{d}u^j}{\mathrm{d}t}\frac{\mathrm{d}u^k}{\mathrm{d}t}\frac{\mathrm{d}u^q}{\mathrm{d}t}
+\frac{\mathrm{d}^2u^l}{\mathrm{d}t^2}\frac{\mathrm{d}u^k}{\mathrm{d}t}\frac{\mathrm{d}u^q}{\mathrm{d}t}\notag\\& -
\sum_{i,j}\Gamma_{ij}^q\frac{\mathrm{d}u^i}{\mathrm{d}t}\frac{\mathrm{d}u^j}{\mathrm{d}t}\frac{\mathrm{d}u^k}{\mathrm{d}t}\frac{\mathrm{d}u^l}{\mathrm{d}t}
-\frac{\mathrm{d}^2u^q}{\mathrm{d}t^2}\frac{\mathrm{d}u^k}{\mathrm{d}t}\frac{\mathrm{d}u^l}{\mathrm{d}t})g_{kl} g_{pq}]^{-\frac{1}{2}}
.\notag
\end{align}
2) and 3)  The proof for $\Delta t>0$ is similar to that of 1).  So we only consider the case where $\Delta t\rightarrow 0^+$. Below is a simple process:
\begin{align}\overrightarrow{e}_{\overrightarrow{PQ}_{\mathcal{B}}}& =[(1+\frac{1}{6}\varepsilon_1\Delta t^2)\frac{\mathrm{d}u^1}{\mathrm{d}t}\overrightarrow{r}_1
+(\frac{1}{2}\Delta t+\frac{1}{6}\varepsilon_2\Delta t^2)\sum_{i,j}\Gamma_{ij}^1\frac{\mathrm{d}u^i}{\mathrm{d}t}\frac{\mathrm{d}u^j}{\mathrm{d}t}\overrightarrow{r}_1
\notag\\
&+(\frac{1}{2}\Delta t+\frac{1}{6}\varepsilon_2\Delta t^2)\frac{\mathrm{d}^2u^1}{\mathrm{d}t^2}\overrightarrow{r}_1
+(\frac{1}{6}\Delta t^2+\frac{1}{6}\varepsilon_3\Delta t^2)\sum_{i,j,l}(\Gamma_{ij}^1)_l\frac{\mathrm{d}u^i}{\mathrm{d}t}\frac{\mathrm{d}u^j}{\mathrm{d}t}\frac{\mathrm{d}u^l}{\mathrm{d}t}\overrightarrow{r}_1
\notag\\&+2(\frac{1}{6}\Delta t^2+\frac{1}{6}\varepsilon_3\Delta t^2)\sum_{i,j}\Gamma_{ij}^1\frac{\mathrm{d}^2u^i}{\mathrm{d}t^2}\frac{\mathrm{d}u^j}{\mathrm{d}t}\overrightarrow{r}_1
+(\frac{1}{6}\Delta t^2+\frac{1}{6}\varepsilon_3\Delta t^2)\sum_{i,j}\Gamma_{ij}^1\frac{\mathrm{d}u^i}{\mathrm{d}t}\frac{\mathrm{d}^2u^j}{\mathrm{d}t^2}\overrightarrow{r}_1\notag\\
&+(\frac{1}{6}\Delta t^2+\frac{1}{6}\varepsilon_3\Delta t^2)\sum_{i,j,l,m}\Gamma_{ij}^m\Gamma_{ml}^1\frac{\mathrm{d}u^i}{\mathrm{d}t}\frac{\mathrm{d}u^j}{\mathrm{d}t}\frac{\mathrm{d}u^l}{\mathrm{d}t}\overrightarrow{r}_{1}
\notag\\&-(\frac{1}{6}\Delta t^2+\frac{1}{6}\varepsilon_3\Delta t^2)\sum_{i,j,l,m}L_{ij}L_{lm}g^{m1}\frac{\mathrm{d}u^i}{\mathrm{d}t}\frac{\mathrm{d}u^j}{\mathrm{d}t}
\frac{\mathrm{d}u^l}{\mathrm{d}t}\overrightarrow{r}_{1}\notag\\
&
+(\frac{1}{6}\Delta t^2+\frac{1}{6}\varepsilon_3\Delta t^2)\frac{\mathrm{d}^3u^1}{\mathrm{d}t^3}\overrightarrow{r}_1
+(\frac{1}{2}\Delta t+\frac{1}{6}\varepsilon_2\Delta t^2)\sum_{i,j}L_{ij}\frac{\mathrm{d}u^i}{\mathrm{d}t}\frac{\mathrm{d}u^j}{\mathrm{d}t}\overrightarrow{n}\notag\\
&+(\frac{1}{6}\Delta t^2+\frac{1}{6}\varepsilon_3\Delta t^2)\sum_{i,j,k,l}\Gamma_{ij}^kL_{kl}\frac{\mathrm{d}u^i}{\mathrm{d}t}\frac{\mathrm{d}u^j}{\mathrm{d}t}\frac{\mathrm{d}u^l}{\mathrm{d}t}\overrightarrow{n}
\notag\\&+(\frac{1}{6}\Delta t^2+\frac{1}{6}\varepsilon_3\Delta t^2)\sum_{i,j,k}(L_{ij})_k\frac{\mathrm{d}u^i}{\mathrm{d}t}\frac{\mathrm{d}u^j}{\mathrm{d}t}\frac{\mathrm{d}u^k}{\mathrm{d}t}\overrightarrow{n}\notag\\
&+2(\frac{1}{6}\Delta t^2+\frac{1}{6}\varepsilon_3\Delta t^2)\sum_{i,j}L_{ij}\frac{\mathrm{d}^2u^i}{\mathrm{d}t^2}\frac{\mathrm{d}u^j}{\mathrm{d}t}\overrightarrow{n}
+(\frac{1}{6}\Delta t^2+\frac{1}{6}\varepsilon_3\Delta t^2)\sum_{i,j}L_{ij}\frac{\mathrm{d}u^i}{\mathrm{d}t}\frac{\mathrm{d}^2u^j}{\mathrm{d}t^2}\overrightarrow{n}]\notag\\
&\times
|(1+\frac{1}{6}\varepsilon_1\Delta t^2)\frac{\mathrm{d}u^1}{\mathrm{d}t}\overrightarrow{r}_1
+(\frac{1}{2}\Delta t+\frac{1}{6}\varepsilon_2\Delta t^2)\sum_{i,j}\Gamma_{ij}^1\frac{\mathrm{d}u^i}{\mathrm{d}t}\frac{\mathrm{d}u^j}{\mathrm{d}t}\overrightarrow{r}_1
\notag\\
&+(\frac{1}{2}\Delta t+\frac{1}{6}\varepsilon_2\Delta t^2)\frac{\mathrm{d}^2u^1}{\mathrm{d}t^2}\overrightarrow{r}_1
+(\frac{1}{6}\Delta t^2+\frac{1}{6}\varepsilon_3\Delta t^2)\sum_{i,j,l}(\Gamma_{ij}^1)_l\frac{\mathrm{d}u^i}{\mathrm{d}t}\frac{\mathrm{d}u^j}{\mathrm{d}t}\frac{\mathrm{d}u^l}{\mathrm{d}t}\overrightarrow{r}_1
\notag\\&+2(\frac{1}{6}\Delta t^2+\frac{1}{6}\varepsilon_3\Delta t^2)\sum_{i,j}\Gamma_{ij}^1\frac{\mathrm{d}^2u^i}{\mathrm{d}t^2}\frac{\mathrm{d}u^j}{\mathrm{d}t}\overrightarrow{r}_1
+(\frac{1}{6}\Delta t^2+\frac{1}{6}\varepsilon_3\Delta t^2)\sum_{i,j}\Gamma_{ij}^1\frac{\mathrm{d}u^i}{\mathrm{d}t}\frac{\mathrm{d}^2u^j}{\mathrm{d}t^2}\overrightarrow{r}_1\notag\\
&+(\frac{1}{6}\Delta t^2+\frac{1}{6}\varepsilon_3\Delta t^2)\sum_{i,j,l,m}\Gamma_{ij}^m\Gamma_{ml}^1\frac{\mathrm{d}u^i}{\mathrm{d}t}\frac{\mathrm{d}u^j}{\mathrm{d}t}\frac{\mathrm{d}u^l}{\mathrm{d}t}\overrightarrow{r}_{1}
\notag\\&-(\frac{1}{6}\Delta t^2+\frac{1}{6}\varepsilon_3\Delta t^2)\sum_{i,j,l,m}L_{ij}L_{lm}g^{m1}\frac{\mathrm{d}u^i}{\mathrm{d}t}\frac{\mathrm{d}u^j}{\mathrm{d}t}
\frac{\mathrm{d}u^l}{\mathrm{d}t}\overrightarrow{r}_{1}\notag\\
&
+(\frac{1}{6}\Delta t^2+\frac{1}{6}\varepsilon_3\Delta t^2)\frac{\mathrm{d}^3u^1}{\mathrm{d}t^3}\overrightarrow{r}_1
+(\frac{1}{2}\Delta t+\frac{1}{6}\varepsilon_2\Delta t^2)\sum_{i,j}L_{ij}\frac{\mathrm{d}u^i}{\mathrm{d}t}\frac{\mathrm{d}u^j}{\mathrm{d}t}\overrightarrow{n}\notag\\
&+(\frac{1}{6}\Delta t^2+\frac{1}{6}\varepsilon_3\Delta t^2)\sum_{i,j,k,l}\Gamma_{ij}^kL_{kl}\frac{\mathrm{d}u^i}{\mathrm{d}t}\frac{\mathrm{d}u^j}{\mathrm{d}t}\frac{\mathrm{d}u^l}{\mathrm{d}t}\overrightarrow{n}
\notag\\&+(\frac{1}{6}\Delta t^2+\frac{1}{6}\varepsilon_3\Delta t^2)\sum_{i,j,k}(L_{ij})_k\frac{\mathrm{d}u^i}{\mathrm{d}t}\frac{\mathrm{d}u^j}{\mathrm{d}t}\frac{\mathrm{d}u^k}{\mathrm{d}t}\overrightarrow{n}\notag\\
&+2(\frac{1}{6}\Delta t^2+\frac{1}{6}\varepsilon_3\Delta t^2)\sum_{i,j}L_{ij}\frac{\mathrm{d}^2u^i}{\mathrm{d}t^2}\frac{\mathrm{d}u^j}{\mathrm{d}t}\overrightarrow{n}
+(\frac{1}{6}\Delta t^2+\frac{1}{6}\varepsilon_3\Delta t^2)\sum_{i,j}L_{ij}\frac{\mathrm{d}u^i}{\mathrm{d}t}\frac{\mathrm{d}^2u^j}{\mathrm{d}t^2}\overrightarrow{n}|^{-1},\notag\end{align}
\begin{align}&\lim_{\Delta t\rightarrow 0^+}\frac{\overrightarrow{e}_{\overrightarrow{PQ}_{\mathcal{B}}}(\Delta t)-\overrightarrow{e}_{\overrightarrow{PQ}_{\mathcal{B}}}(0)}{\Delta t}
\notag\\=&\frac{1}{2}|\frac{\mathrm{d}u^1}{\mathrm{d}t}\overrightarrow{r}_1|^{-1}
  ( \sum_{i,j}L_{ij}\frac{\mathrm{d}u^i}{\mathrm{d}t}\frac{\mathrm{d}u^j}{\mathrm{d}t}\overrightarrow{n})\ (\text{see Appendix A})
,\label{xxsecsur16} \end{align}
\begin{align}&|\lim_{\Delta t\rightarrow 0^+}\frac{\overrightarrow{e}_{\overrightarrow{PQ}_{\mathcal{B}}}(\Delta t)-\overrightarrow{e}_{\overrightarrow{PQ}_{\mathcal{B}}}(0)}{\Delta t}
|=\frac{1}{2}|\frac{\mathrm{d}u^1}{\mathrm{d}t}\overrightarrow{r}_1|^{-1}
  | \sum_{i,j}L_{ij}\frac{\mathrm{d}u^i}{\mathrm{d}t}\frac{\mathrm{d}u^j}{\mathrm{d}t}|
;\notag \end{align}
\begin{align}\overrightarrow{e}_{\overrightarrow{PQ}_{\mathcal{C}}}& = [(1+\frac{1}{6}\varepsilon_1\Delta t^2)\frac{\mathrm{d}u^2}{\mathrm{d}t}\overrightarrow{r}_2
+(\frac{1}{2}\Delta t+\frac{1}{6}\varepsilon_2\Delta t^2)\sum_{i,j}\Gamma_{ij}^2\frac{\mathrm{d}u^i}{\mathrm{d}t}\frac{\mathrm{d}u^j}{\mathrm{d}t}\overrightarrow{r}_2
\notag\\
&+(\frac{1}{2}\Delta t+\frac{1}{6}\varepsilon_2\Delta t^2)\frac{\mathrm{d}^2u^2}{\mathrm{d}t^2}\overrightarrow{r}_2
+(\frac{1}{6}\Delta t^2+\frac{1}{6}\varepsilon_3\Delta t^2)\sum_{i,j,l}(\Gamma_{ij}^2)_l\frac{\mathrm{d}u^i}{\mathrm{d}t}\frac{\mathrm{d}u^j}{\mathrm{d}t}\frac{\mathrm{d}u^l}{\mathrm{d}t}\overrightarrow{r}_2
\notag\\&+2(\frac{1}{6}\Delta t^2+\frac{1}{6}\varepsilon_3\Delta t^2)\sum_{i,j}\Gamma_{ij}^2\frac{\mathrm{d}^2u^i}{\mathrm{d}t^2}\frac{\mathrm{d}u^j}{\mathrm{d}t}\overrightarrow{r}_2
+(\frac{1}{6}\Delta t^2+\frac{1}{6}\varepsilon_3\Delta t^2)\sum_{i,j}\Gamma_{ij}^2\frac{\mathrm{d}u^i}{\mathrm{d}t}\frac{\mathrm{d}^2u^j}{\mathrm{d}t^2}\overrightarrow{r}_2\notag\\
&+(\frac{1}{6}\Delta t^2+\frac{1}{6}\varepsilon_3\Delta t^2)\sum_{i,j,l,m}\Gamma_{ij}^m\Gamma_{ml}^2\frac{\mathrm{d}u^i}{\mathrm{d}t}\frac{\mathrm{d}u^j}{\mathrm{d}t}\frac{\mathrm{d}u^l}{\mathrm{d}t}\overrightarrow{r}_{2}
\notag\\&-(\frac{1}{6}\Delta t^2+\frac{1}{6}\varepsilon_3\Delta t^2)\sum_{i,j,l,m}L_{ij}L_{lm}g^{m2}\frac{\mathrm{d}u^i}{\mathrm{d}t}\frac{\mathrm{d}u^j}{\mathrm{d}t}
\frac{\mathrm{d}u^l}{\mathrm{d}t}\overrightarrow{r}_{2}\notag\\
&
+(\frac{1}{6}\Delta t^2+\frac{1}{6}\varepsilon_3\Delta t^2)\frac{\mathrm{d}^3u^2}{\mathrm{d}t^3}\overrightarrow{r}_2
+(\frac{1}{2}\Delta t+\frac{1}{6}\varepsilon_2\Delta t^2)\sum_{i,j}L_{ij}\frac{\mathrm{d}u^i}{\mathrm{d}t}\frac{\mathrm{d}u^j}{\mathrm{d}t}\overrightarrow{n}\notag\\
&+(\frac{1}{6}\Delta t^2+\frac{1}{6}\varepsilon_3\Delta t^2)\sum_{i,j,k,l}\Gamma_{ij}^kL_{kl}\frac{\mathrm{d}u^i}{\mathrm{d}t}\frac{\mathrm{d}u^j}{\mathrm{d}t}\frac{\mathrm{d}u^l}{\mathrm{d}t}\overrightarrow{n}
\notag\\&+(\frac{1}{6}\Delta t^2+\frac{1}{6}\varepsilon_3\Delta t^2)\sum_{i,j,k}(L_{ij})_k\frac{\mathrm{d}u^i}{\mathrm{d}t}\frac{\mathrm{d}u^j}{\mathrm{d}t}\frac{\mathrm{d}u^k}{\mathrm{d}t}\overrightarrow{n}\notag\\
&+2(\frac{1}{6}\Delta t^2+\frac{1}{6}\varepsilon_3\Delta t^2)\sum_{i,j}L_{ij}\frac{\mathrm{d}^2u^i}{\mathrm{d}t^2}\frac{\mathrm{d}u^j}{\mathrm{d}t}\overrightarrow{n}
+(\frac{1}{6}\Delta t^2+\frac{1}{6}\varepsilon_3\Delta t^2)\sum_{i,j}L_{ij}\frac{\mathrm{d}u^i}{\mathrm{d}t}\frac{\mathrm{d}^2u^j}{\mathrm{d}t^2}\overrightarrow{n}] \notag\\
&\times|(1+\frac{1}{6}\varepsilon_1\Delta t^2)\frac{\mathrm{d}u^2}{\mathrm{d}t}\overrightarrow{r}_2
+(\frac{1}{2}\Delta t+\frac{1}{6}\varepsilon_2\Delta t^2)\sum_{i,j}\Gamma_{ij}^2\frac{\mathrm{d}u^i}{\mathrm{d}t}\frac{\mathrm{d}u^j}{\mathrm{d}t}\overrightarrow{r}_2
\notag\\
&+(\frac{1}{2}\Delta t+\frac{1}{6}\varepsilon_2\Delta t^2)\frac{\mathrm{d}^2u^2}{\mathrm{d}t^2}\overrightarrow{r}_2
+(\frac{1}{6}\Delta t^2+\frac{1}{6}\varepsilon_3\Delta t^2)\sum_{i,j,l}(\Gamma_{ij}^2)_l\frac{\mathrm{d}u^i}{\mathrm{d}t}\frac{\mathrm{d}u^j}{\mathrm{d}t}\frac{\mathrm{d}u^l}{\mathrm{d}t}\overrightarrow{r}_2
\notag\\&+2(\frac{1}{6}\Delta t^2+\frac{1}{6}\varepsilon_3\Delta t^2)\sum_{i,j}\Gamma_{ij}^2\frac{\mathrm{d}^2u^i}{\mathrm{d}t^2}\frac{\mathrm{d}u^j}{\mathrm{d}t}\overrightarrow{r}_2
+(\frac{1}{6}\Delta t^2+\frac{1}{6}\varepsilon_3\Delta t^2)\sum_{i,j}\Gamma_{ij}^2\frac{\mathrm{d}u^i}{\mathrm{d}t}\frac{\mathrm{d}^2u^j}{\mathrm{d}t^2}\overrightarrow{r}_2\notag\\
&+(\frac{1}{6}\Delta t^2+\frac{1}{6}\varepsilon_3\Delta t^2)\sum_{i,j,l,m}\Gamma_{ij}^m\Gamma_{ml}^2\frac{\mathrm{d}u^i}{\mathrm{d}t}\frac{\mathrm{d}u^j}{\mathrm{d}t}\frac{\mathrm{d}u^l}{\mathrm{d}t}\overrightarrow{r}_{2}
\notag\\&-(\frac{1}{6}\Delta t^2+\frac{1}{6}\varepsilon_3\Delta t^2)\sum_{i,j,l,m}L_{ij}L_{lm}g^{m2}\frac{\mathrm{d}u^i}{\mathrm{d}t}\frac{\mathrm{d}u^j}{\mathrm{d}t}
\frac{\mathrm{d}u^l}{\mathrm{d}t}\overrightarrow{r}_{2}\notag\\
&
+(\frac{1}{6}\Delta t^2+\frac{1}{6}\varepsilon_3\Delta t^2)\frac{\mathrm{d}^3u^2}{\mathrm{d}t^3}\overrightarrow{r}_2
+(\frac{1}{2}\Delta t+\frac{1}{6}\varepsilon_2\Delta t^2)\sum_{i,j}L_{ij}\frac{\mathrm{d}u^i}{\mathrm{d}t}\frac{\mathrm{d}u^j}{\mathrm{d}t}\overrightarrow{n}\notag\\
&+(\frac{1}{6}\Delta t^2+\frac{1}{6}\varepsilon_3\Delta t^2)\sum_{i,j,k,l}\Gamma_{ij}^kL_{kl}\frac{\mathrm{d}u^i}{\mathrm{d}t}\frac{\mathrm{d}u^j}{\mathrm{d}t}\frac{\mathrm{d}u^l}{\mathrm{d}t}\overrightarrow{n}
\notag\\&+(\frac{1}{6}\Delta t^2+\frac{1}{6}\varepsilon_3\Delta t^2)\sum_{i,j,k}(L_{ij})_k\frac{\mathrm{d}u^i}{\mathrm{d}t}\frac{\mathrm{d}u^j}{\mathrm{d}t}\frac{\mathrm{d}u^k}{\mathrm{d}t}\overrightarrow{n}\notag\\
&+2(\frac{1}{6}\Delta t^2+\frac{1}{6}\varepsilon_3\Delta t^2)\sum_{i,j}L_{ij}\frac{\mathrm{d}^2u^i}{\mathrm{d}t^2}\frac{\mathrm{d}u^j}{\mathrm{d}t}\overrightarrow{n}
+(\frac{1}{6}\Delta t^2+\frac{1}{6}\varepsilon_3\Delta t^2)\sum_{i,j}L_{ij}\frac{\mathrm{d}u^i}{\mathrm{d}t}\frac{\mathrm{d}^2u^j}{\mathrm{d}t^2}\overrightarrow{n}|^{-1} ,\notag\end{align}
\begin{align}&\lim_{\Delta t\rightarrow 0^+}\frac{\overrightarrow{e}_{\overrightarrow{PQ}_{\mathcal{C}}}(\Delta t)-\overrightarrow{e}_{\overrightarrow{PQ}_{\mathcal{C}}}(0)}{\Delta t}\notag\\
 =&\frac{1}{2}|\frac{\mathrm{d}u^2}{\mathrm{d}t}\overrightarrow{r}_2|^{-1}
(\sum_{i,j}L_{ij}\frac{\mathrm{d}u^i}{\mathrm{d}t}\frac{\mathrm{d}u^j}{\mathrm{d}t}\overrightarrow{n} )\ (\text{see Appendix A})
,\label{xxsecsur17} \end{align}
\begin{align}|\lim_{\Delta t\rightarrow 0^+}\frac{\overrightarrow{e}_{\overrightarrow{PQ}_{\mathcal{C}}}(\Delta t)-\overrightarrow{e}_{\overrightarrow{PQ}_{\mathcal{C}}}(0)}{\Delta t} |
 =\frac{1}{2}|\frac{\mathrm{d}u^2}{\mathrm{d}t}\overrightarrow{r}_2|^{-1}
|\sum_{i,j}L_{ij}\frac{\mathrm{d}u^i}{\mathrm{d}t}\frac{\mathrm{d}u^j}{\mathrm{d}t}|
.\notag\end{align}

\end{proof}

\section{Application: The behaviour of moving points on ellipses}
In this section, we will use the method of rotating frames to discuss the  behaviour of moving points on ellipses in detail.
We mainly use frame (\ref{xxsecpla2}) and  (\ref{xxsecpla9}). Let us start with (\ref{xxsecpla2}).

\subsection{Ellipses with general rotating frames 1}

Let the vector function of an ellipse be given by
\begin{equation}\label{xxsecapl1}
  \vec{r}(\theta)=\{a\cos \theta, b\sin \theta \}, \ \ \ 0\leq\theta\leq2\pi,
\end{equation}
where $a>b>0$. Frame (\ref{xxsecpla2}) now becomes
\begin{equation}\label{xxsecapl2}
 \{O; \vec{e}_1(\theta), \vec{e}_2(\theta) \}, \ \ \ 0\leq\theta\leq2\pi,
\end{equation}
where $\vec{e}_1(\theta)=\frac{\{a\cos\theta, b\sin\theta\}}{\sqrt{a^2\cos^2\theta+b^2\sin^2\theta}}$, $\vec{e}_2(\theta)=\frac{\{ -b\sin\theta,a\cos\theta\}}{\sqrt{a^2\cos^2\theta+b^2\sin^2\theta}}$.  Let the coordinates of (\ref{xxsecapl2}) be $\xi,\eta$.
Then the equation of (\ref{xxsecapl1}) under (\ref{xxsecapl2}) is
\begin{equation}\label{xxsecapl3}
  \xi(\theta)= \sqrt{a^2\cos^2\theta+b^2\sin^2\theta }, \  \eta(\theta)=0,\ \ \ 0\leq\theta\leq2\pi.
\end{equation}

\textbf{Note:} Unless otherwise specified, the range of $\theta$ is $[0,2\pi]$, though it may also be interpreted as $(-\infty, +\infty)$.

It is precisely due to the establishment of (\ref{xxsecapl2}), we can observe two kinds of underlying motions in the behaviour of an arbitrary moving point on (\ref{xxsecapl1}), namely vibration and rotation. The details are as follows.
\begin{proposition}\label{xxsec5.1} Let $P$ be a moving point on (\ref{xxsecapl1}), and let its parameter be $\theta$.
 \begin{enumerate}
  \item[$\mathrm{(i)}$] For $\mathcal{D}=|\overrightarrow{OP}|$, we have
  \begin{align}
\frac{ \rm{d}^2\mathcal{D}}{\rm{d}\theta^2}
   =&\frac{ c^2(-a^2\cos^4\theta +b^2\sin^4\theta)}{(a^2\cos^2\theta+b^2\sin^2\theta)^{\frac{3}{2}}} , \notag\\
  \frac{ \rm{d}\mathcal{D}}{\rm{d}\theta}|_{\theta=0}&=0,
   \mathcal{D}|_{\theta=0}=a.\notag
\end{align}
\item[$\mathrm{(ii)}$] The rotational speed of $\overrightarrow{OP}$ with respect to $\theta$
 is $$\frac{ab}{a^2\cos^2\theta+b^2\sin^2\theta}.$$
\end{enumerate}
\end{proposition}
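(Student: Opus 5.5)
The plan is to specialize the general formulas of \S 2.1 to $x(\theta)=a\cos\theta$, $y(\theta)=b\sin\theta$, reading $c^2=a^2-b^2$ as the linear eccentricity squared. We will need
\[
x'=-a\sin\theta,\quad y'=b\cos\theta,\quad x''=-a\cos\theta,\quad y''=-b\sin\theta,
\]
and the abbreviation $R=a^2\cos^2\theta+b^2\sin^2\theta=\mathcal{D}^2$, which is positive because $a>b>0$.

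For (ii) we apply Proposition \ref{xxsec2.1}(ii) directly. The key quantity is
\[
xy'-x'y=ab\cos^2\theta+ab\sin^2\theta=ab>0,
\]
so the rotational speed is $ab/R$, which is the claimed value. Formula (\ref{xxsecpla4}) also gives the signed direction $ab\{-b\sin\theta,a\cos\theta\}/R^{3/2}$, should that be needed later.

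For (i), the initial values are immediate. We have $\mathcal{D}(0)=\sqrt{a^2}=a$. Proposition \ref{xxsec2.1}(i) gives
\[
\frac{\mathrm{d}\mathcal{D}}{\mathrm{d}\theta}=\frac{xx'+yy'}{\sqrt R}=\frac{-(a^2-b^2)\sin\theta\cos\theta}{\sqrt R}=-\frac{c^2\sin 2\theta}{2\sqrt R},
\]
and this vanishes at $\theta=0$.

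For the second derivative, we substitute into (\ref{xxsecpla5}), written as
\[
\frac{\mathrm{d}^2\mathcal{D}}{\mathrm{d}\theta^2}=\frac{-(xx'+yy')^2+R\,(x'^2+y'^2+xx''+yy'')}{R^{3/2}}.
\]
The pieces are as follows:
\[
x'^2+y'^2+xx''+yy''=a^2\sin^2\theta+b^2\cos^2\theta-a^2\cos^2\theta-b^2\sin^2\theta=-c^2\cos2\theta,
\]
\[
(xx'+yy')^2=c^4\sin^2\theta\cos^2\theta .
\]
The numerator is therefore $-c^2\bigl[c^2\sin^2\theta\cos^2\theta+R(\cos^2\theta-\sin^2\theta)\bigr]$.

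The only real step, and the main obstacle since it is an algebraic simplification rather than a conceptual one, is showing that this bracket equals $a^2\cos^4\theta-b^2\sin^4\theta$. We expand
\[
R(\cos^2\theta-\sin^2\theta)=a^2\cos^4\theta-a^2\sin^2\theta\cos^2\theta+b^2\sin^2\theta\cos^2\theta-b^2\sin^4\theta .
\]
Its mixed terms sum to $-c^2\sin^2\theta\cos^2\theta$, which cancels exactly with the first term of the bracket. What remains is $a^2\cos^4\theta-b^2\sin^4\theta$, so the numerator becomes $c^2(-a^2\cos^4\theta+b^2\sin^4\theta)$. Dividing by $R^{3/2}$ gives the stated formula.
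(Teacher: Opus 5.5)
Your proof is correct and takes essentially the same approach as the paper. The paper differentiates $\mathcal{D}=\sqrt{a^2\cos^2\theta+b^2\sin^2\theta}$ and $\vec{e}_{\overrightarrow{OP}}$ directly instead of quoting Proposition \ref{xxsec2.1} and (\ref{xxsecpla5}), but it reaches the same bracket $-c^2\sin^2\theta\cos^2\theta+R(\sin^2\theta-\cos^2\theta)$ (with your $R$), uses the same cancellation of the mixed $\sin^2\theta\cos^2\theta$ terms, and finds the same vector $ab\{-b\sin\theta,a\cos\theta\}/R^{3/2}$ for the rotation.
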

\begin{proof} (i)   Noting that $\mathcal{D}=\sqrt{a^2\cos^2\theta+b^2\sin^2\theta }$\,(equals to $\xi(\theta)$ in (\ref{xxsecapl3})), we find that
\begin{align}
 \frac{ \rm{d}\mathcal{D}}{\rm{d}\theta}&= \frac{-2a^2\cos\theta\sin\theta+2b^2\sin\theta\cos\theta}{2\sqrt{a^2\cos^2\theta+b^2\sin^2\theta }}\notag\\
 &=  \frac{  -c^2\sin\theta\cos\theta }{ \sqrt{a^2\cos^2\theta+b^2\sin^2\theta }} ,\notag
\end{align}
\begin{align}
\frac{ \rm{d}^2\mathcal{D}}{\rm{d}\theta^2}=& -\frac{1}{2}(a^2\cos^2\theta+b^2\sin^2\theta)^{-\frac{3}{2}}(  -c^2\sin\theta\cos\theta)\times 2(  -c^2\sin\theta\cos\theta)\notag\\
  &+(a^2\cos^2\theta+b^2\sin^2\theta)^{-\frac{1}{2}}( -c^2\cos^2\theta  +c^2\sin^2\theta) \notag\\
   =& - (a^2\cos^2\theta+b^2\sin^2\theta)^{-\frac{3}{2}}(   c^2\sin \theta\cos\theta )^2\notag\\
  &+(a^2\cos^2\theta+b^2\sin^2\theta)^{-\frac{1}{2}}(  -c^2\cos^2\theta  +c^2\sin^2\theta ) \notag\\
  =& c^2(a^2\cos^2\theta+b^2\sin^2\theta)^{-\frac{3}{2}}[-   c^2\sin ^2\theta\cos^2\theta
  +(a^2\cos^2\theta+b^2\sin^2\theta)(  -\cos^2\theta  +\sin^2\theta )]  \notag\\
   =& c^2(a^2\cos^2\theta+b^2\sin^2\theta)^{-\frac{3}{2}}(-   c^2\sin ^2\theta\cos^2\theta
\notag\\&\ \ \  -a^2\cos^4\theta+a^2\cos^2\theta\sin^2\theta-b^2\sin^2\theta\cos^2\theta +b^2\sin^4\theta) \notag\\
  =&\frac{c^2(-a^2\cos^4\theta +b^2\sin^4\theta)}{(a^2\cos^2\theta+b^2\sin^2\theta)^{\frac{3}{2}}}.\notag
\end{align}
The values of $\frac{ \rm{d}\mathcal{D}}{\rm{d}\theta}$ and $ \mathcal{D}$ at $\theta=0$ obviously hold.

(ii) Note that \begin{align}\vec{e}_{\overrightarrow{OP}}=\frac{\{a\cos\theta, b\sin\theta\}}{\sqrt{a^2\cos^2\theta+b^2\sin^2\theta}},\notag\end{align}
so
\begin{align}\frac{\mathrm{d}\vec{e}_{\overrightarrow{OP}}}{\mathrm{d}\theta}
 =&-\frac{1}{2}(a^2\cos^2\theta+b^2\sin^2\theta)^{-\frac{3}{2}}(-2a^2\cos\theta\sin\theta+2b^2\sin\theta \cos\theta)\{a\cos\theta, b\sin\theta\}\notag\\&+(a^2\cos^2\theta+b^2\sin^2\theta)^{-\frac{1}{2}}\{-a\sin\theta, b\cos\theta\}\notag\\
=&(a^2\cos^2\theta+b^2\sin^2\theta)^{-\frac{3}{2}}(a^2\cos\theta\sin\theta-b^2\sin\theta \cos\theta)\{a\cos\theta, b\sin\theta\}\notag\\
 &+(a^2\cos^2\theta+b^2\sin^2\theta)^{-\frac{3}{2}}(a^2\cos^2\theta+b^2\sin^2\theta)\{-a\sin\theta, b\cos\theta\}\notag\\
= & (a^2\cos^2\theta+b^2\sin^2\theta)^{-\frac{3}{2}}\notag\\
 &\ \ \ \times \{a^3\cos^2\theta\sin\theta-ab^2\sin\theta \cos^2\theta, a^2b\cos\theta\sin^2\theta-b^3\sin^2\theta \cos\theta\}\notag\\
&+ (a^2\cos^2\theta+b^2\sin^2\theta)^{-\frac{3}{2}}\notag\\
 &\ \ \ \times\{-a^3\cos^2\theta\sin\theta-ab^2\sin^3\theta, a^2 b\cos^3\theta+b^3\sin^2\theta\cos\theta\} \notag\\
=&\frac{ \{- ab^2\sin\theta \cos^2\theta- ab^2\sin^3\theta, a^2b\cos\theta\sin^2\theta +a^2 b\cos^3\theta\}}{(a^2\cos^2\theta+b^2\sin^2\theta)^{\frac{3}{2}}}\notag\\
=&\frac{ \{ -ab^2\sin\theta ,  a^2b\cos\theta \}}{(a^2\cos^2\theta+b^2\sin^2\theta)^{\frac{3}{2}}},\notag\\
 =&\frac{ab\{ -b\sin\theta ,  a\cos\theta \}}{ (a^2\cos^2\theta+b^2\sin^2\theta)^{\frac{3}{2}}},\notag\end{align}
 \begin{align}|\frac{\mathrm{d}\vec{e}_{\overrightarrow{OP}}}{\mathrm{d}\theta}|= \frac{ab}{a^2\cos^2\theta+b^2\sin^2\theta}.\notag\end{align}
This proves the assertion (ii).
 \end{proof}

We can summarize the conclusion of Proposition \ref{xxsec5.1} as follows:
\begin{enumerate}
  \item[$\textcircled{1}$]  the second-order derivative of $|\overrightarrow{OP}|$ with respect to  $\theta$ has an average value of $0$ over $[0,2\pi]$, and it takes the value $0$ at $\theta=\arctan\sqrt{\frac{a}{b}}, \pi\pm \arctan\sqrt{\frac{a}{b}}, 2\pi-\arctan\sqrt{\frac{a}{b}}$;
\item[$\textcircled{2}$]  the average rotational speed of $\overrightarrow{OP}$ with respect to $\theta$ is 1 in  any quadrant.
\end{enumerate}

We can now state the following unique existence theorem originating from frame (\ref{xxsecapl2}).
\begin{theorem}\label{xxsec5.2} Let $O$ be the origin of $\mathbb{R}^2$, and let $a>c>0, b=\sqrt{a^2-c^2}$. Suppose that $P$ is a moving point in $\mathbb{R}^2$ satisfying the following conditions:
\begin{enumerate}
  \item[$\mathrm{1)}$] for $\mathcal{D}=|\overrightarrow{OP}|$, we have
  \begin{align}
&\frac{ \rm{d}^2\mathcal{D}}{\rm{d}\theta^2}
   =\frac{c^2(-a^2\cos^4\theta +b^2\sin^4\theta)}{(a^2\cos^2\theta+b^2\sin^2\theta)^{\frac{3}{2}}},  \notag\\
&\frac{ \rm{d}\mathcal{D}}{\rm{d}\theta}|_{\theta=0}=0, \mathcal{D}|_{\theta=0}=a; \notag
\end{align}
\item[$\mathrm{2)}$]
  $\frac{\mathrm{d}\vec{e}_{\overrightarrow{OP}}}{\mathrm{d}\theta}=\frac{ab\{ -b\sin\theta ,  a\cos\theta \}}{ (a^2\cos^2\theta+b^2\sin^2\theta)^{\frac{3}{2}}},
 \vec{e}_{\overrightarrow{OP}}|_{\theta=0}=\{1,0\}
 $.
\end{enumerate}
Here $0\leq\theta\leq2\pi$. Then the trajectory of $P$ is (\ref{xxsecapl1}).
\end{theorem}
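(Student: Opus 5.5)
The plan is to reduce Theorem \ref{xxsec5.2} to Theorem \ref{xxsec2.3}, specialised to $x(\theta)=a\cos\theta$, $y(\theta)=b\sin\theta$, $t_0=0$, $t_1=2\pi$. I will argue directly, as in the proof of Theorem \ref{xxsec2.2}, treating the two conditions separately: condition 1) will determine the length $\mathcal{D}(\theta)$, and condition 2) will determine the direction $\vec{e}_{\overrightarrow{OP}}(\theta)$.

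\emph{Step 1 (the length).} Condition 1) is a second-order linear ODE $\mathcal{D}''=F(\theta)$ whose right-hand side is continuous in $\theta$, because $a^2\cos^2\theta+b^2\sin^2\theta\ge b^2>0$. Integrating twice with the initial data $\mathcal{D}'(0)=0$ and $\mathcal{D}(0)=a$ gives a unique solution. The computation in Proposition \ref{xxsec5.1}(i) shows that $\sqrt{a^2\cos^2\theta+b^2\sin^2\theta}$ has exactly this second derivative and these initial values. Hence $\mathcal{D}(\theta)=\sqrt{a^2\cos^2\theta+b^2\sin^2\theta}$ on $[0,2\pi]$.

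\emph{Step 2 (the direction).} Condition 2) is an ODE $\vec{e}\,'=G(\theta)$ with continuous right-hand side and the initial value $\{1,0\}$. Integration again gives uniqueness. The computation in Proposition \ref{xxsec5.1}(ii) shows that $\{a\cos\theta,b\sin\theta\}/\sqrt{a^2\cos^2\theta+b^2\sin^2\theta}$ satisfies the equation, and at $\theta=0$ it equals $\{1,0\}$. Therefore $\vec{e}_{\overrightarrow{OP}}(\theta)=\{a\cos\theta,b\sin\theta\}/\sqrt{a^2\cos^2\theta+b^2\sin^2\theta}$.

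\emph{Step 3 (recombining).} Writing $\overrightarrow{OP}=\mathcal{D}\,\vec{e}_{\overrightarrow{OP}}$ gives $\overrightarrow{OP}(\theta)=\{a\cos\theta,b\sin\theta\}$, which is (\ref{xxsecapl1}). This is the same as the $\lambda=1$ step in Theorem \ref{xxsec2.2}. Here $\lambda$ is forced directly, since both the length and the direction are known.

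The main point requiring care is uniqueness in Steps 1 and 2. The paper's earlier uniqueness proofs implicitly assume that $\overrightarrow{OP}$ is parallel to the target curve. Instead of relying on that, I will make uniqueness explicit: each condition prescribes the derivative, or the second derivative, as a known function of $\theta$ alone. The difference of two solutions then has zero derivative and zero initial value, so it vanishes identically. A secondary check is that $a>c>0$ makes $b>0$, so no denominator vanishes on $[0,2\pi]$.
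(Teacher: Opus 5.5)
Your proposal is correct and follows the paper's own proof. The paper uses condition 2) to write $\overrightarrow{OP}=\lambda(\theta)\{a\cos\theta,b\sin\theta\}$ with $\lambda>0$, uses condition 1) to get $|\overrightarrow{OP}|^2=a^2\cos^2\theta+b^2\sin^2\theta$, and concludes $\lambda=1$. Your Steps 1 and 2 make explicit the integrations, and the uniqueness of their solutions from the initial data, that the paper leaves implicit; the derivative formulas you take from Proposition \ref{xxsec5.1} are correct.
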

\begin{proof} Assume point $P$ is at $(x,y)$. We can set $x=a\lambda(\theta)\cos\theta$ and $y=b\lambda(\theta)\sin\theta$ by 2), where $\lambda(\theta)> 0 $, $0\leq\theta\leq2\pi$. It follows from  1) that
 \begin{equation}\label{xxsecapl4}
x^2+y^2 =   a^2\cos^2\theta^2+b^2\sin^2\theta  , \ \ \ 0\leq\theta\leq2\pi.
\end{equation}
Taking $x=a\lambda(\theta)\cos\theta$, $y=b\lambda(\theta)\sin\theta$ into (\ref{xxsecapl4}) yields
$$ \lambda^2(\theta)(a^2 \cos^2 \theta+b^2\sin^2\theta) = a^2 \cos^2 \theta+b^2\sin^2\theta  , \ \ \ 0\leq\theta\leq2\pi.$$
Thus $\lambda^2= 1$, and we must have $\lambda=1$ by the hypothesis. This completes the proof.
\end{proof}

\subsection{Ellipses with general rotating frames 2}

The origin of frame (\ref{xxsecapl2}) is  $O(0,0)$. For an ellipse, the foci have special significance. In order to further investigate the behaviour of
 moving points on ellipses, it is necessary to introduce another rotating frame with the focus of (\ref{xxsecapl1}) as its origin.

Let us first choose  point $A$ in (\ref{xxsecpla9}) to be the focus $(c,0)$ and rename it by $A_1$. Then (\ref{xxsecpla9}) becomes
\begin{equation}\label{xxsecapl5}
 \{A_1; \vec{e}_1(\theta), \vec{e}_2(\theta) \}, \ \ \ 0\leq\theta\leq2\pi,
\end{equation}
where $\vec{e}_1(\theta)=\frac{\{a\cos\theta-c, b\sin\theta\}}{\sqrt{(a\cos\theta-c)^2+b^2\sin^2\theta}}$, $\vec{e}_2(\theta)=\frac{\{ -b\sin\theta,a\cos\theta-c\}}{\sqrt{(a\cos\theta-c)^2+b^2\sin^2\theta}}$. If the coordinates (\ref{xxsecapl5}) are $\xi_1$, $\eta_1$, then the equation of (\ref{xxsecapl1}) under (\ref{xxsecapl5}) becomes
\begin{equation}\label{xxsecapl6}
  \xi_1(\theta)= \sqrt{(a\cos\theta-c)^2+b^2\sin^2\theta },\   \eta_1(\theta)=0,\ \ \ 0\leq\theta\leq2\pi.
\end{equation}

Let us discuss $\xi_1(\theta)$  firstly in detail.
\begin{proposition}\label{xxsec5.3}  Let $\xi_1(\theta)$ be as in (\ref{xxsecapl6}).
\begin{enumerate}
  \item[$\mathrm{(1)}$] $\frac{ \rm{d}\xi_1(\theta)}{\rm{d}\theta}>0$, $\theta\in (0,\pi)$; $\frac{ \rm{d}\xi_1(\theta)}{\rm{d}\theta}<0$, $\theta\in (\pi, 2\pi)$.
\item[$\mathrm{(2)}$] $\frac{\rm{d}^2\xi_1(\theta)}{\rm{d}\theta^2}>0$, $\theta\in (0,\frac{\pi}{2})\cup (\frac{3\pi}{2},2\pi) $; $\frac{\rm{d}^2\xi_1(\theta)}{\rm{d}\theta^2}<0$, $\theta\in (\frac{\pi}{2},\pi)\cup (\pi,\frac{3\pi}{2}) $.
\item[$\mathrm{(3)}$]     $\frac{\rm{d}^3\xi_1(\theta)}{\rm{d}\theta^3}<0$, $\theta\in(0,\pi)$; $\frac{\rm{d}^3\xi_1(\theta)}{\rm{d}\theta^3}>0$, $\theta\in(\pi,2\pi)$.
\end{enumerate}
\end{proposition}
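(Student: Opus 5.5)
The plan is to avoid differentiating the square root directly and instead first simplify $\xi_1(\theta)$ to a closed form, exploiting the focal-distance property of the ellipse. Throughout, $c=\sqrt{a^2-b^2}>0$, since $a>b>0$ and $A_1=(c,0)$ is the focus; thus $b^2=a^2-c^2$.

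The first step is to expand the radicand in (\ref{xxsecapl6}) and substitute $b^2=a^2-c^2$:
\begin{align}
(a\cos\theta-c)^2+b^2\sin^2\theta
=&a^2\cos^2\theta-2ac\cos\theta+c^2+(a^2-c^2)\sin^2\theta\notag\\
=&a^2-2ac\cos\theta+c^2\cos^2\theta=(a-c\cos\theta)^2.\notag
\end{align}
Since $a>c$, we have $a-c\cos\theta\geq a-c>0$ for all $\theta$, so taking the square root gives
$$\xi_1(\theta)=a-c\cos\theta,\ \ \ 0\leq\theta\leq2\pi.$$
This is the classical focal-radius formula. It also shows that $\xi_1$ is smooth, even though the original expression involves a square root.

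The second step is to differentiate this closed form three times:
$$\frac{\mathrm{d}\xi_1}{\mathrm{d}\theta}=c\sin\theta,\ \ \ \frac{\mathrm{d}^2\xi_1}{\mathrm{d}\theta^2}=c\cos\theta,\ \ \ \frac{\mathrm{d}^3\xi_1}{\mathrm{d}\theta^3}=-c\sin\theta.$$

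The final step reads off the three sign assertions, using $c>0$ and the elementary signs of $\sin$ and $\cos$:
\begin{itemize}
\item $\sin\theta>0$ on $(0,\pi)$ and $\sin\theta<0$ on $(\pi,2\pi)$. This gives (1) directly, and gives (3) after the sign flip from $-c\sin\theta$.
\item $\cos\theta>0$ on $(0,\frac{\pi}{2})\cup(\frac{3\pi}{2},2\pi)$ and $\cos\theta<0$ on $(\frac{\pi}{2},\pi)\cup(\pi,\frac{3\pi}{2})$. This gives (2). The point $\theta=\pi$ is excluded only to match the statement's form; there $\cos\theta=-1<0$ as well.
\end{itemize}

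The only step requiring care is the algebraic simplification of the radicand, specifically recognizing the perfect square $(a-c\cos\theta)^2$, together with checking positivity so that the square root is exactly $a-c\cos\theta$. Once that identity is established, the rest is routine.
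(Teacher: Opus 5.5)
Your proof is correct, and it takes a genuinely different and far shorter route than the paper.

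The paper never simplifies the radicand. For (1) it differentiates the square root directly and reads the sign off the numerator $ac\sin\theta-c^2\sin\theta\cos\theta=c\sin\theta\,(a-c\cos\theta)$. For (2) and (3) it introduces the distance to the other focus, $\tilde{\xi}_1(\theta)=\sqrt{(a\cos\theta+c)^2+b^2\sin^2\theta}$, and asserts without proof that $\xi_1''+\tilde{\xi}_1''=0$ and $\xi_1'''+\tilde{\xi}_1'''=0$. It then assumes the opposite sign, clears the positive denominators ($\xi_1^3,\tilde{\xi}_1^3$, respectively $\xi_1^5,\tilde{\xi}_1^5$), and subtracts the two resulting inequalities to reach a contradiction. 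In case (3) that contradiction is $2ac\sin\theta\,(a^4+10a^2c^2\cos^2\theta+5c^4\cos^4\theta)\leq 0$, reached only after the lengthy expansion of Appendix B.

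Your recognition of the perfect square $(a-c\cos\theta)^2$, i.e.\ the focal-radius formula $\xi_1=a-c\cos\theta$, makes all three sign statements immediate. It also gives the exact derivatives $c\sin\theta$, $c\cos\theta$, $-c\sin\theta$, which account for the entries of Table 5.1. And it justifies the identities the paper takes for granted, since $\tilde{\xi}_1=a+c\cos\theta$ and $\xi_1+\tilde{\xi}_1=2a$.

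The paper's approach has only the presentational merit of working with the unsimplified expressions that reappear in Proposition \ref{xxsec5.4} and Theorem \ref{xxsec5.5}. It is not more general: its key identity $\xi_1^{(k)}+\tilde{\xi}_1^{(k)}=0$ is exactly the focal-sum property that your closed form makes explicit.
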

\begin{proof} (1) This follows immediately from the following calculation:
\begin{align}
 \frac{ \rm{d}\xi_1}{\rm{d}\theta}&= \frac{2(a\cos\theta-c)(-a\sin\theta)+2b^2\sin\theta\cos\theta}{2\sqrt{(a\cos\theta-c)^2+b^2\sin^2\theta }}\notag\\
 &= \frac{ -2a^2\sin\theta\cos\theta+2ac\sin\theta + 2b^2\sin\theta\cos\theta }{2\sqrt{(a\cos\theta-c)^2+b^2\sin^2\theta }}\notag\\
&=  \frac{ ac\sin\theta -c^2\sin\theta\cos\theta }{ \sqrt{(a\cos\theta-c)^2+b^2\sin^2\theta }}. \label{xxsecapl7}
\end{align}

(2) Taking the derivative of (\ref{xxsecapl7}), we obtain
\begin{align}
\frac{ \rm{d}^2\xi_1}{\rm{d}\theta^2}=& -\frac{1}{2}[(a\cos\theta-c)^2+b^2\sin^2\theta ]^{-\frac{3}{2}}( ac\sin\theta -c^2\sin\theta\cos\theta)\times 2( ac\sin\theta -c^2\sin\theta\cos\theta)\notag\\
  &+[(a\cos\theta-c)^2+b^2\sin^2\theta ]^{-\frac{1}{2}}(ac\cos\theta-c^2\cos^2\theta  +c^2\sin^2\theta) \notag\\
   =& - [(a\cos\theta-c)^2+b^2\sin^2\theta ]^{-\frac{3}{2}}(   ac\sin\theta -c^2\sin \theta\cos\theta )^2\notag\\
  &+[(a\cos\theta-c)^2+b^2\sin^2\theta ]^{-\frac{1}{2}}(   ac\cos\theta -c^2\cos2\theta ). \label{xxsecapl8}
\end{align}
Let $\tilde{\xi}_1(\theta)= \sqrt{(a\cos\theta+c)^2+b^2\sin^2\theta }$. Then we have
 \begin{align}
 \frac{ \rm{d}\tilde{\xi}_1}{\rm{d}\theta}&= \frac{2(a\cos\theta+c)(-a\sin\theta)+2b^2\sin\theta\cos\theta}{2\sqrt{(a\cos\theta+c)^2+b^2\sin^2\theta }}\notag\\
 &= \frac{ -2a^2\sin\theta\cos\theta-2ac\sin\theta + 2b^2\sin\theta\cos\theta }{2\sqrt{(a\cos\theta+c)^2+b^2\sin^2\theta }}
 \notag\\
 &= \frac{-ac\sin\theta -c^2\sin\theta\cos\theta }{\sqrt{(a\cos\theta+c)^2+b^2\sin^2\theta }}
, \notag
\end{align}
\begin{align}
\frac{ \rm{d}^2\tilde{\xi}_1}{\rm{d}\theta^2}=& -\frac{1}{2}[(a\cos\theta+c)^2+b^2\sin^2\theta ]^{-\frac{3}{2}}(-ac\sin\theta -c^2\sin\theta\cos\theta )\times 2(-ac\sin\theta -c^2\sin\theta\cos\theta )\notag\\
  &+[(a\cos\theta+c)^2+b^2\sin^2\theta ]^{-\frac{1}{2}}(  -ac\cos\theta-c^2\cos^2\theta+c^2\sin^2\theta ) \notag\\
   =& - [(a\cos\theta+c)^2+b^2\sin^2\theta ]^{-\frac{3}{2}}(   ac\sin\theta +c^2\sin \theta\cos\theta )^2\notag\\
  &+[(a\cos\theta+c)^2+b^2\sin^2\theta ]^{-\frac{1}{2}}(   -ac\cos\theta -c^2\cos2\theta ).  \label{xxsecapl9}
\end{align}
One should note that $\frac{ \rm{d}^2\xi_1}{\rm{d}\theta^2}+\frac{ \rm{d}^2\tilde{\xi}_1}{\rm{d}\theta^2}=0$.  If $\frac{ \rm{d}^2\xi_1}{\rm{d}\theta^2}$ is nonpositive in the first quadrant,
then we get from (\ref{xxsecapl8}) and (\ref{xxsecapl9}) that
\begin{align}
& - (   ac\sin\theta -c^2\sin \theta\cos\theta )^2
  +[(a\cos\theta-c)^2+b^2\sin^2\theta ](   ac\cos\theta -c^2\cos2\theta )\leq0,\notag\\
 &- (   ac\sin\theta +c^2\sin \theta\cos\theta )^2
 +[(a\cos\theta+c)^2+b^2\sin^2\theta ](  - ac\cos\theta -c^2\cos2\theta )\geq0, \notag
\end{align}
where $\theta\in(0,\frac{\pi}{2})$. Subtracting the first inequality by the second, we obtain
$$4 ac^3\sin^2\theta \cos\theta+2ac\cos\theta ( a^2\cos^2\theta+c^2+b^2\sin^2\theta ) +4 ac^3\cos2\theta\cos\theta\leq0,$$
that is,
$$ 2  c^2\cos^2\theta  +   a^2\cos^2\theta+c^2+b^2\sin^2\theta   \leq0,$$
where $ \theta\in(0,\frac{\pi}{2})$. A contradiction.
Hence $\frac{ \rm{d}^2\xi_1}{\rm{d}\theta^2}$ is positive in $(0,\frac{\pi}{2})$. One needs only to consider $\frac{ \rm{d}^2\xi_1}{\rm{d}\theta^2}|_{\pi-\theta}$, $\frac{ \rm{d}^2\xi_1}{\rm{d}\theta^2}|_{\pi+\theta}$, and $\frac{ \rm{d}^2\xi_1}{\rm{d}\theta^2}|_{2\pi-\theta}$ for the other three cases.

(3) See Appendix \ref{Appendix B}.
\end{proof}

From Proposition \ref{xxsec5.3}, we have the following Table 5.1.
\begin{table}[h]
\centering
\begin{tabular}{c|c|c|c|c|c|c|c|c|c}
   \hline
       & 0& $(0,\frac{\pi}{2})$ & $\frac{\pi}{2} $ &  $(\frac{\pi}{2} , \pi)$  & $\pi$ & $(\frac{\pi}{2} ,\frac{3\pi}{2})$ & $\frac{3\pi}{2} $ &  $(\frac{3\pi}{2} , 2\pi)$  & $2\pi$   \\
    \hline
$\xi_1$ &$a-c$& $\nearrow$ & $a$& $\nearrow$& a+c& $\searrow$ & $a$& $\searrow$& a-c \\
    \hline
$\frac{ \rm{d}\xi_1}{\rm{d}\theta}$ &0 &$+$ $\nearrow$ & $c$ &$+$ $\searrow$  & $0$  & $-$  $\searrow$   & $-c$ & $-$ $\nearrow$&  0 \\
    \hline
$\frac{ \rm{d}^2\xi_1}{\rm{d}\theta^2} $&  $c$& $+ $ $\searrow$&  0 &$-$  $\searrow$  & $ -c$  &$-$  $\nearrow$  &0 & $+ $  $\nearrow$    &    c\\
    \hline
    $\frac{ \rm{d}^3\xi_1}{\rm{d}\theta^3} $&  & $- $&    &$-$  & $  $  &$+$  &  & $+ $  &    \\
    \hline
 \end{tabular}
 \vspace{3mm}

 \textbf{Table 5.1}
\end{table}

With the help of (\ref{xxsecapl2}), we have known that the behaviour of any moving point $P$ on (\ref{xxsecapl1}) can be seen as the composition of vibration and rotation.  By utilizing (\ref{xxsecapl5}), we can obtain a similar result as follows.
\begin{proposition}\label{xxsec5.4} Let $P$ be a moving point on (\ref{xxsecapl1}), and let its parameter be $\theta$.
 \begin{enumerate}
  \item[$\mathrm{(i)}$] For $\mathcal{D}=|\overrightarrow{A_1P}|$, we have
  \begin{align}
\frac{ \rm{d}^2\mathcal{D}}{\rm{d}\theta^2}
   =&\frac{-(   ac\sin\theta -c^2\sin \theta\cos\theta )^2+[(a\cos\theta-c)^2+b^2\sin^2\theta ](   ac\cos\theta -c^2\cos2\theta )}{[(a\cos\theta-c)^2+b^2\sin^2\theta ]^{\frac{3}{2}}} , \notag\\
  \frac{ \rm{d}\mathcal{D}}{\rm{d}\theta}|_{\theta=0}&=0,
   \mathcal{D}_{\theta=0}=a-c.\notag
\end{align}
\item[$\mathrm{(ii)}$] The rotational speed of $\overrightarrow{A_1P}$ with respect to $\theta$
 is $$\frac{b(a-c\cos\theta)}{(a\cos\theta-c)^2+b^2\sin^2\theta }.$$
\end{enumerate}
\end{proposition}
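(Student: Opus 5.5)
Both parts follow by specializing Proposition \ref{xxsec2.5} to $x(\theta)=a\cos\theta$, $y(\theta)=b\sin\theta$, $(a_1,a_2)=(c,0)$, using $c^2=a^2-b^2$. The focus does not lie on the ellipse, so that proposition applies. We also have $\mathcal{D}=\xi_1(\theta)$ from (\ref{xxsecapl6}).

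For (i), we do not recompute anything. The first derivative is already (\ref{xxsecapl7}):
\[
\frac{\mathrm{d}\mathcal{D}}{\mathrm{d}\theta}=\frac{ac\sin\theta-c^2\sin\theta\cos\theta}{\sqrt{(a\cos\theta-c)^2+b^2\sin^2\theta}}.
\]
The second derivative is (\ref{xxsecapl8}). Putting its two terms over the common denominator $[(a\cos\theta-c)^2+b^2\sin^2\theta]^{3/2}$ gives exactly the stated quotient. As a check, (\ref{xxsecapl8}) should agree with (\ref{xxsecpla11}) after substitution; this uses $x'^2+y'^2+(x-c)x''+yy''=ac\cos\theta-c^2\cos 2\theta$. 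That identity follows by expanding and replacing $b^2=a^2-c^2$.

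The initial values come from setting $\theta=0$. The numerator of (\ref{xxsecapl7}) vanishes there, so $\mathcal{D}'(0)=0$. Also $\mathcal{D}(0)=\sqrt{(a-c)^2}=a-c$, which is positive since $a>c$.

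For (ii), Proposition \ref{xxsec2.5}(ii) gives the rotational speed as
\[
\frac{|(x-c)y'-x'y|}{(x-c)^2+y^2}.
\]
The main work is the numerator:
\[
(a\cos\theta-c)\,b\cos\theta+a\sin\theta\cdot b\sin\theta=b(a-c\cos\theta).
\]
This is nonnegative because $a>c\ge c\cos\theta$, so the absolute value can be dropped. The result is $\frac{b(a-c\cos\theta)}{(a\cos\theta-c)^2+b^2\sin^2\theta}$. The denominator could be simplified further to $(a-c\cos\theta)^2$, but the statement leaves it unsimplified.

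The only step needing care is this sign check that removes the absolute value, together with the algebra in (i). Neither is a real obstacle.
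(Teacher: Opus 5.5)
Your proposal is correct and matches the paper in substance. For (i) you read the result off (\ref{xxsecapl7})--(\ref{xxsecapl8}) and the values at $\theta=0$, exactly as the paper does. For (ii) you specialize Proposition~\ref{xxsec2.5}(ii) with $(a_1,a_2)=(c,0)$, whereas the paper redoes the quotient-rule differentiation of $\vec{e}_{\overrightarrow{A_1P}}$; this is the same computation in compressed form, though the paper's longer version also yields the vector $\frac{\mathrm{d}\vec{e}_{\overrightarrow{A_1P}}}{\mathrm{d}\theta}$ used in Theorem~\ref{xxsec5.5}.

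Two details you supply are correct and left implicit in the paper. Your sign check $a-c\cos\theta>0$ justifies dropping the absolute value. You also simplify the denominator to $(a-c\cos\theta)^2$, so the speed is $b/(a-c\cos\theta)$.
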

\begin{proof} (i)  Note that $\mathcal{D}=\sqrt{(a\cos\theta-c)^2+b^2\sin^2\theta }$\, (equals to $\xi_1(\theta)$ in (\ref{xxsecapl6})), so the results follow from  Proposition \ref{xxsec5.3} and its proof, as well as Table 5.1.

(ii) We begin by noticing that $$\vec{e}_{\overrightarrow{A_1P}}=\frac{\{a\cos\theta-c, b\sin\theta\}}{\sqrt{(a\cos\theta-c)^2+b^2\sin^2\theta}}.$$
Its derivative is
\begin{align}\frac{\mathrm{d}\vec{e}_{\overrightarrow{A_1P}}}{\mathrm{d}\theta}
=&-\frac{1}{2}[ (a\cos\theta-c)^2+b^2\sin^2\theta ]^{-\frac{3}{2}}\notag\\&\times[ 2(a\cos\theta-c)(-a\sin\theta) + 2 b^2\sin\theta\cos\theta]\{a\cos\theta-c, b\sin\theta\}\notag\\&+[ (a\cos\theta-c)^2+b^2\sin^2\theta ]^{-\frac{1}{2}}\{-a\sin\theta, b\cos\theta\} \notag\\
=&[ (a\cos\theta-c)^2+b^2\sin^2\theta ]^{-\frac{3}{2}}\notag\\&\times[ (a\cos\theta-c)(a\sin\theta) - b^2\sin\theta\cos\theta]\{a\cos\theta-c, b\sin\theta\}\notag\\&+[ (a\cos\theta-c)^2+b^2\sin^2\theta ]^{-\frac{3}{2}}[ (a\cos\theta-c)^2+b^2\sin^2\theta ]\{-a\sin\theta, b\cos\theta\} \notag\\
=&[ (a\cos\theta-c)^2+b^2\sin^2\theta ]^{-\frac{3}{2}}\notag\\&\times( -ac\sin\theta+a^2\sin\theta\cos\theta-b^2\sin\theta\cos\theta)\{a\cos\theta-c, b\sin\theta\}\notag\\
&+[ (a\cos\theta-c)^2+b^2\sin^2\theta ]^{-\frac{3}{2}}\notag\\&\times(a^2\cos^2\theta+c^2+b^2\sin^2\theta-2ac\cos\theta)\{-a\sin\theta, b\cos\theta\}\notag\\
=&[ (a\cos\theta-c)^2+b^2\sin^2\theta ]^{-\frac{3}{2}}\notag\\&\times
\{-a^2c\sin\theta\cos\theta+a^3\sin\theta\cos^2\theta-ab^2\sin\theta\cos^2\theta\notag\\
&\ \ \ \ \ \ +ac^2\sin\theta-a^2c\sin\theta\cos\theta+b^2c\sin\theta\cos\theta
, \notag\\&\ \ \ \  \ \ \ \  -abc\sin^2\theta+a^2b\sin^2\theta\cos\theta-b^3\sin^2\theta\cos\theta\}\notag\\& +[ (a\cos\theta-c)^2+b^2\sin^2\theta ]^{-\frac{3}{2}}\notag\\&\times\{-a^3\sin\theta\cos^2\theta-ac^2\sin\theta-ab^2\sin^3\theta+2a^2c\sin\theta\cos\theta,\notag\\&
\ \ \  \  \ \ \ \  \ \ \ \   a^2b\cos^3\theta+bc^2\cos\theta+b^3\sin^2\theta\cos\theta-2abc\cos^2\theta \}\notag\\
=&[ (a\cos\theta-c)^2+b^2\sin^2\theta ]^{-\frac{3}{2}}\notag\\&\times
 \{-2a^2c\sin\theta\cos\theta+a^3\sin\theta\cos^2\theta-ab^2\sin\theta\cos^2\theta+ac^2\sin\theta+b^2c\sin\theta\cos\theta
, \notag\\&\ \ \ \ \ \ -abc\sin^2\theta+a^2b\sin^2\theta\cos\theta-b^3\sin^2\theta\cos\theta\}\notag\\&
 +[ (a\cos\theta-c)^2+b^2\sin^2\theta ]^{-\frac{3}{2}}\notag\\&\times\{-a^3\sin\theta\cos^2\theta-ac^2\sin\theta-ab^2\sin^3\theta+2a^2c\sin\theta\cos\theta,\notag\\&
\ \ \ \ \ \ a^2b\cos^3\theta+bc^2\cos\theta+b^3\sin^2\theta\cos\theta-2abc\cos^2\theta \} \notag\\
=&[ (a\cos\theta-c)^2+b^2\sin^2\theta ]^{-\frac{3}{2}}
\{-ab^2\sin\theta\cos^2\theta+b^2c\sin\theta\cos\theta-ab^2\sin^3\theta,
\notag\\
& -abc\sin^2\theta+a^2b\sin^2\theta\cos\theta+a^2b\cos^3\theta+bc^2\cos\theta-2abc\cos^2\theta\}\notag\\
=&[ (a\cos\theta-c)^2+b^2\sin^2\theta ]^{-\frac{3}{2}}
\notag\\
&\times\{-ab^2\sin\theta+b^2c\sin\theta\cos\theta
, -abc+a^2b\cos\theta+bc^2\cos\theta-abc\cos^2\theta\}\notag\\
=&[ (a\cos\theta-c)^2+b^2\sin^2\theta ]^{-\frac{3}{2}}\notag\\
&\times \{b^2\sin\theta(  -a  +c\cos\theta),
 ab(a \cos\theta-c )+bc\cos\theta(c-a\cos\theta)\}\notag\\
 =&[ (a\cos\theta-c)^2+b^2\sin^2\theta ]^{-\frac{3}{2}}   \{b^2\sin\theta(  -a  +c\cos\theta),
 b(a \cos\theta-c )(a-c\cos\theta)\}\notag\\
 =& \frac{b(a-c\cos\theta) \{-b\sin\theta,
  a \cos\theta-c \}}{[ (a\cos\theta-c)^2+b^2\sin^2\theta ]^{\frac{3}{2}}}.\notag\end{align}
Consequently,
\begin{align}
|\frac{\mathrm{d}\vec{e}_{\overrightarrow{A_1P}}}{\mathrm{d}\theta}|
&=\frac{b(a-c\cos\theta)}{(a\cos\theta-c)^2+b^2\sin^2\theta} .\notag
\end{align}
\end{proof}

Proposition \ref{xxsec5.4} can be concisely summarized as follows:
\begin{enumerate}
 \item[$\textcircled{1}$] the second-order derivative of $|\overrightarrow{A_1P}|$ with respect to  $\theta$ has an average value of $0$ over $[0,2\pi]$, and it  takes  the value $0$ at $\theta=\frac{\pi}{2}, \frac{3\pi}{2}$;
\item[$\textcircled{2}$] the average rotational speed of $\overrightarrow{A_1P}$ with respect to  $\theta$ is 1 in $[0, \pi]$ and $[\pi, 2\pi]$.
\end{enumerate}

The following is now almost obvious.
\begin{theorem}\label{xxsec5.5} Let $A_1(c,0)$ be a fixed point of $\mathbb{R}^2$, and let $a>c>0, b=\sqrt{a^2-c^2}$. Suppose that $P$ is a moving point in $\mathbb{R}^2$ satisfying the following conditions:
\begin{enumerate}
  \item[$\mathrm{1)}$] for $\mathcal{D}=|\overrightarrow{A_1P}|$, we have
  \begin{align}
&\frac{ \rm{d}^2\mathcal{D}}{\rm{d}\theta^2}
   =\frac{-(   ac\sin\theta -c^2\sin \theta\cos\theta )^2+[(a\cos\theta-c)^2+b^2\sin^2\theta ](   ac\cos\theta -c^2\cos2\theta )}{[(a\cos\theta-c)^2+b^2\sin^2\theta ]^{\frac{3}{2}}},  \notag\\
 &\frac{ \rm{d}\mathcal{D}}{\rm{d}\theta}|_{\theta=0}=0, \mathcal{D}|_{\theta=0}=a-c;  \notag
\end{align}
\item[$\mathrm{2)}$]
  $\frac{\mathrm{d}\vec{e}_{\overrightarrow{A_1P}}}{\mathrm{d}\theta}=\frac{ b(a-c\cos\theta)\{-b\sin\theta,
  a \cos\theta-c \}}{[ (a\cos\theta-c)^2+b^2\sin^2\theta ]^{\frac{3}{2}}}$,
 $\vec{e}_{\overrightarrow{A_1P}}|_{\theta=0}=\{1,0\}$.
\end{enumerate}
Here $0\leq\theta\leq2\pi$. Then the trajectory of $P$ is (\ref{xxsecapl1}).
\end{theorem}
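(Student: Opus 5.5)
The plan is to mirror the proof of Theorem \ref{xxsec5.2}, using Theorem \ref{xxsec2.3} as a template with the origin moved to $A_1$. Write $\overrightarrow{A_1P}(\theta)=\{f_1(\theta),f_2(\theta)\}$. The first step is to integrate condition 2). Its right-hand side is exactly the derivative computed in Proposition \ref{xxsec5.4}(ii) for the unit vector
$$\vec{u}(\theta)=\frac{\{a\cos\theta-c,\,b\sin\theta\}}{\sqrt{(a\cos\theta-c)^2+b^2\sin^2\theta}},$$
and $\vec{u}(0)=\{1,0\}$ because $a-c>0$. So $\vec{e}_{\overrightarrow{A_1P}}$ and $\vec{u}$ have the same derivative and the same initial value, and uniqueness of solutions of $\vec{w}'=\vec{F}(\theta)$ with given initial data (simple integration) gives $\vec{e}_{\overrightarrow{A_1P}}=\vec{u}$ on $[0,2\pi]$. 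Hence
$$f_1=\lambda(\theta)(a\cos\theta-c),\qquad f_2=\lambda(\theta)\,b\sin\theta,$$
with $\lambda(\theta)>0$. The denominator never vanishes, since $(a\cos\theta-c)^2+b^2\sin^2\theta=(a-c\cos\theta)^2\ge(a-c)^2>0$.

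The second step is to integrate condition 1). By Proposition \ref{xxsec5.4}(i) and formula (\ref{xxsecapl8}), the function $\xi_1(\theta)=\sqrt{(a\cos\theta-c)^2+b^2\sin^2\theta}$ satisfies the same second-order equation. By (\ref{xxsecapl7}) and Table 5.1 it also satisfies $\xi_1(0)=a-c$ and $\xi_1'(0)=0$. Since $\mathcal{D}''=\xi_1''$ holds as an identity in $\theta$ (the right-hand side does not involve $\mathcal{D}$), integrating twice with the matching initial data gives $\mathcal{D}\equiv\xi_1$. Thus $f_1^2+f_2^2=(a\cos\theta-c)^2+b^2\sin^2\theta$.

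The third step combines the two. Substituting the first step into the second gives $\lambda^2(\theta)\big[(a\cos\theta-c)^2+b^2\sin^2\theta\big]=(a\cos\theta-c)^2+b^2\sin^2\theta$. The bracket is nonzero, so $\lambda^2=1$, and positivity forces $\lambda=1$. Therefore $P=A_1+\overrightarrow{A_1P}=(a\cos\theta,b\sin\theta)$, which is (\ref{xxsecapl1}).

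The only point needing care is the first step, where condition 2) must be identified with the derivative of an explicit unit vector. That identification is already supplied by Proposition \ref{xxsec5.4}(ii), so I expect no real obstacle. A minor point is to observe that the relation $a-c\cos\theta>0$ keeps every expression well defined on the whole interval.
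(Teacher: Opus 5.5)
Your proof is correct and takes the same route as the paper, whose proof of this theorem defers to that of Theorem \ref{xxsec5.2}: condition 2) gives $\overrightarrow{A_1P}=\lambda(\theta)\{a\cos\theta-c,\,b\sin\theta\}$ with $\lambda>0$, condition 1) gives $|\overrightarrow{A_1P}|^2=(a\cos\theta-c)^2+b^2\sin^2\theta$, and together these force $\lambda=1$. You also justify explicitly what the paper leaves implicit, namely the two integrations from the initial data and the nonvanishing of $(a\cos\theta-c)^2+b^2\sin^2\theta=(a-c\cos\theta)^2$.
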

\begin{proof}
The proof closely follows that of Theorem  \ref{xxsec5.2}.
\end{proof}

\textbf{Remark.}  We just discussed the behaviour of the moving points on ellipses with frame (\ref{xxsecpla2}) and  (\ref{xxsecpla9}).
In fact, we can also use the local rotating frame  (\ref{xxsecpla12}).
At this point, it can be known from  formula (\ref{xxsecpla21}) and Proposition \ref{xxsec2.6} (ii) that the local second-order derivative and rotational speed function are $\frac{c^2\sin\theta \cos\theta   }{\sqrt{a^2\sin^2\theta+b^2\cos^2\theta}}$ and
$\frac{ab}{2(a^2\sin^2\theta+ b^2\cos^2\theta)}$, respectively.
Hence, it can be said that we have obtained three different approaches to forming an ellipse.  Their commonality is that  all three have a binary property (vibration and rotation).  Finally, it should be noted that this new binary mathematical  formation mechanism of an ellipse based on vibration and rotation do not conflict with those classic mechanisms (for example, differentiate  (\ref{xxsecapl1}) directly with respect to $\theta$ twice), rather, they supplement them. All of these tell us that an ellipse can be formed through a variety of mechanisms.

\appendix{}

\section{Some complicated computations involved in $\S$ 4.2.}
\label{Appendix A}
This section is devoted to the computations of  (\ref{xxsecsur12}), (\ref{xxsecsur15})-(\ref{xxsecsur17})
in $\S$ 4.2. These subtle and complicated computations are fairly necessary for  our main result in this subsection. To make our exposition self-contained, we compile them into this appendix for the reader to follow our argument.

The calculation of (\ref{xxsecsur12}):
\begin{align}
&\overrightarrow{r}'''(u(t),v(t))\notag\\
=&
\sum_{i,j,k,l}(\Gamma_{ij}^k)_l\frac{\mathrm{d}u^l}{\mathrm{d}t}\frac{\mathrm{d}u^i}{\mathrm{d}t}\frac{\mathrm{d}u^j}{\mathrm{d}t}\overrightarrow{r}_k
+\sum_{i,j,k}\Gamma_{ij}^k\frac{\mathrm{d}^2u^i}{\mathrm{d}t^2}\frac{\mathrm{d}u^j}{\mathrm{d}t}\overrightarrow{r}_k
+\sum_{i,j,k}\Gamma_{ij}^k\frac{\mathrm{d}u^i}{\mathrm{d}t}\frac{\mathrm{d}^2u^j}{\mathrm{d}t^2}\overrightarrow{r}_k\notag\\
&+\sum_{i,j,k,l}\Gamma_{ij}^k\frac{\mathrm{d}u^i}{\mathrm{d}t}\frac{\mathrm{d}u^j}{\mathrm{d}t}\overrightarrow{r}_{kl}\frac{\mathrm{d}u^l}{\mathrm{d}t}
+\sum_{i,j,k}(L_{ij})_k\frac{\mathrm{d}u^k}{\mathrm{d}t}\frac{\mathrm{d}u^i}{\mathrm{d}t}\frac{\mathrm{d}u^j}{\mathrm{d}t}\overrightarrow{n}\notag\\
&+\sum_{i,j}L_{ij}\frac{\mathrm{d}^2u^i}{\mathrm{d}t^2}\frac{\mathrm{d}u^j}{\mathrm{d}t}\overrightarrow{n}+\sum_{i,j}L_{ij}\frac{\mathrm{d}u^i}{\mathrm{d}t}\frac{\mathrm{d}^2u^j}{\mathrm{d}t^2}\overrightarrow{n}
+\sum_{i,j,k}L_{ij}\frac{\mathrm{d}u^i}{\mathrm{d}t}\frac{\mathrm{d}u^j}{\mathrm{d}t}\overrightarrow{n}_k\frac{\mathrm{d}u^k}{\mathrm{d}t}\notag\\
&+\sum_{i,j}\frac{\mathrm{d}^2u^i}{\mathrm{d}t^2}\overrightarrow{r}_{ij}\frac{\mathrm{d}u^j}{\mathrm{d}t}+\sum_i\frac{\mathrm{d}^3u^i}{\mathrm{d}t^3}\overrightarrow{r}_i\notag\\
=&
\sum_{i,j,k,l}(\Gamma_{ij}^k)_l\frac{\mathrm{d}u^i}{\mathrm{d}t}\frac{\mathrm{d}u^j}{\mathrm{d}t}\frac{\mathrm{d}u^l}{\mathrm{d}t}\overrightarrow{r}_k
+\sum_{i,j,k}\Gamma_{ij}^k\frac{\mathrm{d}^2u^i}{\mathrm{d}t^2}\frac{\mathrm{d}u^j}{\mathrm{d}t}\overrightarrow{r}_k
+\sum_{i,j,k}\Gamma_{ij}^k\frac{\mathrm{d}u^i}{\mathrm{d}t}\frac{\mathrm{d}^2u^j}{\mathrm{d}t^2}\overrightarrow{r}_k\notag\\
&+\sum_{i,j,k,l,m}\Gamma_{ij}^k\frac{\mathrm{d}u^i}{\mathrm{d}t}\frac{\mathrm{d}u^j}{\mathrm{d}t}\Gamma_{kl}^m\overrightarrow{r}_{m}\frac{\mathrm{d}u^l}{\mathrm{d}t}
+\sum_{i,j,k,l}\Gamma_{ij}^k\frac{\mathrm{d}u^i}{\mathrm{d}t}\frac{\mathrm{d}u^j}{\mathrm{d}t}L_{kl}\overrightarrow{n}\frac{\mathrm{d}u^l}{\mathrm{d}t}\notag\\
&+\sum_{i,j,k}(L_{ij})_k\frac{\mathrm{d}u^i}{\mathrm{d}t}\frac{\mathrm{d}u^j}{\mathrm{d}t}\frac{\mathrm{d}u^k}{\mathrm{d}t}\overrightarrow{n}
+\sum_{i,j}L_{ij}\frac{\mathrm{d}^2u^i}{\mathrm{d}t^2}\frac{\mathrm{d}u^j}{\mathrm{d}t}\overrightarrow{n}+\sum_{i,j}L_{ij}\frac{\mathrm{d}u^i}{\mathrm{d}t}\frac{\mathrm{d}^2u^j}{\mathrm{d}t^2}\overrightarrow{n}\notag\\
&-\sum_{i,j,k,l,m}L_{ij}\frac{\mathrm{d}u^i}{\mathrm{d}t}\frac{\mathrm{d}u^j}{\mathrm{d}t}L_{km}g^{ml}\overrightarrow{r}_{l}\frac{\mathrm{d}u^k}{\mathrm{d}t}\notag\\
&+\sum_{i,j,k}\frac{\mathrm{d}^2u^i}{\mathrm{d}t^2}\Gamma_{ij}^k\overrightarrow{r}_{k}\frac{\mathrm{d}u^j}{\mathrm{d}t}
+\sum_{i,j}\frac{\mathrm{d}^2u^i}{\mathrm{d}t^2}L_{ij}\overrightarrow{n}\frac{\mathrm{d}u^j}{\mathrm{d}t}
+\sum_i\frac{\mathrm{d}^3u^i}{\mathrm{d}t^3}\overrightarrow{r}_i\notag\\
=&
\sum_{i,j,k,l}(\Gamma_{ij}^k)_l\frac{\mathrm{d}u^i}{\mathrm{d}t}\frac{\mathrm{d}u^j}{\mathrm{d}t}\frac{\mathrm{d}u^l}{\mathrm{d}t}\overrightarrow{r}_k
+\sum_{i,j,k}\Gamma_{ij}^k\frac{\mathrm{d}^2u^i}{\mathrm{d}t^2}\frac{\mathrm{d}u^j}{\mathrm{d}t}\overrightarrow{r}_k
+\sum_{i,j,k}\Gamma_{ij}^k\frac{\mathrm{d}u^i}{\mathrm{d}t}\frac{\mathrm{d}^2u^j}{\mathrm{d}t^2}\overrightarrow{r}_k\notag\\
&+\sum_{i,j,k,l,m}\Gamma_{ij}^k\Gamma_{kl}^m\frac{\mathrm{d}u^i}{\mathrm{d}t}\frac{\mathrm{d}u^j}{\mathrm{d}t}\frac{\mathrm{d}u^l}{\mathrm{d}t}\overrightarrow{r}_{m}
+\sum_{i,j,k,l}\Gamma_{ij}^kL_{kl}\frac{\mathrm{d}u^i}{\mathrm{d}t}\frac{\mathrm{d}u^j}{\mathrm{d}t}\frac{\mathrm{d}u^l}{\mathrm{d}t}\overrightarrow{n}\notag\\
&+\sum_{i,j,k}(L_{ij})_k\frac{\mathrm{d}u^i}{\mathrm{d}t}\frac{\mathrm{d}u^j}{\mathrm{d}t}\frac{\mathrm{d}u^k}{\mathrm{d}t}\overrightarrow{n}
+\sum_{i,j}L_{ij}\frac{\mathrm{d}^2u^i}{\mathrm{d}t^2}\frac{\mathrm{d}u^j}{\mathrm{d}t}\overrightarrow{n}+\sum_{i,j}L_{ij}\frac{\mathrm{d}u^i}{\mathrm{d}t}\frac{\mathrm{d}^2u^j}{\mathrm{d}t^2}\overrightarrow{n}\notag\\
&-\sum_{i,j,k,l,m}L_{ij}L_{km}g^{ml}\frac{\mathrm{d}u^i}{\mathrm{d}t}\frac{\mathrm{d}u^j}{\mathrm{d}t}\frac{\mathrm{d}u^k}{\mathrm{d}t}\overrightarrow{r}_{l}\notag\\
&+\sum_{i,j,k}\Gamma_{ij}^k\frac{\mathrm{d}^2u^i}{\mathrm{d}t^2}\frac{\mathrm{d}u^j}{\mathrm{d}t}\overrightarrow{r}_{k}
+\sum_{i,j}L_{ij}\frac{\mathrm{d}^2u^i}{\mathrm{d}t^2}\frac{\mathrm{d}u^j}{\mathrm{d}t}\overrightarrow{n}
+\sum_i\frac{\mathrm{d}^3u^i}{\mathrm{d}t^3}\overrightarrow{r}_i\notag\\
=&
\sum_{i,j,k,l}(\Gamma_{ij}^k)_l\frac{\mathrm{d}u^i}{\mathrm{d}t}\frac{\mathrm{d}u^j}{\mathrm{d}t}\frac{\mathrm{d}u^l}{\mathrm{d}t}\overrightarrow{r}_k
+\sum_{i,j,k}\Gamma_{ij}^k\frac{\mathrm{d}^2u^i}{\mathrm{d}t^2}\frac{\mathrm{d}u^j}{\mathrm{d}t}\overrightarrow{r}_k
+\sum_{i,j,k}\Gamma_{ij}^k\frac{\mathrm{d}u^i}{\mathrm{d}t}\frac{\mathrm{d}^2u^j}{\mathrm{d}t^2}\overrightarrow{r}_k\notag\\
&+\sum_{i,j,k,l,m}\Gamma_{ij}^m\Gamma_{ml}^k\frac{\mathrm{d}u^i}{\mathrm{d}t}\frac{\mathrm{d}u^j}{\mathrm{d}t}\frac{\mathrm{d}u^l}{\mathrm{d}t}\overrightarrow{r}_{k}
+\sum_{i,j,k,l}\Gamma_{ij}^kL_{kl}\frac{\mathrm{d}u^i}{\mathrm{d}t}\frac{\mathrm{d}u^j}{\mathrm{d}t}\frac{\mathrm{d}u^l}{\mathrm{d}t}\overrightarrow{n}\notag\\
&+\sum_{i,j,k}(L_{ij})_k\frac{\mathrm{d}u^i}{\mathrm{d}t}\frac{\mathrm{d}u^j}{\mathrm{d}t}\frac{\mathrm{d}u^k}{\mathrm{d}t}\overrightarrow{n}
+\sum_{i,j}L_{ij}\frac{\mathrm{d}^2u^i}{\mathrm{d}t^2}\frac{\mathrm{d}u^j}{\mathrm{d}t}\overrightarrow{n}+\sum_{i,j}L_{ij}\frac{\mathrm{d}u^i}{\mathrm{d}t}\frac{\mathrm{d}^2u^j}{\mathrm{d}t^2}\overrightarrow{n}\notag\\
&-\sum_{i,j,k,l,m}L_{ij}L_{lm}g^{mk}\frac{\mathrm{d}u^i}{\mathrm{d}t}\frac{\mathrm{d}u^j}{\mathrm{d}t}
\frac{\mathrm{d}u^l}{\mathrm{d}t}\overrightarrow{r}_{k}\notag\\
&+\sum_{i,j,k}\Gamma_{ij}^k\frac{\mathrm{d}^2u^i}{\mathrm{d}t^2}\frac{\mathrm{d}u^j}{\mathrm{d}t}\overrightarrow{r}_{k}
+\sum_{i,j}L_{ij}\frac{\mathrm{d}^2u^i}{\mathrm{d}t^2}\frac{\mathrm{d}u^j}{\mathrm{d}t}\overrightarrow{n}
+\sum_k\frac{\mathrm{d}^3u^k}{\mathrm{d}t^3}\overrightarrow{r}_k\notag\\
=&
\sum_{i,j,k,l}(\Gamma_{ij}^k)_l\frac{\mathrm{d}u^i}{\mathrm{d}t}\frac{\mathrm{d}u^j}{\mathrm{d}t}\frac{\mathrm{d}u^l}{\mathrm{d}t}\overrightarrow{r}_k
+2\sum_{i,j,k}\Gamma_{ij}^k\frac{\mathrm{d}^2u^i}{\mathrm{d}t^2}\frac{\mathrm{d}u^j}{\mathrm{d}t}\overrightarrow{r}_k
+\sum_{i,j,k}\Gamma_{ij}^k\frac{\mathrm{d}u^i}{\mathrm{d}t}\frac{\mathrm{d}^2u^j}{\mathrm{d}t^2}\overrightarrow{r}_k\notag\\
&+\sum_{i,j,k,l,m}\Gamma_{ij}^m\Gamma_{ml}^k\frac{\mathrm{d}u^i}{\mathrm{d}t}\frac{\mathrm{d}u^j}{\mathrm{d}t}\frac{\mathrm{d}u^l}{\mathrm{d}t}\overrightarrow{r}_{k}
-\sum_{i,j,k,l,m}L_{ij}L_{lm}g^{mk}\frac{\mathrm{d}u^i}{\mathrm{d}t}\frac{\mathrm{d}u^j}{\mathrm{d}t}
\frac{\mathrm{d}u^l}{\mathrm{d}t}\overrightarrow{r}_{k}\notag\\
&
+\sum_k\frac{\mathrm{d}^3u^k}{\mathrm{d}t^3}\overrightarrow{r}_k+\sum_{i,j,k,l}\Gamma_{ij}^kL_{kl}\frac{\mathrm{d}u^i}{\mathrm{d}t}\frac{\mathrm{d}u^j}{\mathrm{d}t}\frac{\mathrm{d}u^l}{\mathrm{d}t}\overrightarrow{n}+\sum_{i,j,k}(L_{ij})_k\frac{\mathrm{d}u^i}{\mathrm{d}t}\frac{\mathrm{d}u^j}{\mathrm{d}t}\frac{\mathrm{d}u^k}{\mathrm{d}t}\overrightarrow{n}\notag\\
&+2\sum_{i,j}L_{ij}\frac{\mathrm{d}^2u^i}{\mathrm{d}t^2}\frac{\mathrm{d}u^j}{\mathrm{d}t}\overrightarrow{n}+\sum_{i,j}L_{ij}\frac{\mathrm{d}u^i}{\mathrm{d}t}\frac{\mathrm{d}^2u^j}{\mathrm{d}t^2}\overrightarrow{n}
.\notag
\end{align}

The calculation of (\ref{xxsecsur15}):


The calculation of (\ref{xxsecsur16}):
%

The calculation of (\ref{xxsecsur17}): This is almost a repetition of (\ref{xxsecsur16}), so we omit the computational process.

\section{The Proof of (3) in Proposition \ref{xxsec5.3}}
\label{Appendix B}

Proposition \ref{xxsec5.3} is an important result in this article. The computations involved in the proof of (3) in the proposition   are extremely tedious. So we complete it in this appendix.

\begin{proof} By invoking (\ref{xxsecapl8})  we obtain
\begin{align}
\frac{ \rm{d}^3\xi_1}{\rm{d}\theta^3}=& 3 [(a\cos\theta-c)^2+b^2\sin^2\theta ]^{-\frac{5}{2}} (    ac\sin\theta -c^2\sin \theta\cos\theta)^3\notag\\
& -2[(a\cos\theta-c)^2+b^2\sin^2\theta ]^{-\frac{3}{2}} (   ac\sin\theta -c^2\sin \theta\cos\theta)
(   ac\cos\theta -c^2\cos2\theta ) \notag\\
&-[(a\cos\theta-c)^2+b^2\sin^2\theta ]^{-\frac{3}{2}}(   ac\sin\theta -c^2\sin \theta\cos\theta)( ac\cos\theta- c^2\cos2\theta  )\notag\\
&+[(a\cos\theta-c)^2+b^2\sin^2\theta ]^{-\frac{1}{2}}(   -ac\sin\theta +2c^2\sin2 \theta)\notag\\
=& 3 [(a\cos\theta-c)^2+b^2\sin^2\theta ]^{-\frac{5}{2}} (    ac\sin\theta -c^2\sin \theta\cos\theta)^3\notag\\
& -3 [(a\cos\theta-c)^2+b^2\sin^2\theta ]^{-\frac{5}{2}}[(a\cos\theta-c)^2+b^2\sin^2\theta ] (   ac\sin\theta -c^2\sin \theta\cos\theta)\notag\\
&\times(   ac\cos\theta -c^2\cos2\theta ) \notag\\
&+[(a\cos\theta-c)^2+b^2\sin^2\theta ]^{-\frac{5}{2}}[(a\cos\theta-c)^2+b^2\sin^2\theta ]^2( 2c^2\sin2\theta- ac\sin\theta  ).
 \label{B1}
\end{align}
 In view of the relation (\ref{xxsecapl9}),  we get
\begin{align}
\frac{ \rm{d}^3\tilde{\xi}_1}{\rm{d}\theta^3}=& -3 [(a\cos\theta+c)^2+b^2\sin^2\theta ]^{-\frac{5}{2}} (    ac\sin\theta +c^2\sin \theta\cos\theta)^3\notag\\
& -  2[(a\cos\theta+c)^2+b^2\sin^2\theta ]^{-\frac{3}{2}} (  ac\sin\theta +c^2\sin \theta\cos\theta)(   ac\cos\theta +c^2\cos2\theta ) \notag\\
& +  [(a\cos\theta+c)^2+b^2\sin^2\theta ]^{-\frac{3}{2}} (  ac\sin\theta +c^2\sin \theta\cos\theta)(   -ac\cos\theta -c^2\cos2\theta ) \notag\\
&+[(a\cos\theta+c)^2+b^2\sin^2\theta ]^{-\frac{1}{2}}(ac\sin\theta+ 2c^2\sin2\theta)\notag\\
=& -3 [(a\cos\theta+c)^2+b^2\sin^2\theta ]^{-\frac{5}{2}} (    ac\sin\theta +c^2\sin \theta\cos\theta)^3\notag\\
& -3 [(a\cos\theta+c)^2+b^2\sin^2\theta ]^{-\frac{5}{2}}[(a\cos\theta+c)^2+b^2\sin^2\theta ] (   ac\sin\theta +c^2\sin \theta\cos\theta)\notag\\
&\times(   ac\cos\theta +c^2\cos2\theta ) \notag\\
&+[(a\cos\theta+c)^2+b^2\sin^2\theta ]^{-\frac{5}{2}}[(a\cos\theta+c)^2+b^2\sin^2\theta ]^2( 2c^2\sin2\theta+ ac\sin\theta  ).
 \label{B2}
\end{align}
 Before proceeding, one should note that $\frac{ \rm{d}^3\xi_1}{\rm{d}\theta^3}+\frac{ \rm{d}^3\tilde{\xi}_1}{\rm{d}\theta^3}=0$. If $\frac{ \rm{d}^3\xi_1}{\rm{d}\theta^3} \geq0$, $\theta\in (0,\pi)$,
then it follows from (\ref{B1}) and (\ref{B2}) that
 \begin{align}
& 3   (    ac\sin\theta -c^2\sin \theta\cos\theta)^3\notag\\
& -3  [(a\cos\theta-c)^2+b^2\sin^2\theta ] (   ac\sin\theta -c^2\sin \theta\cos\theta)
(   ac\cos\theta -c^2\cos2\theta ) \notag\\
&+ [(a\cos\theta-c)^2+b^2\sin^2\theta ]^2( 2c^2\sin2\theta- ac\sin\theta  )\geq0,\label{B3}\\
 &-3   (    ac\sin\theta +c^2\sin \theta\cos\theta)^3\notag\\
& -3  [( a\cos\theta+c)^2+b^2\sin^2\theta ] (   ac\sin\theta +c^2\sin \theta\cos\theta)
(   ac\cos\theta +c^2\cos2\theta ) \notag\\
&+ [( a\cos\theta+c)^2+b^2\sin^2\theta ]^2(  2c^2\sin2\theta+ ac\sin\theta  )\leq0, \label{B4}
\end{align}
where $\theta\in (0,\pi)$. Subtracting (\ref{B4}) by (\ref{B3}) leads to
\begin{align}
&-3   (    ac\sin\theta +c^2\sin \theta\cos\theta)^3- 3   (    ac\sin\theta -c^2\sin \theta\cos\theta)^3\notag\\
& -3  [( a\cos\theta+c)^2+b^2\sin^2\theta ] (   ac\sin\theta +c^2\sin \theta\cos\theta)
(    ac\cos\theta +c^2\cos2\theta ) \notag\\
& + 3  [(a\cos\theta-c)^2+b^2\sin^2\theta ] (   ac\sin\theta -c^2\sin \theta\cos\theta)
(   ac\cos\theta -c^2\cos2\theta ) \notag\\
&+ [( a\cos\theta+c)^2+b^2\sin^2\theta ]^2(  2c^2\sin2\theta+ ac\sin\theta  )\notag\\
&- [(a\cos\theta-c)^2+b^2\sin^2\theta ]^2( 2c^2\sin2\theta- ac\sin\theta  )   \leq0, \notag
\end{align}
where $\theta\in(0,\pi)$. Expanding the above relation, we arrive at
\begin{align}
&-3   c^3\sin^3\theta(    a^3 +3 a^2 c  \cos\theta + 3a   c^2  \cos^2\theta+c ^3 \cos^3\theta)\notag\\& - 3 c^3\sin^3\theta  (    a^3 -3 a^2 c  \cos\theta + 3a   c^2  \cos^2\theta-c ^3 \cos^3\theta) \notag\\
& -3  (  a^2\cos^2\theta+2ac\cos\theta+c^2+b^2\sin^2\theta  )\notag\\&\ \ \ \times(   a^2c^2\sin\theta\cos\theta +ac^3\sin\theta\cos2\theta+ac^3\sin \theta\cos^2\theta   +c^4\sin \theta\cos\theta\cos2\theta)\notag\\
& + 3  (  a^2\cos^2\theta-2ac\cos\theta+c^2+b^2\sin^2\theta  ) \notag\\&\ \ \ \times(   a^2c^2\sin\theta\cos\theta -ac^3\sin\theta\cos2\theta-ac^3\sin \theta\cos^2\theta   +c^4\sin \theta\cos\theta\cos2\theta)\notag\\
&+ [  (a^2\cos^2\theta+c^2+b^2\sin^2\theta)^2+4ac\cos\theta(a^2\cos^2\theta+c^2+b^2\sin^2\theta)+4a^2c^2\cos^2\theta ] \notag\\&\ \ \ \times(  2c^2\sin2\theta+ ac\sin\theta  )\notag\\
&- [  (a^2\cos^2\theta+c^2+b^2\sin^2\theta)^2-4ac\cos\theta(a^2\cos^2\theta+c^2+b^2\sin^2\theta)+4a^2c^2\cos^2\theta ]
\notag\\&\ \ \ \times( 2c^2\sin2\theta- ac\sin\theta  )   \leq0, \notag
\end{align}
where $\theta\in (0,\pi)$. Rewrite this as
\begin{align}
&-3   c^3\sin^3\theta(    a^3 +3 a^2 c  \cos\theta + 3a   c^2  \cos^2\theta+c ^3 \cos^3\theta) \notag\\&- 3 c^3\sin^3\theta  (    a^3 -3 a^2 c  \cos\theta + 3a   c^2  \cos^2\theta-c ^3 \cos^3\theta) \notag\\
&-3  ( a^2\cos^2\theta +c^2+b^2\sin^2\theta )\notag\\ &\ \ \ \times (   a^2c^2\sin\theta\cos\theta +ac^3\sin\theta\cos2\theta+ac^3\sin \theta\cos^2\theta   +c^4\sin \theta\cos\theta\cos2\theta)\notag\\
& + 3  ( a^2\cos^2\theta +c^2+b^2\sin^2\theta )\notag\\&\ \ \  \times (   a^2c^2\sin\theta\cos\theta-ac^3\sin\theta\cos2\theta -ac^3\sin \theta\cos^2\theta   +c^4\sin \theta\cos\theta\cos2\theta)\notag\\
&-3 (2ac\cos\theta  ) (   a^2c^2\sin\theta\cos\theta +ac^3\sin\theta\cos2\theta+ac^3\sin \theta\cos^2\theta   +c^4\sin \theta\cos\theta\cos2\theta)\notag\\
& + 3  (  -2ac\cos\theta ) (   a^2c^2\sin\theta\cos\theta -ac^3\sin\theta\cos2\theta -ac^3\sin \theta\cos^2\theta  +c^4\sin \theta\cos\theta\cos2\theta)\notag\\
&+ [  (a^2\cos^2\theta+c^2+b^2\sin^2\theta)^2+4ac\cos\theta(a^2\cos^2\theta+c^2+b^2\sin^2\theta)\notag\\&\ \ \ +4a^2c^2\cos^2\theta ] (  2c^2\sin2\theta  )\notag\\
&- [  (a^2\cos^2\theta+c^2+b^2\sin^2\theta)^2-4ac\cos\theta(a^2\cos^2\theta+c^2+b^2\sin^2\theta)\notag\\&\ \ \ +4a^2c^2\cos^2\theta ]( 2c^2\sin2\theta   )  \notag\\
&+[  (a^2\cos^2\theta+c^2+b^2\sin^2\theta)^2+4ac\cos\theta(a^2\cos^2\theta+c^2+b^2\sin^2\theta)\notag\\&\ \ \ +4a^2c^2\cos^2\theta ]
(   ac\sin\theta  )\notag\\
&- [  (a^2\cos^2\theta+c^2+b^2\sin^2\theta)^2-4ac\cos\theta(a^2\cos^2\theta+c^2+b^2\sin^2\theta)\notag\\&\ \ \ +4a^2c^2\cos^2\theta ]
(  - ac\sin\theta  )   \leq0, \notag
\end{align}
where $\theta\in(0,\pi)$.   Simplifying the above inequality gives
\begin{align}
&-6   c^3\sin^3\theta(    a^3  + 3a   c^2  \cos^2\theta) \notag\\
& - 6  ( a^2\cos^2\theta +c^2+b^2\sin^2\theta ) (ac^3\sin\theta\cos2\theta +  ac^3\sin \theta\cos^2\theta)\notag\\
& + 6  (  -2ac\cos\theta ) (     a^2c^2\sin\theta\cos\theta  +c^4\sin \theta\cos\theta\cos2\theta )\notag\\
&+ 2[ 4ac\cos\theta(a^2\cos^2\theta+c^2+b^2\sin^2\theta)] (  2c^2\sin2\theta  )\notag\\
&+ 2[   (a^2\cos^2\theta+c^2+b^2\sin^2\theta)^2+4a^2c^2\cos^2\theta] (   ac\sin\theta  )
  \leq0, \notag
\end{align}
that is,
\begin{align}
&2ac\sin\theta[-3   c^2\sin^2\theta(    a^2  + 3    c^2  \cos^2\theta) \notag\\
&\ \ \ \ \ \ \ \ \ \ \ \  - 3  ( a^2\cos^2\theta +c^2+b^2\sin^2\theta ) (   c^2\cos2\theta +c^2 \cos^2\theta)\notag\\
&\ \ \ \ \ \ \ \ \ \ \ \  + 3  (  -2 \cos\theta ) (     a^2c^2 \cos\theta  +c^4 \cos\theta\cos2\theta )\notag\\
&\ \ \ \ \ \ \ \ \ \ \ \  + 4 \cos\theta(a^2\cos^2\theta+c^2+b^2\sin^2\theta)  (  4c^2\cos\theta  )\notag\\
&\ \ \ \ \ \ \ \ \  \ \ \  + (a^2\cos^2\theta+c^2+b^2\sin^2\theta)^2+4a^2c^2\cos^2\theta]
  \leq0,   \label{B5}
\end{align}
where $\theta\in(0,\pi)$.    Next let us do the following computation:\begin{align}
&-3   c^2\sin^2\theta(    a^2  + 3    c^2  \cos^2\theta) \notag\\
& - 3  ( a^2\cos^2\theta +c^2+b^2\sin^2\theta ) (  c^2\cos2\theta + c^2 \cos^2\theta)\notag\\
& + 3  (  -2 \cos\theta ) (     a^2c^2 \cos\theta  +c^4 \cos\theta\cos2\theta )\notag\\
&+    4 \cos\theta(a^2\cos^2\theta+c^2+b^2\sin^2\theta)  (  4c^2\cos\theta  )\notag\\
&+      (a^2\cos^2\theta+c^2+b^2\sin^2\theta)^2+4a^2c^2\cos^2\theta\notag\\
=&-3   c^2\sin^2\theta(    a^2  + 3    c^2  \cos^2\theta) \notag\\
& - 3 ( c^2\cos^2\theta+a^2) (  c^2\cos2\theta+  c^2 \cos^2\theta)\notag\\
& -6 \cos\theta  (     a^2c^2 \cos\theta  +c^4 \cos\theta\cos2\theta )\notag\\
&+  16c^2\cos^2\theta (c^2\cos^2\theta+a^2)  \notag\\
&+    (c^2\cos^2\theta+a^2)^2+4a^2c^2\cos^2\theta\notag\\
=&-3      a^2c^2\sin^2\theta -9    c^4\sin^2\theta  \cos^2\theta \notag\\
&- 3 c^4\cos^2\theta \cos2\theta - 3      c^4 \cos^4\theta  -3a^2c^2\cos2\theta- 3   a^2    c^2 \cos^2\theta \notag\\
& -6      a^2c^2 \cos^2\theta  -6   c^4 \cos^2\theta\cos2\theta  \notag\\
&+  16c^4\cos^4\theta +16 a^2c^2\cos^2\theta  \notag\\
&+     c^4\cos^4\theta+a^4 +2 a^2 c^2\cos^2\theta+4a^2c^2\cos^2\theta
\notag\\
=&-3      a^2c^2\sin^2\theta -9    c^4\sin^2\theta  \cos^2\theta \notag\\
& - 3 c^4\cos^2\theta (\cos^2\theta-\sin^2\theta)- 3      c^4 \cos^4\theta -3a^2c^2 (\cos^2\theta-\sin^2\theta) - 3   a^2    c^2 \cos^2\theta   \notag\\
& -6      a^2c^2 \cos^2\theta  -6   c^4 \cos^2\theta (\cos^2\theta-\sin^2\theta) \notag\\
&+  16c^4\cos^4\theta +16 a^2c^2\cos^2\theta  \notag\\
&+     c^4\cos^4\theta+a^4 +6a^2c^2\cos^2\theta
 \notag\\
=&-3      a^2c^2\sin^2\theta -9    c^4\sin^2\theta  \cos^2\theta \notag\\
& -6 c^4\cos^4\theta  + 3 c^4\cos^2\theta\sin^2\theta  - 6a^2c^2 \cos^2\theta+ 3a^2c^2 \sin^2\theta  \notag\\
& -6      a^2c^2 \cos^2\theta  -6   c^4 \cos^4\theta  +6   c^4 \cos^2\theta\sin^2\theta  \notag\\
&+  16c^4\cos^4\theta +16 a^2c^2\cos^2\theta  \notag\\
&+     c^4\cos^4\theta+a^4 +6a^2c^2\cos^2\theta\notag\\
=&-3      a^2c^2\sin^2\theta -9    c^4\sin^2\theta  \cos^2\theta \notag\\
&  +3a^2c^2 \sin^2\theta + 3 c^4\cos^2\theta\sin^2\theta -6 c^4\cos^4\theta - 6a^2c^2 \cos^2\theta \notag\\
&+6   c^4 \cos^2\theta\sin^2\theta -6   c^4 \cos^4\theta-6      a^2c^2 \cos^2\theta  \notag\\
&+  16c^4\cos^4\theta +16 a^2c^2\cos^2\theta  \notag\\
&+     c^4\cos^4\theta +6a^2c^2\cos^2\theta+a^4\notag\\
=&   -6 c^4\cos^4\theta - 6a^2c^2 \cos^2\theta \notag\\
& -6   c^4 \cos^4\theta-6      a^2c^2 \cos^2\theta  \notag\\
&+  16c^4\cos^4\theta +16 a^2c^2\cos^2\theta  \notag\\
&+     c^4\cos^4\theta +6a^2c^2\cos^2\theta+a^4\notag\\
=&5c^4\cos^4\theta +10 a^2c^2\cos^2\theta  +a^4 . \notag
\end{align}
Taking the above result into (\ref{B5}), we assert that
$$2ac\sin\theta(5c^4\cos^4\theta +10 a^2c^2\cos^2\theta  +a^4   )\leq0,$$where $\theta\in(0,\pi)$. A contradiction. This implies $\frac{ \rm{d}^3\xi_1}{\rm{d}\theta^3} < 0$, $\theta\in (0,\pi)$. By an analogous manner, we can show that $\frac{ \rm{d}^3\xi_1}{\rm{d}\theta^3} > 0$, $\theta\in (\pi,2\pi)$.

\end{proof}

\end{document}